\newcolumntype{H}{>{\setbox0=\hbox\bgroup}c<{\egroup}@{}}
\newtheorem{theorem}{Theorem}[section]
\newtheorem{lemma}[theorem]{Lemma}
\newtheorem{corollary}[theorem]{Corollary}
\newtheorem{proposition}[theorem]{Proposition}
\theoremstyle{definition}
\newtheorem{definition}[theorem]{Definition}
\newtheorem{notation}[theorem]{Notation}
\newtheorem{setup}[theorem]{Setup}
\newtheorem{remark}[theorem]{Remark}
\newtheorem{construction}{Construction}
\title[Mori dream spaces arising from $\mathbb{C}^*$-actions]{Small modifications of Mori dream spaces\\arising from $\mathbb{C}^*$-actions}
\author[Occhetta]{Gianluca Occhetta}
\address{Dipartimento di Matematica, Universit\`a degli Studi di Trento, via
Sommarive 14 I-38123 Povo di Trento (TN), Italy}
\email{gianluca.occhetta@unitn.it, eduardo.solaconde@unitn.it}
\author[Romano]{Eleonora A. Romano}
\address{Dipartimento di Matematica, Universit\`a degli Studi di Genova, via Dodecaneso 35, I-16146, Genova (GE), Italy}
\email{eleonoraanna.romano@unige.it}
\author[Sol\'a Conde]{Luis E. Sol\'a Conde}
\author[Wi\'sniewski]{Jaros\l{}aw A. Wi\'sniewski}
\address{Instytut Matematyki UW, Banacha 2, 02-097 Warszawa, Poland}
\email{J.Wisniewski@uw.edu.pl}
\subjclass[2010]{Primary 14L30; Secondary 14E30, 14L24, 14M17}
\thanks{First and third author supported by PRIN project ``Geometria delle variet\`a algebriche''. Second and fourth author supported by Polish National Science Center project 2016/23/G/ST1/04282. Second author supported by Associazione Amici di Claudio Dematt\`e.}
\newcommand\ignore[1]{}
\DeclareMathOperator{\HH}{H}
\newcommand\CC{{\mathbb{C}}}
\newcommand\PP{{\mathbb{P}}}
\newcommand\QQ{{\mathbb{Q}}}
\def\B{{\mathbb B}}
\def\C{{\mathbb C}}
\def\N{{\mathbb N}}
\def\P{{\mathbb P}}
\def\Q{{\mathbb Q}}
\def\R{{\mathbb R}}
\def\Z{{\mathbb Z}}
\def\cB{{\mathcal B}}
\def\cD{{\mathcal D}}
\def\cL{{\mathcal L}}
\def\cM{{\mathcal M}}
\def\cN{{\mathcal{N}}}
\def\cO{{\mathcal{O}}}
\def\cS{{\mathcal S}}
\def\cY{{\mathcal Y}}
\def\Q{{\mathbb{Q}}}
\def\fg{{\mathfrak g}}
\def\fh{{\mathfrak h}}
\def\operatorname#1{\mathop{\rm #1}\nolimits}
\def\DA{{\rm A}}
\def\DB{{\rm B}}
\def\DC{{\rm C}}
\def\DD{{\rm D}}
\def\DE{{\rm E}}
\def\DF{{\rm F}}
\def\DG{{\rm G}}
\def\Proj{\operatorname{Proj}}
\def\Hom{\operatorname{Hom}}
\def\Pic{\operatorname{Pic}}
\def\Hom{\operatorname{Hom}}
\def\id{\operatorname{id}}
\def\rk{\operatorname{rk}}
\def\Bs{\operatorname{Bs}}
\def\Bss{\mathcal{B}}
\def\gen{\operatorname{gen}}
\def\Nef{{\operatorname{Nef}}}
\def\NU{{\operatorname{N^1}}}
\def\Mov{{\operatorname{Mov}}}
\newcommand{\cMov}[1]{\overline{\Mov(#1)}}
\def\ad{\operatorname{ad}}
\def\SX{\mathcal{S}\!X}
\def\GX{\mathcal{G}\!X}
\renewcommand{\ss}{\operatorname{ss}}
\newcommand{\pb}{\ar@{}[dr]|{\text{\pigpenfont J}}}
\newcommand{\shse}[3]{0 ~\ra ~#1~ \lra ~#2~ \lra ~#3~ \ra~ 0}
\newcommand{\xleftrightarrow}[2][]{\ext@arrow 3359\leftrightarrowfill@{#1}{#2}}
\newcommand{\xdasharrow}[2][->]{
\tikz[baseline=-\the\dimexpr\fontdimen22\textfont2\relax]{
\node[anchor=south,font=\scriptsize, inner ysep=1.5pt,outer xsep=2.2pt](x){#2};
\draw[shorten <=3.4pt,shorten >=3.4pt,dashed,#1](x.south west)--(x.south east);
}}
\newcommand\m{{\mathfrak m}}
\newcommand\ra{{\ \rightarrow\ }}
\newcommand\lra{\longrightarrow}
\def\Mo{\operatorname{\hspace{0cm}M}}
\begin{document}
\begin{abstract}
We link small modifications of projective varieties with a $\CC^*$-action to their GIT quotients. Namely, using flips with centers in closures of Bia{\l}ynicki-Birula cells, we produce a system of birational equivariant modifications of the original variety, which includes those on which a quotient map extends from a set of semistable points to a regular morphism. 

The structure of the modifications is completely described for the blowup along the sink and the source of smooth varieties with Picard number one with a $\CC^*$-action which has no finite isotropy for any point. Examples can be constructed upon   homogeneous varieties with a $\CC^*$-action associated to short grading of their Lie algebras.
\end{abstract}
\maketitle
\tableofcontents

%!TEX root = WORS3.tex

\section{Introduction}\label{sec:intro}

In classical Mumford's Geometric Invariant Theory (GIT), \cite{MFK}, quotients of projective varieties by reductive group actions are determined by the choice of a linearization of an ample line bundle which yields the set of semistable points, an open subset of the original variety on which the quotient is defined as a morphism; thus GIT requires passing to quasiprojective varieties. 

However, if the group in question is $\CC^*$ the complement of the set of semistable points is a union of closures of Bia{\l}ynicki-Birula cells, whose structure is well understood, see \cite{BBS}.
One may then ask if there is a natural equivariant compactification of the set of semistable points on which the quotient map extends to a regular morphism. This requires a good understanding of the equivariant birational modifications of varieties with a $\CC^*$-action.

The relation of birational geometry with the theory of quotients of $\CC^*$-actions has been acknowledged since the early days of the Minimal Model Program or Mori Theory; see the contributions by Thaddeus and Reid \cite{Thaddeus1996}, \cite{ReidFlip} (see also \cite[Remark 11.1.2, p.~46]{BBC}) which focused on describing  birational transformations in terms of variation of stability conditions yielding geometric quotients. This concept gave rise to the notions of Mori Dream Space (MDS) introduced in \cite{HuKeel}, and total coordinate -- or Cox -- ring (see \cite{ADHL}), whose spectrum gives, as GIT quotients, small $\QQ$-factorial modifications of an MDS.

W{\l}odarczyk in  \cite{Wlodarczyk} used $\CC^*$-actions to prove the Weak Factorization Conjecture, which asserts that a birational map of smooth projective varieties can be factored as a sequence of blowups and blowdowns in smooth centers. The key tool in his work was the notion of birational cobordism,  constructed by Morelli in the toric case \cite{Morelli}. This is a quasiprojective variety with a $\CC^*$-action, containing two open equivariant subsets admitting quotients to two birationally equivalent varieties; see  \cite{AKMW} and \cite{Wlodarczyk2009} for a broader view on this topic.

In the present paper we deal with equivariant birational modifications of smooth projective varieties with a $\CC^*$-action. When the action is equalized, which means that no point has finite isotropy, after blowing up the source and the sink of the action (see Section \ref{ssec:actions} for definitions) we obtain a variety which admits a system of small $\QQ$-factorial $\C^*$-equivariant modifications (cf. Section  \ref{sec:movbordism}).
Some of these modifications are compactifications of sets of stable points which are $\PP^1$-bundles over the respective quotients (Proposition \ref{prop:P1model}). Each of these modifications is a projective version of a cobordism associated to the natural birational map between a pair of GIT quotients. Namely, each of the varieties that we construct is a %{\em bordisms}; these are 
$\C^*$-equivariant compactification of a cobordism. Its boundary consists of two divisors -- that are the source and the sink of the action -- which are birationally equivalent varieties in W{\l}odarczyk's construction of the cobordism.  Roughly speaking, cobordisms represent birational maps, and we study here their birational modifications.

More precisely, the main results of the paper can be summarized in the following statement (see Sections \ref{ssec:actions}, \ref{ssec:BBS} for notation and definitions of the concepts involved). For clarity of exposition, we state here only the case in which the extremal fixed point components are positive dimensional.  Our arguments work also in the remaining cases; a complete description can be found in Section \ref{sec:movbordism}.

\begin{theorem}\label{thm:main}
Let $X$ be a smooth complex projective variety of Picard number one admitting a nontrivial $\C^*$-action. Assume that the action is equalized of criticality $r$, and that its extremal fixed point components $Y_0,Y_r$ are not isolated points. Denote by $\GX(i,i+1)$, $i=0,\dots,r-1$, the corresponding geometric quotients. Then:
\begin{enumerate}
\item The varieties $\GX(i,i+1)$ are smooth and the natural birational maps
$$\xymatrix{
\GX(0,1)\ar@{-->}[r]& \GX(1,2)\ar@{-->}[r]& \ldots \ar@{-->}[r]& %\GX(r-2,r-1)\ar@{-->}[r]&
 \GX(r-1,r)}
$$
are flips. %except in the following two cases:
%\begin{itemize}
%\item if $Y_0$ is a point then $\GX(0,1)\dashrightarrow \GX(1,2)$ is a blowdown;
%\item if $Y_r$ is a point then $\GX(r-2,r-1)\dashrightarrow \GX(r-1,r)$ is a blowup.
%\end{itemize}
\item The blowup $X^\flat$ of $X$ along $Y_0$, $Y_r$ is a Mori dream space.
\item Given a pair $(i,j)$ of indices $i,j\in\{0,\dots,r\}$, $i\leq j$, %, different from:
%\begin{itemize}
%\item $(0,1)$ if $Y_0$ is a point, and from
%\item $(r-1,r)$ if $Y_r$ is a point,
%\end{itemize}
there exists a unique small $\Q$-factorial modification $X(i,j)$ of $X^\flat$ that is smooth and admits a $\C^*$-action with extremal fixed point components $\GX(i,i+1)$, $\GX(j,j+1)$.
\item Every small $\Q$-factorial modification of $X^\flat$ is constructed as above.
\end{enumerate}
\end{theorem}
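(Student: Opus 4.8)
The plan is to exploit the relation between small $\QQ$-factorial modifications of $X^\flat$ and GIT: by part (2) $X^\flat$ is a Mori dream space, so by \cite{HuKeel} (see also \cite{ADHL}) its small $\QQ$-factorial modifications are exactly the projective GIT quotients $\Spec(\Cox(X^\flat))\!/\!\!/_\chi\,\TT$ obtained by varying the linearization $\chi$ in the movable cone; equivalently, they correspond to the chambers of a wall-and-chamber decomposition of $\cMov{X^\flat}$. Thus to prove (4) it suffices to show that the modifications $X(i,j)$ constructed in (3) exhaust all the chambers, i.e.\ that the number of chambers of $\cMov{X^\flat}$ equals the number of pairs $(i,j)$ with $0\le i\le j\le r$, and that distinct pairs give distinct (non-isomorphic over $X^\flat$) modifications.

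The key structural input is that $\rho(X)=1$ together with the blowup along the two extremal fixed components forces $\rho(X^\flat)=3$ (one class from $X$, one for each exceptional divisor $E_0,E_r$), so $\cMov{X^\flat}$ is a three-dimensional cone and its wall-and-chamber structure can be drawn explicitly as a two-dimensional picture. First I would identify, among all small modifications, the ``extremal'' ones: the GIT quotients $\GX(i,i+1)$ of part (1) are the bases of the $\PP^1$-bundle models $X(i,i+1)$ of Proposition \ref{prop:P1model} when $i$ runs over $0,\dots,r-1$, plus the model $X(0,r)=X^\flat$ itself and, at the other extreme, certain models where the $\CC^*$-action degenerates; these sit at the boundary chambers. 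The flips $\GX(i,i+1)\dashrightarrow\GX(i+1,i+2)$ of part (1) are precisely the codimension-one wall-crossings along the ``horizontal'' direction, while the two blowdowns contracting $E_0$ and $E_r$ give the remaining walls. Counting: each interior chamber is cut out by the choice of how far we have flipped from the source side (index $i$) and how far from the sink side (index $j$), giving exactly the pairs $i\le j$; I would make this rigorous by running the $\CC^*$-MMP (as in Section \ref{sec:movbordism}) on each small modification $W$ of $X^\flat$ — the action lifts to $W$ because $W$ is obtained from $X^\flat$ by flips which are $\CC^*$-equivariant, and its extremal fixed point components must be among the $\GX(k,k+1)$, determining $(i,j)$ uniquely.

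For uniqueness in (3)–(4): given $(i,j)$, the two extremal fixed components pin down two faces of $\cMov{X^\flat}$, hence a unique chamber, hence a unique small $\QQ$-factorial modification; smoothness of $X(i,j)$ follows because it is obtained from the smooth $\PP^1$-bundle $X(i,i+1)$ (Proposition \ref{prop:P1model}) by a sequence of the equivariant flips of part (1), each of which preserves smoothness as these are flips along the closures of Bia\l{}ynicki-Birula cells over smooth quotients. Conversely, any small $\QQ$-factorial modification $W$ of $X^\flat$ inherits a $\CC^*$-action, and running the procedure of Section \ref{sec:movbordism} (decompose the birational map $X^\flat\dashrightarrow W$ into the elementary flips and blowdowns indexed by the BB-decomposition) exhibits $W$ as some $X(i,j)$; that this procedure terminates and lands in the finite list is guaranteed by the Mori dream space property from (2).

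The main obstacle I anticipate is the converse direction (4): one must show that \emph{every} small $\QQ$-factorial modification of $X^\flat$ is $\CC^*$-equivariant with extremal fixed components among the $\GX(k,k+1)$, so that the book-keeping by pairs $(i,j)$ is exhaustive and not just injective. This is where the equalized hypothesis and the detailed analysis of the BB-cells in Section \ref{sec:movbordism} are essential — without it the walls of $\cMov{X^\flat}$ need not all come from flips of BB-centers, and extra chambers (not of the form $X(i,j)$) could appear. Concretely I would argue that the $\CC^*$-action on $\Cox(X^\flat)$ induced by $X$ acts on the wall-and-chamber decomposition, that the stable base loci of divisors on $X^\flat$ are exactly the closures of the BB-cells (here equalization guarantees these loci are smooth and the induced modifications are again smooth), and hence every wall is crossed by flipping one such locus — closing the loop with part (1).
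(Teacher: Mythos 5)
There is a genuine logical gap at the center of your plan: you derive (3) and (4) \emph{from} (2) via the Hu--Keel correspondence between small $\Q$-factorial modifications and Mori chambers, but you never prove (2), and in the paper the implication runs in the opposite direction. The Mori dream space property of $X^\flat$ is not an input; it is \emph{deduced} from the explicit construction, namely from the fact that $\cMov{X^\flat}$ is the finite union of the closed chambers $\overline{N_{i,j}}$ and that each $\overline{N_{i,j}}$ is exhibited as $\varphi_{i,j}^*\Nef(X(i,j))$ for a concretely built small modification $X(i,j)$ (Proposition \ref{prop:precise}, Theorem \ref{thm:Mori}); only then does the characterization of \cite{HuKeel} give that $X^\flat$ is an MDS. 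As written, your argument for (4) (``SQMs $=$ chambers because $X^\flat$ is an MDS'') is circular. Relatedly, your resolution of the ``main obstacle'' — that every SQM of $X^\flat$ is $\C^*$-equivariant with extremal fixed components among the $\GX(k,k+1)$ — is not needed once the chamber decomposition is established unconditionally: every SQM corresponds to a chamber, every chamber is some $N_{i,j}$, and $N_{i,j}$ already carries the equivariant model $X(i,j)$.

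The second gap is that you assert, rather than prove, the two technical facts on which everything rests. First, that each wall-crossing ``preserves smoothness as these are flips along the closures of Bia{\l}ynicki-Birula cells over smooth quotients'': this is precisely the content of Theorem \ref{thm:flip}, whose proof occupies Section \ref{sec:bord} and requires showing that $B^+(Y)\simeq\P(\cN^+(Y)^\vee\oplus\cO_Y)$ with $\cN_{B^+(Y)|X}|_F\simeq\cO_F(-1)^{\oplus\nu^-(Y)}$ on fibers (Proposition \ref{prop:SQM1}), applying Nakano's contractibility criterion to the blowup along $B^+_1$ (Corollary \ref{cor:SQM1}), and then proving projectivity of the resulting complex manifold by exhibiting a supporting $\Q$-divisor (Proposition \ref{prop:suppdiv}); none of this is automatic, and the hypothesis $\nu^-(Y)>1$ (guaranteed here by Lemmas \ref{lem:bordism2} and \ref{lem:nonbordism}) is essential for the flipped locus to have codimension at least two. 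Second, your claim that the stable base loci of movable classes are exactly the closures of the BB-cells is Corollary \ref{cor:SBL}, which rests on the $\C^*$-weight decomposition $\HH^0(X^\flat,mL(\tau_-,\tau_+))\simeq\bigoplus_{k=m\tau_-}^{m\tau_+}\HH^0(X,mL)_k$ of Lemma \ref{lem:projections2}; without this computation you cannot identify the walls, count the chambers, or rule out extra chambers. Your global picture (a three-dimensional movable cone, $\P^1$-bundle models $\widehat{X}(i,i+1)$ over the quotients, flips of part (1) as horizontal wall-crossings) agrees with the paper's, but the proof as proposed does not close.
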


Note that in the above statement we are using the word {\em flip} to refer to a $D$-flip, for a certain  $\Q$-divisor $D\in\Pic(X^\flat)\otimes_{\Z}\Q$ (see \cite[Definition~1.9]{HK}). More concretely, we will see that the flips appearing in our description are compositions of a smooth blowup and a smooth blowdown (Section \ref{sec:bord}). In order to prove the previous statement we will describe completely the small modifications among the varieties $X(i,j)$, showing that they can be written as a composition of $\C^*$-equivariant flips, whose restriction to the extremal fixed point components are the flips described in Part (1) of Theorem (Section \ref{sec:movbordism}).

\medskip

\noindent{\bf Outline:} We start the paper with a section of preliminaries on $\C^*$-actions and their GIT quotients; we also  include some results on $\C^*$-invariant linear systems on the variety $X$ and its blowup $X^{\flat}$ along its extremal fixed point components. Section \ref{sec:bord} contains the technical core of the paper: we construct some $\C^*$-equivariant flips of varieties admitting an equalized $\C^*$-action with codimension one sink and source, relating them to flips of these extremal fixed point components (Theorem \ref{thm:flip}). By applying recursively Theorem \ref{thm:flip}, we prove Theorem \ref{thm:main} in Section \ref{sec:movbordism}, describing completely the movable cone of the variety $X^\flat$ of the statement and its Mori chamber decomposition. Finally we illustrate our results by showing how to construct examples of equalized $\C^*$-actions in the framework of representation theory and rational homogeneous spaces (Section \ref{sec:RH}). In particular we revisit some examples of equalized $\C^*$-actions of small bandwidth, previously presented in \cite{WORS1}.  

\medskip

\noindent{\bf Acknowledgements.} The authors would like to thank Joachim Jelisiejew for his valuable help with questions regarding GIT. The third author would like to thank Fran Presas for the inspiring conversations about Morse theory 20 years ago.

%!TEX root = WORS3.tex

\section{Preliminaries}\label{sec:prelim}

\subsection{Actions of complex tori}\label{ssec:actions}

We will introduce here some notation and conventions on $\C^*$-actions. We refer to \cite{WORS1,CARRELL,BB} for further details. 

\subsubsection*{Fixed point components}\label{sssec:fxd} Given a $\C^*$-action on a proper complex algebraic variety $X$, we denote by $X^{\C^*}$  the fixed locus of the action, and by $\cY$ the set of irreducible fixed point components of the action, so that we may write: 
$$X^{\C^*}=\bigsqcup_{Y\in \cY}Y.$$ 
Among these components there are two distinguished ones, called {\em sink} and {\em source} of the action, defined by the property of containing, respectively, the limiting points:
$$
\lim_{t\to 0}t^{-1}x, \quad \lim_{t\to 0}tx
$$
where $x\in X$ is a general point. If $X$ is smooth and projective, which will be mostly our case, every $Y\in\cY$ is smooth (cf. \cite{IVERSEN}).

\subsubsection*{Linearizations and weight maps}\label{sssec:lin} Given a $\C^*$-action on a normal projective variety $X$ as above, and given a line bundle $L\in \Pic(X)$, one may find a linearization of the $\C^*$-action on it (cf. \cite{KKLV}), so that for every $Y\in \cY$, $\C^*$ acts on $L_{|Y}$ by multiplication with a character $m\in \Mo(\C^*)=\Hom(\C^*,\C^*)$, that we call {\em weight of the linearization on $Y$}. It is well known that any two linearizations differ by a character of $\C^*$, so that for every line bundle $L$ there exists a unique linearization (called {\em normalized})  whose weight at the sink is equal to zero. By fixing an isomorphism $\Mo(\C^*)\simeq \Z$, this linearization defines a map $\mu_L:\cY\to\Z$, sending every fixed point component to its weight.

Abusing notation we will denote with the same symbols line bundles and the Cartier divisors defining them, and use the additive notation for the group operation in $\Pic(X)$. Note that
$$
\mu_{kL}(Y)=k\mu_L(Y),\quad \mu_{L+L'}(Y)=\mu_{L}(Y)+\mu_{L'}(Y),
$$
for every $L,L'\in\Pic(X)$, $k\in \Z$, $Y\in \cY$.
In particular we may extend this definition to $\Q$-divisors in $X$, by setting:
$$
\mu_{qL}(Y):=q\mu_L(Y)\in\Q, \quad\mbox{for }q\in\Q,\,\, L\in\Pic(X),\,\, Y\in \cY.
$$

\subsubsection*{Actions on polarized pairs}\label{sssec:polar} A $\C^*$-action on a projective variety $X$, endowed with the weight map $\mu_L$ determined by an ample $\Q$-divisor $L$, will be referred to as a {\em $\C^*$-action on the $\Q$-polarized pair $(X,L)$}. In this case one may denote by 
$$
a_0<\dots<a_r,$$ 
the weights $\mu_L(Y)$, $Y\in\cY$, ordered increasingly, and set: 
$$
Y_i:=\bigcup_{\mu_L(Y)=a_i}Y.
$$ 
In analogy with the case of Morse theory, the values $a_i$ will be called the {\em critical values}; the number $r$ will be called the {\em criticality of the $\C^*$-action.}
It is well known that the minimum and maximum of these values are achieved at the sink and the source of the action, respectively, so that, $Y_0,Y_r$ are respectively the sink and the source of the action and, in particular, $a_0=0$. The value $\delta=a_r$ is called the {\em bandwidth} of the $\C^*$-action on $(X,L)$. 

\subsubsection*{The Bia{\l}ynicki-Birula decomposition}\label{sssec:BBcells}
Let $X$ be a proper variety admitting a $\C^*$-action as above. 
Given $Y\in \cY$, we denote by 
\begin{equation}\label{eq:BBcells}
\begin{array}{l}X^\pm(Y):=\{x\in X|\,\, \lim_{t^{\pm 1}\to 0} tx\in Y\},\\[3pt]
B^\pm(Y):=\overline{X^\pm(Y)},
\end{array}
\end{equation}
the {\it Bia{\l}ynicki-Birula cells} of the action and their closures; we refer to \cite{CARRELL} for a complete account on the Bia{\l}ynicki-Birula decomposition and its applications, and to \cite{BB} for the original reference. 
When considering the action of $\C^*$ on a $\Q$-polarized pair $(X,L)$ as above, we will write:
$$B^\pm_i:=\bigcup_{\mu_L(Y)=a_i}B^\pm(Y) $$
and, for notational reasons, we also set $B^\pm_i=\emptyset$ for $i\in \Z\setminus\{0,\dots,r\}$.

If $X$ is smooth, the normal bundle of $Y$ in $X$ splits into two subbundles, on which $\C^*$ acts with positive and negative weights, respectively:
\begin{equation}
\cN_{Y|X}\simeq \cN^+(Y)\oplus \cN^-(Y).
\label{eq:normal+-}
\end{equation}
We will often use the notation:
\begin{equation}\label{eq:Vpm}
\begin{array}{l}
\nu^{\pm}(Y):=\rk \cN^{\pm}(Y),\\[3pt]
V^\pm(Y):=\P(\cN^{\pm}(Y)^\vee), \quad \displaystyle V^\pm_i:=\bigcup_{\mu_L(Y)=a_i}V^\pm(Y).
\end{array}
\end{equation}
The action of $\C^*$ on $X^{\pm}(Y)$ is equivariantly isomorphic to the induced action on the bundles $\cN^{\pm}(Y)$ (see \cite[Theorem~4.2]{CARRELL} and \cite{BB} for the original exposition).

\subsubsection*{Equalized actions}\label{sssec:equal}

We say that the action of $\C^*$ on a proper variety $X$ is {\em  equalized} at $Y\in\cY$ if for every $x\in \big(X^{-}(Y)\cup X^{+}(Y)\big)\setminus Y$ the isotropy group of the $\C^*$-action on $x$ is trivial. Note that in the smooth case, the definition of equalization presented here coincides with the one introduced in \cite[Definition~1.6]{RW}: 
\begin{lemma} \label{lem:equalized}
Let $X$ be a smooth variety supporting a $\C^*$-action. The action is equalized if and only if the weights of the action on $\cN^{\pm}(Y)$ are all equal to $\pm 1$ for every fixed point component $Y\in\cY$.
\end{lemma}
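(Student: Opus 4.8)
The plan is to work directly with the local model of the action near a fixed point component $Y$, using the fact recalled just above the statement: the $\C^*$-action on $X^{\pm}(Y)$ is equivariantly isomorphic to the induced linear action on the total space of $\cN^{\pm}(Y)$. Since $X$ is smooth, $\cN^{\pm}(Y)$ is a vector bundle on which $\C^*$ acts linearly over the trivial action on $Y$, so at each point $y\in Y$ the fiber $\cN^{\pm}(Y)_y$ decomposes as a direct sum of weight spaces for $\C^*$, with the weights on $\cN^+(Y)$ strictly positive and those on $\cN^-(Y)$ strictly negative (this is exactly how the splitting \eqref{eq:normal+-} is defined). The isotropy of a point $x$ lying over $y$ in the weight-$m$ summand is then the group of $m$-th roots of unity $\mu_m\subset\C^*$; more generally, if $x$ has nonzero components in summands of weights $m_1,\dots,m_s$, its isotropy group is $\mu_{\gcd(m_1,\dots,m_s)}$.

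First I would prove the "if" direction. If all weights on $\cN^\pm(Y)$ equal $\pm1$, then every point $x\in\cN^{\pm}(Y)\setminus Y$ has all its nonzero weight-components in the weight $\pm1$ summand, hence isotropy $\mu_1=\{1\}$, i.e. trivial. Transporting this through the equivariant isomorphism with $X^{\pm}(Y)$, the action is equalized at $Y$; as this holds for every $Y\in\cY$, the action is equalized. For the "only if" direction, suppose some weight of the action on $\cN^{+}(Y)$, say, is an integer $m>1$ (the argument for $\cN^-$ and negative weights is symmetric). Choose $y\in Y$ and a point $x\in\cN^{+}(Y)_y$ lying in the weight-$m$ summand and in no other summand — such a point exists because the weight-$m$ summand is a nonzero subbundle. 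Then $x\notin Y$ (it is a nonzero normal vector) and $x\in X^{+}(Y)$ under the equivariant identification, yet its isotropy group is $\mu_m$, which is nontrivial. This contradicts equalization at $Y$.

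The only subtlety — and the single point that needs a careful word rather than a routine check — is to make sure that a point whose \emph{only} nonzero weight-component is the offending weight $m$ really does exist as a point of the geometric space $X^{+}(Y)$, rather than merely as an abstract element of a weight space; this is immediate once one fixes $y\in Y$ and notes that each weight-$m$ summand of $\cN^{\pm}(Y)$ is itself a (nonzero, locally free) subsheaf, so its fiber at $y$ is a nonzero linear subspace of $\cN^{\pm}(Y)_y$ from which one picks a nonzero vector. One should also note that one need only test isotropy at \emph{one} point in a given weight stratum, since the isotropy is constant along $\C^*$-orbits and the stratum is a union of orbits. Beyond that, the proof is a direct translation between the two equivalent pieces of data (vanishing of higher weights versus triviality of all isotropy groups) afforded by the equivariant local structure of the Bia\l ynicki--Birula cells.
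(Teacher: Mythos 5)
Your proposal is correct and follows essentially the same route as the paper: the paper's proof consists precisely of invoking the $\C^*$-equivariant isomorphisms $X^{\pm}(Y)\simeq \cN^{\pm}(Y)$, and your argument simply spells out the linear-algebra consequence (isotropy of a point with components in weight summands $m_1,\dots,m_s$ is $\mu_{\gcd(m_1,\dots,m_s)}$, so one must test points lying purely in a single weight summand) that the paper leaves implicit. No gap; your write-up is a more detailed version of the same proof.
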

\begin{proof}
It follows from the existence, for every $Y\in \cY$, of the $\C^*$-equivariant isomorphisms $X^{\pm}(Y)\simeq \cN^{\pm}(Y)$. 
\end{proof}

Moreover, the equalization hypothesis implies that the closure of any $1$-dim\-ens\-ional orbit is a smooth rational curve, whose $L$-degree may be computed in terms of the weights at its extremal points. The following statement follows from the AM vs. FM formula presented in \cite{RW}: 

\begin{lemma}[AM vs. FM]\label{lem:AMFM}
Let $(X,L)$ be a $\Q$-polarized pair, with $X$ smooth, supporting an equalized action of $\C^*$, and let $C$ be the closure of a $1$-dimensional orbit, whose sink and source are denoted by $x_-$ and $x_+$. Then $C$ is a smooth rational curve of $L$-degree  equal to $\mu_L(x_+)-\mu_L(x_-)$.
\end{lemma}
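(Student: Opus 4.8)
The plan is to reduce the statement to the known structure of a $1$-dimensional orbit closure under an equalized action, together with the weight/linearization formalism set up in Section~\ref{sssec:lin}. First I would observe that $C$ is the closure of a single orbit $\C^*\cdot x$ for some $x\in X$ that is not fixed, and that $C\setminus(\C^*\cdot x)$ consists of exactly two points, the limits $x_-=\lim_{t\to 0}tx$ and $x_+=\lim_{t\to 0}t^{-1}x$, which are fixed and lie in the sink-direction and source-direction fixed components respectively. Since the action is equalized at the fixed point $x_+$ (viewed as lying in its fixed component $Y\ni x_+$), the orbit $\C^*\cdot x$ sits inside $X^{-}(Y)\simeq \cN^{-}(Y)$, on which, by Lemma~\ref{lem:equalized}, $\C^*$ acts with all weights equal to $-1$; hence the orbit is, up to the equivariant isomorphism, a punctured line $\C^*\subset \C$ on which $t$ acts by $v\mapsto t^{-1}v$, so its closure acquires a single smooth point at the origin. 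The symmetric statement at $x_-$ shows $C$ is smooth also there, and therefore $C\cong\PP^1$ with the standard $\C^*$-action having exactly the two fixed points $x_-,x_+$.

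Next I would compute the $L$-degree. Normalize the linearization of $L$ as in Section~\ref{sssec:lin}; this is harmless since $L$-degree of a curve does not depend on the chosen linearization, only on $L$. Restrict $L$ to $C\cong\PP^1$: then $L_{|C}\cong\cO_{\PP^1}(d)$ with $d=L\cdot C=\deg L_{|C}$, equipped with a $\C^*$-linearization whose weights at the two torus-fixed points of $\PP^1$ are, by definition, $\mu_L(x_-)$ and $\mu_L(x_+)$ (the weight map is compatible with restriction to invariant subvarieties carrying the induced action). The hard part, and really the only content, is the bookkeeping identity for $\PP^1$: on $\cO_{\PP^1}(d)$ with the $\C^*$-action induced from the weight-$(\pm1)$ action on the homogeneous coordinates — which is exactly the situation forced by equalization, as established in the previous paragraph — the difference of the fibrewise weights at the two fixed points equals $d$. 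This is the "AM vs. FM" formula of \cite{RW} specialized to a curve, or can be checked directly: writing $\PP^1$ with coordinates where $t\cdot[x_0:x_1]=[x_0:t^{-1}x_1]$, the linearized bundle $\cO(d)$ has sections $x_0^{a}x_1^{b}$ ($a+b=d$) of weight $-b+c$ for a normalization constant $c$, so the weight at $x_-=[1:0]$ is $c$ and at $x_+=[0:1]$ is $-d+c$; with the normalization pinning $\mu_L$ at the sink the difference is precisely $\mu_L(x_+)-\mu_L(x_-)=d$ (the sign works out because $x_+$ is the source, where the weight is maximal).

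I would then assemble these: $L\cdot C=d=\mu_L(x_+)-\mu_L(x_-)$, which is the claimed formula, and $C\cong\PP^1$ is the smoothness claim. The one genuine obstacle to be careful about is the \emph{sign and normalization}: one must make sure the orientation conventions for sink versus source (which determine which of $x_\pm$ carries the larger weight, cf. the discussion of critical values in Section~\ref{sssec:polar}) are matched with the convention that equalization gives weight $+1$ on $\cN^+$ and $-1$ on $\cN^-$, so that $\mu_L(x_+)\ge\mu_L(x_-)$ and the $L$-degree comes out nonnegative, consistently with $L$ being $\Q$-ample. Everything else is the standard equivariant Riemann--Roch / weight computation on $\PP^1$, which I would simply cite from \cite{RW} rather than reprove.
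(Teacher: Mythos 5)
Your proposal is correct and follows essentially the same route as the paper, which disposes of the lemma in one line: smoothness of $C$ at its limit points via the Bia{\l}ynicki-Birula isomorphism $X^{\pm}(Y)\simeq\cN^{\pm}(Y)$ (where equalization forces the orbit closure to be a line in a fibre, exactly as you argue), and the degree identity by citing \cite[Corollary~3.2(c)]{RW}. The only wobble is that your explicit $\PP^1$ weight bookkeeping as written yields $-d$ rather than $d$ before you wave at the normalization, but since you correctly identify this as a convention issue and defer to \cite{RW} for the formula itself, nothing substantive is missing.
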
 
\begin{proof}
The smoothness follows from the Bia{\l}ynicki-Birula decomposition (see \cite[Theorem~4.2]{CARRELL}), while the degree statement is precisely \cite[Corollary 3.2 (c)]{RW}.
\end{proof}

\subsubsection*{B-type actions and bordisms}\label{sssec:btype}
Following \cite{WORS1}, a $\C^*$-action on a proper algebraic variety $X$ whose extremal fixed point components $Y_0,Y_r$ have codimension one is called a {\em B-type action}. In the case in which $X$ is projective and smooth, by the Bia{\l}ynicki-Birula decomposition, the restriction maps $\Pic(X)\to \Pic(Y_j)$, $j=0,r$ are surjective; we will say that the $\C^*$-action is a {\em bordism} (cf. \cite[Definition~3.8]{WORS1}) if the restriction maps $\imath_j^*:\Pic(X)\to \Pic(Y_j)$, $j=0,r$, fit into two short exact sequences:
$$
\begin{array}{c}
\shse{\Z[Y_r]}{\Pic(X)}{\Pic(Y_0)}
\\[2pt]
\shse{\Z[Y_0]}{\Pic(X)}{\Pic(Y_r)}
\end{array}
$$

The following equivalence, which is a consequence of \cite[Theorem~3]{CS79} has been proved in \cite[Corollary~3.7]{WORS1}.
\begin{lemma}\label{lem:bordism} Let $X$ be a smooth projective variety admitting a $\C^*$-action of B-type. The action of $\C^*$ on $X$ is a bordism if and only if $\nu^\pm(Y)\geq 2$ for every inner fixed point component $Y$. 
\end{lemma}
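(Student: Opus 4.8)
The plan is to describe explicitly the kernels of the restriction maps $\imath_j^*\colon\Pic(X)\to\Pic(Y_j)$, $j=0,r$, in terms of the Bia{\l}ynicki-Birula decomposition, and then read off the bordism condition. Throughout I use that $X$ is smooth (so Weil and Cartier divisors agree and the excision sequence for class groups is available) and that, for a $\C^*$-action, each cell $X^-(Y)$ is $\C^*$-equivariantly the total space of the vector bundle $\cN^-(Y)$ over $Y$; in particular, for the open dense cell $X^-(Y_0)$ the pullback gives an isomorphism $\Pic(X^-(Y_0))\xrightarrow{\sim}\Pic(Y_0)$ through which $\imath_0^*$ factors (the zero section being $Y_0\hookrightarrow X^-(Y_0)$).

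First I would record the relevant numerology. Since $Y_0$ is the sink and has codimension one, $X^-(Y_0)$ is the unique dense cell, so $\dim Y_0+\nu^-(Y_0)=\dim X$, which forces $\nu^-(Y_0)=1$ and $\nu^+(Y_0)=0$; symmetrically $\nu^+(Y_r)=1$, $\nu^-(Y_r)=0$. The complement $X\setminus X^-(Y_0)=\bigsqcup_{Y\neq Y_0}X^-(Y)$ is stratified by the irreducible locally closed sets $X^-(Y)$, with closures $B^-(Y)$ of dimension $\dim Y+\nu^-(Y)$. Using $\codim_X Y=\nu^+(Y)+\nu^-(Y)$, one gets $\dim B^-(Y)=\dim X-1$ exactly when $\nu^+(Y)=1$, and otherwise $B^-(Y)$ has codimension at least two. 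In particular $Y_r=B^-(Y_r)$ is a divisorial stratum, and $[Y_r]\in\ker\imath_0^*$, since $Y_r\cap X^-(Y_0)=\emptyset$.

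Next I would invoke \cite[Theorem~3]{CS79}: for the smooth projective variety $X$ with its $\C^*$-action it yields that $\imath_0^*$ is surjective with kernel the free abelian group generated by the classes $[B^-(Y)]$ with $Y\neq Y_0$ and $\nu^+(Y)=1$. Hence the sequence
$$\shse{\Z[Y_r]}{\Pic(X)}{\Pic(Y_0)}$$
is exact if and only if $Y_r$ is the only fixed component with $\nu^+=1$, that is, if and only if $\nu^+(Y)\geq 2$ for every inner component $Y$. Replacing the action of $t$ by that of $t^{-1}$ exchanges the roles of $+$ and $-$ and of sink and source, so the same argument shows that
$$\shse{\Z[Y_0]}{\Pic(X)}{\Pic(Y_r)}$$
is exact if and only if $\nu^-(Y)\geq 2$ for every inner component $Y$. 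By definition the action is a bordism precisely when both sequences are exact, so this holds if and only if $\nu^\pm(Y)\geq 2$ for every inner fixed point component $Y$.

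The point requiring care is the precise content of \cite[Theorem~3]{CS79}: one must be sure it gives not just a surjection $\Cl(X)\twoheadrightarrow\Cl(X^-(Y_0))$ together with a spanning set of the kernel (the formal excision statement), but that the codimension-one strata $[B^-(Y)]$ are linearly independent in $\Pic(X)$, so that the kernel is free on exactly these classes with no accidental relation or coincidence among them. Once this input is granted, the identification of the bordism condition is the bookkeeping carried out above.
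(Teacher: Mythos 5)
Your argument is correct and is essentially the paper's own: the paper derives this lemma from [CS79, Theorem~3] (via [WORS1, Corollary~3.7]), which is exactly the input you isolate, namely that $\Pic(X)$ is $\Pic(Y_0)$ plus a free summand generated by the classes of the divisorial cell closures $B^-(Y)$, $\nu^+(Y)=1$. The surrounding bookkeeping — $\dim B^-(Y)=\dim X-\nu^+(Y)$, the factorization of $\imath_0^*$ through $\Pic(X^-(Y_0))\simeq\Pic(Y_0)$, and the symmetry $t\mapsto t^{-1}$ for the second sequence — matches the intended derivation.
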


\subsubsection*{The blowup of a $\C^*$-variety along the extremal fixed point components}\label{sssec:blowup}
Examples of B-type $\C^*$-actions can be constructed by considering any $\C^*$-action on a proper variety smooth at its extremal fixed point component $Y_0,Y_r$. If the action is equalized at $Y_0,Y_r$, then the blowup $X^\flat$ of $X$ along $Y_0,Y_r$ inherits a $\C^*$-action that is of B-type (see \cite[Lemma~3.10]{WORS1}). As we will see later on, there is a deep relation among the birational geometry of $X^{\flat}$ and the birational geometry of the GIT quotients of $X$. In order to describe precisely this relation we will need to use study certain linear systems on $X^{\flat}$. Let us first introduce some notation.

\begin{notation}\label{not:linebundles1}
We denote by $\beta:X^\flat\to X$ the blowup of $X$ along $Y_0,Y_r$, and by $Y_i^{\flat}:=\beta^{-1}(Y_i)$, $i=0,r$ its exceptional divisors. For every $\tau_-,\tau_+\in\R$ satisfying that $0\leq \tau_-\leq\tau_+\leq \delta$ we introduce the following $\R$-divisor:
$$
L(\tau_-,\tau_+):=\beta^*L-\tau_-Y_0^\flat-(\delta-\tau_+)Y_r^\flat\in \NU(X^{\flat}).
$$
\end{notation}
We may now state the following Lemma, that will be very important later on:

\begin{lemma}\label{lem:projections2}
Let $(X,L)$ be a $\Q$-polarized pair, with $X$ smooth, admitting a $\C^*$-action, equalized at the sink and the source. For every $m\in \N,\tau_-,\tau_+\in \Q$ such that $mL\in\Pic(X)$, $0\leq \tau_-\leq \tau_+\leq \delta$, $m\tau_\pm\in\Z$, the map $\beta^*$ induces an isomorphism:
$$
\bigoplus_{k=m\tau_-}^{m\tau_+}\HH^0(X,mL)_k\simeq \HH^0(X^\flat,mL(\tau_-,\tau_+)),
$$
where $\HH^0(X,mL)_k$ is the eigenspace of $\HH^0(X,mL)$ on which $\C^*$ acts with weight $k$.
\end{lemma}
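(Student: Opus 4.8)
The plan is to reduce the statement, via the projection formula for $\beta$, to a comparison between the $\C^*$-weight of a section of $mL$ on $X$ and its orders of vanishing along the sink $Y_0$ and the source $Y_r$, and then to carry out that comparison using the equalization hypothesis.

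First I would record the properties of $\beta$ that are used. As $\beta$ is a birational projective morphism of smooth (hence normal) varieties, $\beta_*\cO_{X^\flat}=\cO_X$, so the projection formula identifies $\HH^0(X,mL)$ with $\HH^0(X^\flat,\beta^*(mL))$, compatibly with the $\C^*$-actions (the blowup $\beta$ is equivariant, since $Y_0,Y_r$ are $\C^*$-invariant). Since $Y_0$ and $Y_r$ are smooth and disjoint, $\beta_*\cO_{X^\flat}(-NY_j^{\flat})=\cI_{Y_j}^N$ for $j=0,r$ and all $N\ge0$, and hence, again by the projection formula, $\beta_*\big(\beta^*(mL)\otimes\cO_{X^\flat}(-NY_j^{\flat})\big)=mL\otimes\cI_{Y_j}^N$. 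Setting $a:=m\tau_-$ and $b:=m(\delta-\tau_+)$, which are nonnegative integers since $0\le\tau_-\le\tau_+\le\delta$, $m\tau_\pm\in\Z$ and $m\delta=\mu_{mL}(Y_r)\in\Z$, and using $Y_0\cap Y_r=\emptyset$, I would conclude that $\beta^*$ identifies $\HH^0(X,mL\otimes\cI_{Y_0}^a\otimes\cI_{Y_r}^b)$ with $\HH^0(X^\flat,mL(\tau_-,\tau_+))$ inside $\HH^0(X^\flat,\beta^*(mL))$. So the task reduces to identifying this subspace of $\HH^0(X,mL)$ with $\bigoplus_{k=a}^{m\delta-b}\HH^0(X,mL)_k$.

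Since $\cI_{Y_0},\cI_{Y_r}$ are $\C^*$-invariant, $\HH^0(X,mL\otimes\cI_{Y_0}^a\otimes\cI_{Y_r}^b)$ is a $\C^*$-submodule of $\HH^0(X,mL)$, hence a sum of weight eigenspaces, so it is enough to prove that every nonzero $s\in\HH^0(X,mL)_k$ satisfies $\operatorname{ord}_{Y_0}(s)=k$ and $\operatorname{ord}_{Y_r}(s)=m\delta-k$ (from which, in particular, $\HH^0(X,mL)_k=0$ unless $0\le k\le m\delta$). For the equality at $Y_0$ I would argue as follows: the $\cI_{Y_0}$-adic filtration of $\HH^0(X,mL)$ is $\C^*$-stable, and its $N$-th graded quotient embeds $\C^*$-equivariantly into $\HH^0\big(Y_0,\Symm^N(\cN_{Y_0|X}^\vee)\otimes mL_{|Y_0}\big)$. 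As $Y_0$ is the sink, $\cN_{Y_0|X}=\cN^-(Y_0)$, on which $\C^*$ acts with weight $-1$ by Lemma \ref{lem:equalized}, while $\C^*$ acts on $mL_{|Y_0}$ with weight $\mu_{mL}(Y_0)=0$ for the normalized linearization; hence $\C^*$ acts on $\HH^0(Y_0,\Symm^N(\cN^-(Y_0)^\vee)\otimes mL_{|Y_0})$ purely with weight $N$ (the base $Y_0$ being pointwise fixed). Then for a nonzero $s\in\HH^0(X,mL)_k$, setting $N:=\operatorname{ord}_{Y_0}(s)<\infty$, the image of $s$ in the $N$-th graded quotient is nonzero and has weight $k$, while that quotient is pure of weight $N$; therefore $N=k$. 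The equality $\operatorname{ord}_{Y_r}(s)=m\delta-k$ is obtained symmetrically, using that $\cN_{Y_r|X}=\cN^+(Y_r)$ carries weight $+1$ and $\C^*$ acts on $mL_{|Y_r}$ with weight $\mu_{mL}(Y_r)=m\delta$, so that the $N$-th graded quotient along $Y_r$ has pure weight $m\delta-N$.

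Granting these two identities, a nonzero $s\in\HH^0(X,mL)_k$ lies in $\HH^0(X,mL\otimes\cI_{Y_0}^a\otimes\cI_{Y_r}^b)$ exactly when $k=\operatorname{ord}_{Y_0}(s)\ge a$ and $m\delta-k=\operatorname{ord}_{Y_r}(s)\ge b$, i.e. exactly when $m\tau_-=a\le k\le m\delta-b=m\tau_+$; combined with the previous paragraphs, this yields the claimed isomorphism. I expect the only real obstacle to be the pair of identities $\operatorname{ord}_{Y_0}(s)=k$, $\operatorname{ord}_{Y_r}(s)=m\delta-k$ for a weight vector $s$: it is precisely equalization (weights $\pm1$ on the normal directions at sink and source, Lemma \ref{lem:equalized}) that makes each $\cI$-adic graded quotient pure of the expected weight, so that the $\C^*$-weight of a section is recorded exactly by its order of vanishing; without equalization one would obtain only inequalities, and $\beta^*$ would then fail to surject onto $\HH^0(X^\flat,mL(\tau_-,\tau_+))$.
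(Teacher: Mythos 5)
Your proposal is correct and follows essentially the same route as the paper: both reduce the statement to identifying the weight-$k$ eigenspace $\HH^0(X,mL)_k$ with the sections of $mL$ vanishing to order $k$ along $Y_0$ and to order $m\delta-k$ along $Y_r$, and then transport this through $\beta^*$. The only difference is that the paper imports that identification from the literature (a local description of invariant sections along a fixed point component), whereas you prove it directly via the $\C^*$-equivariant graded pieces $\Symm^N(\cN_{Y_j|X}^\vee)\otimes mL_{|Y_j}$ of the ideal-adic filtration, which is a complete and correct substitute resting, as it should, on the equalization weights $\pm1$ at the sink and source.
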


\begin{proof}
Note that $\beta^*$ induces a $\C^*$-equivariant embedding of $\HH^0(X^\flat,m(\beta^*L-\tau_-Y_0^\flat-(\delta-\tau_+)Y_r^\flat))$ into $\HH^0(X,mL)$. In order to determine its image we use \cite[Lemma~2.17]{BWW} (that describes locally the invariant sections of a line bundle along a fixed point component), that allows us to identify $\HH^0(X,mL)_k$ with the set of sections of $mL$ vanishing with multiplicity $k$ at $Y_0$, and with multiplicity $m\delta-k$ at $Y_r$, that is with $\HH^0(X,L^{\otimes m}\otimes I_{Y_0}^k\otimes I_{Y_r}^{m\delta-k})$. Then the statement follows by pulling back this identification to $X^\flat$ via $\beta$.
\end{proof}

The importance of this Lemma relies in the fact that it allows us to describe the base loci of 
divisors in $X^\flat$  
in terms of the closures of the Bia{\l}ynicki-Birula cells defined in (\ref{eq:BBcells}). Let us start by introducing the following:

\begin{notation}\label{not:linebundles2}
In the setup of Lemma \ref{lem:projections2}, let $0=a_0<\dots<a_r$ be the critical values of the action. For every $i,j\in\{0,\dots,r\}$ (see Figure \ref{fig:BSL}) we set:
$$
\begin{array}{l}
\Bss_i^+:=\{x\in X\,|\,\,\mu_L\big(\displaystyle\lim_{t\to 0} tx\big)\leq a_i\}=\bigcup_{k\leq i}B^{+}_k,\\[2pt]
\Bss_j^-:=\{x\in X\,|\,\,\mu_L\big(\displaystyle\lim_{t\to \infty} tx\big)\geq a_j\}=\bigcup_{k\geq j}B^{-}_k,\\[2pt]
\Bss_{ij}:=\Bss_i^+\cup\Bss_j^-.
\end{array}
$$

\begin{figure}[h!]
\includegraphics[width=11.5cm]{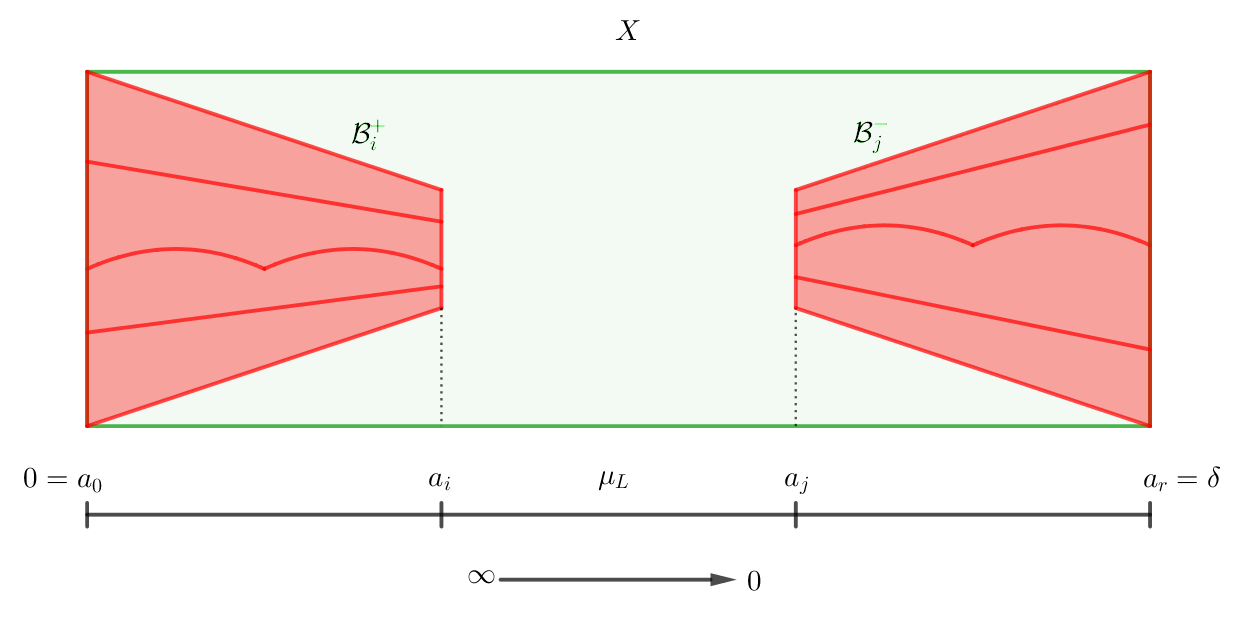}
\caption{The closed subsets $\Bss_i^+$, $\Bss_j^-$. \label{fig:BSL}}
\end{figure}

We denote by $\Bss_{ij}^\flat\subset X^\flat$ the closure of $\beta^{-1}(\Bss_{ij}\setminus (Y_0\cup Y_r))$. For  every $\tau\in[0,\delta]\cap \Q$, we set:
\begin{equation}\label{eq:ij}
i(\tau):=\min\{i \,|\,\,a_i\geq\tau\}-1,\quad j(\tau):=\max\{j\,|\,\,a_j\leq\tau\}+1.
\end{equation}
This notation is set so that, given rational numbers $\tau_-\leq \tau_+$ in $[0,\delta]$, we have 
$$
\cB_{i(\tau_-)j(\tau_+)}=\bigcup_{a_i\leq \tau_-}B_i^+
\cup \bigcup_{a_j\geq\tau_+}B_j^-.$$
\end{notation}
In particular we can state:

\begin{corollary}\label{cor:BSL}
Let $(X,L)$ be as in Lemma \ref{lem:projections2}. For every $\tau_-,\tau_+\in[0,\delta]\cap \Q$, $\tau_-\leq\tau_+$,  and for  $m\gg 0$ such that $m\tau_-,m\tau_+\in\Z$, the following  hold: 
$$
\begin{array}{l}
\Bs\left(\bigoplus_{k=m\tau_-}^{m\tau_+}\HH^0(X,mL)_k\right)=\Bss_{i(\tau_-)j(\tau_+)},\\[4pt]
\Bs\left( mL(\tau_-,\tau_+)\right)=\Bss^\flat_{i(\tau_-)j(\tau_+)}. 
\end{array}
$$
\end{corollary}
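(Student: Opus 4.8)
The plan is to deduce Corollary~\ref{cor:BSL} from Lemma~\ref{lem:projections2} together with a direct analysis of the base locus of the $\C^*$-eigenspaces $\bigoplus_{k=m\tau_-}^{m\tau_+}\HH^0(X,mL)_k$ on $X$, and then transport the statement to $X^\flat$ via the pullback isomorphism. So first I would establish the statement on $X$, i.e. that for $m\gg 0$ with $m\tau_\pm\in\Z$,
$$
\Bs\Big(\bigoplus_{k=m\tau_-}^{m\tau_+}\HH^0(X,mL)_k\Big)=\Bss_{i(\tau_-)j(\tau_+)}.
$$
The inclusion $\subseteq$ is the easy direction. Since $mL$ is very ample for $m\gg 0$, a point $x\notin\Bss_{i(\tau_-)j(\tau_+)}$ means $\mu_L(\lim_{t\to 0}tx)>\tau_-$ (more precisely $\geq a_{i(\tau_-)+1}>\tau_-$ by the definition~(\ref{eq:ij})) and $\mu_L(\lim_{t\to\infty}tx)<\tau_+$; I would use the identification in Lemma~\ref{lem:projections2} of $\HH^0(X,mL)_k$ with $\HH^0(X,L^{\otimes m}\otimes I_{Y_0}^k\otimes I_{Y_r}^{m\delta-k})$, together with the existence of a $\C^*$-invariant section of $mL$ not vanishing at $x$: by decomposing a section through the weight grading one finds an eigensection of some admissible weight $k\in[m\tau_-,m\tau_+]$ that is nonzero at $x$. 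Here the point is that the weight of a $\C^*$-invariant section at a point $x$ with limits in $Y_{k_-},Y_{k_+}$ must lie between $\mu_L(\lim_{t\to 0}tx)$ and $\mu_L(\lim_{t\to\infty}tx)$ in a suitable normalization; this is the content of the local description in \cite[Lemma~2.17]{BWW} used in the proof of Lemma~\ref{lem:projections2}.

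For the reverse inclusion $\Bss_{i(\tau_-)j(\tau_+)}\subseteq\Bs(\cdots)$, I would argue that every eigensection $s\in\HH^0(X,mL)_k$ with $m\tau_-\leq k\leq m\tau_+$ vanishes along $\Bss_{i(\tau_-)}^+\cup\Bss_{j(\tau_+)}^-$. Indeed, such an $s$ vanishes to order $k\geq m\tau_-\geq ma_i$ along $Y_i$ for every $a_i\leq\tau_-$; but a section vanishing along $Y_i$ to order $>0$ must vanish along the whole BB-cell closure $B_i^+$, since $B_i^+$ is swept out by orbit closures limiting to $Y_i$ at $t\to0$ and an eigensection that is zero at the source-type limit of such an orbit and has the ``wrong'' weight is forced to vanish identically on the orbit closure — this is again an application of the AM vs.\ FM weight inequality (Lemma~\ref{lem:AMFM}) along each $1$-dimensional orbit, combined with equalization. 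Symmetrically $s$ vanishes along $B_j^-$ for every $a_j\geq\tau_+$. Hence the common zero locus of the whole eigenspace contains $\bigcup_{a_i\leq\tau_-}B_i^+\cup\bigcup_{a_j\geq\tau_+}B_j^-=\Bss_{i(\tau_-)j(\tau_+)}$, using the reformulation of $\cB_{i(\tau_-)j(\tau_+)}$ recorded at the end of Notation~\ref{not:linebundles2}.

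With the statement on $X$ in hand, the second formula follows formally. By Lemma~\ref{lem:projections2}, $\beta^*$ identifies $\HH^0(X^\flat,mL(\tau_-,\tau_+))$ with $\bigoplus_{k=m\tau_-}^{m\tau_+}\HH^0(X,mL)_k$, compatibly with evaluation away from the exceptional divisors $Y_0^\flat,Y_r^\flat$. Therefore $\Bs(mL(\tau_-,\tau_+))\cap(X^\flat\setminus(Y_0^\flat\cup Y_r^\flat))$ is exactly $\beta^{-1}$ of $\Bs(\bigoplus_k\HH^0(X,mL)_k)\setminus(Y_0\cup Y_r)=\Bss_{i(\tau_-)j(\tau_+)}\setminus(Y_0\cup Y_r)$, whose closure in $X^\flat$ is by definition $\Bss^\flat_{i(\tau_-)j(\tau_+)}$. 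It remains to check that no extra base locus hides inside $Y_0^\flat\cup Y_r^\flat$: for this I would examine the eigensections of extreme admissible weight. If $m\tau_->0$ then every section in the eigenspace vanishes along $Y_0^\flat$, so $Y_0^\flat\subseteq\Bss^\flat_{i(\tau_-)j(\tau_+)}$ is consistent; if $\tau_-=0$, then the weight-$0$ eigensections are pulled back from $\C^*$-invariant sections of $mL$ not required to vanish at $Y_0$, and by very ampleness these already separate points of $Y_0^\flat$ over the locus outside $\Bss$, giving the claim. The symmetric discussion handles $Y_r^\flat$.

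The main obstacle I anticipate is the reverse inclusion on $X$: making rigorous the claim that an eigensection vanishing along $Y_i$ necessarily vanishes on all of $B_i^+$, and pinning down exactly which critical values contribute (i.e.\ that the cutoff is governed by $a_i\leq\tau_-$ and not $a_i<\tau_-$, and dually for $\tau_+$). This is where the precise definitions~(\ref{eq:ij}) of $i(\tau_-),j(\tau_+)$ must be matched carefully against the vanishing orders $ma_i\leq k\leq m\delta-ma_j$, and where the equalization hypothesis and the AM vs.\ FM formula do the real work; the rest is bookkeeping with the weight decomposition and the already-proven Lemma~\ref{lem:projections2}.
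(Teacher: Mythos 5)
Your overall architecture --- establish the equality on $X$ first, then transport it to $X^\flat$ via the pullback isomorphism of Lemma \ref{lem:projections2} --- matches the paper's, and your argument for $\Bss_{i(\tau_-)j(\tau_+)}\subseteq\Bs(\cdot)$ (eigensections of weight $k>ma_i$ restrict to zero on every orbit closure limiting to $Y_i$, hence vanish on all of $B_i^+$) is sound. But you have mislocated the difficulty. The direction you call ``easy'', namely $\Bs(\cdot)\subseteq\Bss_{i(\tau_-)j(\tau_+)}$, is where the real content lies, and your argument for it has a gap: decomposing a section of $mL$ that is nonzero at $x$ produces an eigensection nonzero at $x$ of \emph{some} weight in the weight-support of $x$, but nothing in your sketch forces that weight to lie in $[m\tau_-,m\tau_+]$. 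The fact you invoke (that the weights of eigensections nonvanishing at $x$ lie between $m\mu_L(\lim_{t\to\infty}tx)$ and $m\mu_L(\lim_{t\to 0}tx)$) bounds the support from outside; it does not exclude a point whose support has a gap containing the whole window $[m\tau_-,m\tau_+]$ --- precisely the case of a point whose orbit runs from a component of weight $<\tau_-$ to one of weight $>\tau_+$. To rule this out one restricts to the orbit closure $C$ through $x$ and uses $\HH^1(X,mL\otimes I_C)=0$ for $m\gg 0$ to get surjectivity of $\HH^0(X,mL)_k\to\HH^0(C,mL_{|C})_k$ for every $k$ between the weights of the two fixed points of $C$ (equalization guaranteeing that these weights are all the consecutive integers in that range). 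This is exactly the mechanism the paper deploys, and it is the step missing from your proposal.

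Your treatment of the exceptional divisors in the second equality also contains a false claim: for $m\tau_->0$ you assert that every section of the eigenspace vanishes along $Y_0^\flat$, so that $Y_0^\flat\subseteq\Bss^\flat_{i(\tau_-)j(\tau_+)}$. Neither statement holds. By definition $\Bss^\flat_{ij}$ is the closure of $\beta^{-1}(\Bss_{ij}\setminus(Y_0\cup Y_r))$, so for instance when $\tau_-\in(0,a_1)$ the set $\Bss^+_{i(\tau_-)}=\Bss_0^+=Y_0$ contributes nothing to $\Bss^\flat$, and one must show that $Y_0^\flat$ is \emph{not} in the base locus (consistently with $L(\tau_-,\tau_+)$ being movable, indeed close to the nef class $\beta^*L$). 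Moreover, while the weight-$k$ sections of $mL$ do vanish to order $k\geq m\tau_-$ along $Y_0$, the corresponding sections of $mL(\tau_-,\tau_+)=m\beta^*L-m\tau_-Y_0^\flat-\cdots$ have had exactly $m\tau_-$ orders of vanishing removed, so the weight-$m\tau_-$ ones generically do not vanish on $Y_0^\flat$; the question is at which points of $Y_0^\flat$ they all vanish. The paper settles this by the same orbit-closure device: for $y\in (Y_0^\flat\cup Y_r^\flat)\setminus\Bss^\flat_{i(\tau_-)j(\tau_+)}$ one takes the unique orbit closure $C$ with extremal point $y$, uses the $\HH^1$-vanishing above to surject $\HH^0(X^\flat,mL(\tau_-,\tau_+))$ onto $\HH^0(C,mL(\tau_-,\tau_+)_{|C})$, and concludes from the positivity of $mL(\tau_-,\tau_+)$ on $C$ that some section does not vanish at $y$. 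Without this argument (or a substitute for it) both halves of your proof remain incomplete at exactly the points where the statement has content.
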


\begin{proof}
The first equality is obtained from the fact that, by definition, $$\Bs\left(\bigoplus_{k=\tau_-}^{\tau_+}\HH^0(X,mL)_k\right)=X\cap \mathbb{P}\left(\bigoplus_{\substack{k<\tau_- \text{ or }  k>\tau_+}}\HH^0(X,mL)_k\right). $$ 
 
For the second equality, in view of Lemma \ref{lem:projections2}, we are left to check that any point  $y \in (Y^\flat_0\cup Y^\flat_r)\setminus \Bss^\flat_{i(\tau_-)j(\tau_+)}$ does not belong to $\Bs\left(mL(\tau_-,\tau_+)\right)$.\\ In order to do so, we assume, for instance, that $y\in Y^\flat_0\setminus \Bss^\flat_{i(\tau_-)j(\tau_+)}$ and let $C\subset X^\flat$ be the closure of the unique $1$-dimensional $\C^*$-orbit having $y$ as sink. Denoting by $C'$ the strict transform of $C$ in $X$, and being $m\gg 0$, we may assume that $\HH^1(X,mL\otimes I_{C'})=0$, so that the $\C^*$-equivariant morphism
$$
\HH^0(X,mL)\to \HH^0(X,mL_{|C'})
$$ 
is surjective. In particular the surjectivity is inherited by the corresponding $\C^*$-eigenspaces, so that we have a surjective map:
$$
\bigoplus_{k=m\tau_-}^{m\tau_+}\HH^0(X,mL)_k\to \bigoplus_{k=m\tau_-}^{m\tau_+}\HH^0(C',mL_{|C'})_k.
$$
Applying now Lemma \ref{lem:projections2} we have a surjective map:
$$
\HH^0(X^\flat,mL(\tau_-,\tau_+))\to \HH^0(C,mL(\tau_-,\tau_+)_{|C}).
$$
By the choice of $y$, $mL(\tau_-,\tau_+)$ has positive degree on $C$, and the result follows.
\end{proof}

%%%%%%%%%%%%%%%%%%%%%%%%%%%%%%%%

\subsection{GIT-quotients of torus actions}\label{ssec:BBS}

Following  Mumford's Geometric Invariant Theory (GIT), given a reductive group $G$ acting on a variety $X$, one may consider the problem of describing all the possible proper geometric and semi-geometric quotients of $G$-invariant open subsets of $X$. In the case in which $X$ is normal and proper, and $G=\C^*$  the problem was treated in \cite{BBS1}, and the solution was written in terms of the ordered set of fixed point components of $X$. 

Although we will not need the results of \cite{BBS1} in full generality, we recall the description introduced there, since it provides a very clear geometric insight on the construction of the quotients we will work with. 

Let $X$ be a proper normal complex algebraic variety  with a nontrivial action of $\C^*$. As shown in \cite{BBS1}, there exists a unique partial order $\preceq$ on $\cY$ satisfying that $Y\preceq Y'$ if $X^+(Y')\cap X^-(Y)\neq \emptyset$, that is, if there exists an orbit converging to a point of $Y$ when $t$ goes to $\infty$, and to a point of $Y'$ when $t$ goes to $0$. Note that we have deliberately inverted the order appearing in \cite{BBS1}, in order to make it compatible with the weights of the fixed point components with respect to positive line bundles: in fact, if $L$ is a nef $\Q$-divisor in $X$, the AM vs. FM formula (\cite[Corollary~2.3]{RW}) $Y\preceq Y'$ implies that $\mu_L(Y)\leq \mu_L(Y')$.

\begin{definition}\label{def:semisection}
A {\it semi-section} of the action is a partition $\cY=\cY_-\sqcup \cY_0\sqcup\cY_+$ satisfying that:
$$
\mbox{if }\,\,Y\in \cY_-\cup \cY_0\,\,\mbox{ and }\,\, Y'\preceq Y\mbox{,\,\, then }\,\,Y'\in  \cY_-.
$$
A {\it section} of the action is a semi-section such that $\cY_0=\emptyset$ and $\cY_-,\cY_+\neq \emptyset$.
\end{definition}

The following statement is a reformulation of \cite[Theorem~2.1]{BBS1}:

\begin{proposition}\label{prop:semisection}
Let $X$ be a normal projective variety admitting a $\C^*$-action as above. Given a semi-section $\cY=\cY_-\sqcup \cY_0\sqcup\cY_+$ of the action, and denoting by $U$ the open set $X\setminus \big(\bigcup_{Y\in \cY_+}X^-(Y)\cup \bigcup_{Y\in \cY_-}X^+(Y)\big)$, there exists a semi-geometric quotient  of $U$ by the induced action of $\C^*$. Furthermore, if the semi-section is a section, then the quotient of $U$ by $\C^*$ is geometric.
\end{proposition}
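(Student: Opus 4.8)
The plan is to reduce the statement to the already-established result \cite[Theorem~2.1]{BBS1} by unwinding the translation between the notion of semi-section used there and the reformulation given here. First I would recall precisely the data that \cite{BBS1} attaches to a quotient: roughly speaking, a choice of which fixed point components are ``below'' and which are ``above'' a separating hyperplane, subject to a compatibility condition with the order $\preceq$. The key observation is purely formal: our definition of a semi-section $\cY=\cY_-\sqcup\cY_0\sqcup\cY_+$ is exactly a down-set/up-set partition for the (reversed) order $\preceq$, because the condition ``if $Y\in\cY_-\cup\cY_0$ and $Y'\preceq Y$ then $Y'\in\cY_-$'' says that $\cY_-$ is closed downward and, by contraposition, $\cY_+$ is closed upward, with $\cY_0$ an ``incomparable middle layer'' that may be assigned to either side without violating the closure conditions. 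Matching this against the combinatorial input of \cite[Theorem~2.1]{BBS1} (after accounting for the deliberate order reversal announced before Definition \ref{def:semisection}) identifies our semi-sections with the data classifying semi-geometric quotients there.

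Next I would identify the open set. In \cite{BBS1} the semistable locus associated to the chosen partition is obtained by removing, from $X$, the cells that ``escape'' on the wrong side: concretely one removes $X^-(Y)$ for $Y$ on the source side and $X^+(Y)$ for $Y$ on the sink side. This is literally the set $U=X\setminus\bigl(\bigcup_{Y\in\cY_+}X^-(Y)\cup\bigcup_{Y\in\cY_-}X^+(Y)\bigr)$ in our statement; one should check that $U$ is $\C^*$-invariant and open (invariance is clear since each $X^\pm(Y)$ is invariant, openness because each $X^\pm(Y)$ is locally closed and, crucially, the collection being removed is a union of \emph{closed} pieces by the compatibility of the partition with $\preceq$ — a point in $X^-(Y)$ with $Y\in\cY_+$ has its whole orbit closure's sink/source constrained by the order, so no point of $U$ lies in the closure of a removed cell). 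Granting the identification of the combinatorial data and of $U$, the existence of a semi-geometric quotient $U\to U/\!\!/\C^*$ as a proper variety is exactly the content of \cite[Theorem~2.1]{BBS1}.

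Finally, for the ``furthermore'' clause, when the semi-section is a section we have $\cY_0=\emptyset$, so every fixed point component lies strictly on one side; in this case no fixed point of the action survives in $U$ (a fixed point $Y$ itself would have to lie in $\cY_-$ or $\cY_+$, but $Y\subset X^+(Y)$ and $Y\subset X^-(Y)$, so $Y$ is removed in either case), and more generally the ordering constraint forces all $\C^*$-orbits in $U$ to be closed and one-dimensional with trivial-enough stabilizers for the quotient to be an orbit space. This is again precisely the geometricity assertion in \cite[Theorem~2.1]{BBS1}, and I would simply cite it, noting that the condition $\cY_-,\cY_+\neq\emptyset$ guarantees $U\neq\emptyset$ so that the quotient is nonempty.

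The main obstacle I anticipate is not mathematical depth but bookkeeping: making the order-reversal convention and the precise combinatorial setup of \cite{BBS1} line up so that ``semi-section'' here matches their classifying data verbatim, and checking carefully that the set $U$ written above coincides with their semistable locus (in particular that openness survives the reversal). Once that dictionary is in place the proposition is a direct quotation of \cite[Theorem~2.1]{BBS1}.
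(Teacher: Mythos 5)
Your proposal is correct and follows the same route as the paper: the paper offers no independent argument for this proposition, explicitly presenting it as a reformulation of \cite[Theorem~2.1]{BBS1}, which is exactly the reduction-by-dictionary you carry out. The extra bookkeeping you describe (order reversal, identification of $U$ with the semistable locus, closedness of the removed union) is precisely the translation the paper leaves implicit.
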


In this paper we will consider only a certain type of sections and semi-sections, whose quotients will not only be proper, but projective, since they will be standard GIT quotients of $X$. The construction is the following. 

\begin{construction}\label{cons:sections} Let $(X,L)$ be a $\Q$-polarized pair with a nontrivial $\C^*$-action, and denote by $0=a_0<\dots<a_r=\delta$  
the critical values of the action. We obtain a semi-section (respectively a section) of the action choosing an index $i\in\{0,\dots ,r\}$ (resp. $i\in\{0,\dots ,r-1\}$), and setting 

$$\begin{array}{l}\cY_-:=\{Y\in\cY|\,\, \mu_L(Y)<a_i\},\quad\cY_0:=\{Y\in\cY|\,\, \mu_L(Y)=a_i\},\\[2pt]\cY_+:=\{Y\in\cY|\,\, \mu_L(Y)>a_i\},\end{array}$$ 

\noindent (resp. $\cY_-:=\{Y\in\cY|\,\, \mu_L(Y)\leq a_i\}$, $\cY_+:=\{Y\in\cY|\,\, \mu_L(Y)\geq a_{i+1}\}$). Let us denote by $X^{\ss}(i,i)$ (resp. $X^{\ss}(i,i+1)$) the open set $X\setminus \big(\bigcup_{Y\in \cY_-}X^+(Y)\cup \bigcup_{Y\in \cY_+}X^-(Y)\big)$, and by $\GX(i,i)$ (resp. $\GX(i,i+1)$) the corresponding proper semi-geometric (resp. geometric) quotients. See Figure \ref{fig:sinksource} below.
\begin{figure}[h!]
\includegraphics[width=12cm]{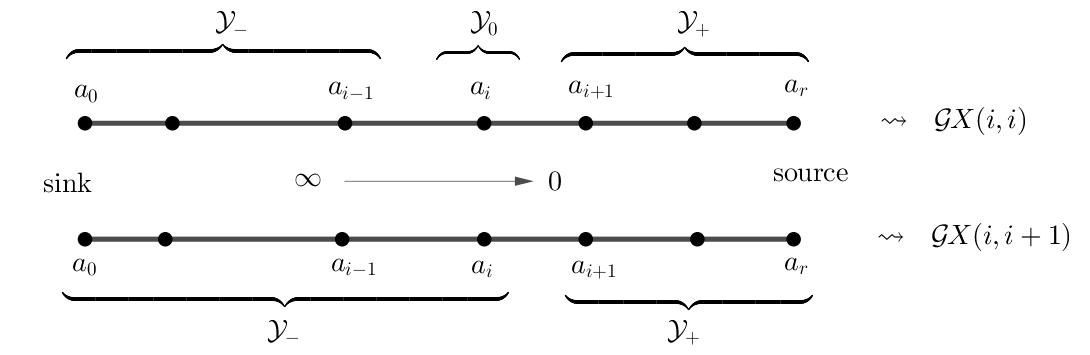}
\caption{Weight representation of the semi-geometric and geometric GIT quotients of $X$.}
\label{fig:sinksource}
\end{figure}
\end{construction}

\begin{definition}\label{def:Atau}
For every rational number $\tau\in\QQ\cap[0,\delta]$, we define a graded algebra and a homogeneous ideal 
$$I_\tau:=\bigoplus_{\substack{m>0\\m\tau\in\Z}}\HH^0(X,mL)_{m\tau}\subset A_\tau:=\bigoplus_{\substack{m\geq 0\\m\tau\in\Z}}\HH^0(X,mL)_{m\tau}\subset A:=\bigoplus_{\substack{m\geq 0}}\HH^0(X,mL),$$ 
where the subindex $m\tau$ denotes the direct summand of $\HH^0(X,mL)$ of $\C^*$-weight equal to $m\tau$, and $\HH^0(X,mL)$ is set to be zero if $mL\notin\Pic(X)$. 
\end{definition}

The construction of the varieties $\GX(i,i)$, $\GX(i,i+1)$ can be then described in terms of the original linearization of $L$ by means of Mumford's Geometric Invariant Theory (cf. \cite[Amplification~1.11, p.40]{MFK}), as follows. 

\begin{proposition}\label{prop:semisection2}
Let $(X,L)$ be a $\Q$-polarized pair with a nontrivial $\C^*$-action with  critical values $0=a_0<\dots<a_r=\delta$. Then:
\begin{itemize}
\item for every $i=0,\dots,r$, we have $$X^{\ss}(i,i)=X\setminus V(I_{a_i}\otimes_{A_{a_i}} A),\quad\GX(i,i)=\Proj(A_{a_i});$$
\item for every $i=0,\dots,r-1$, and every $\tau\in(a_i,a_{i+1})\cap \Q$, we have $$X^{\ss}(i,i+1)=X\setminus V(I_{\tau}\otimes_{A_\tau} A),\quad\GX(i,i+1)=\Proj(A_\tau).$$
\end{itemize}
\end{proposition}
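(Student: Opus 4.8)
The plan is to identify both GIT quotients appearing in Proposition \ref{prop:semisection2} with the $\Proj$ of the relevant graded algebras by appealing directly to Mumford's description of GIT quotients for a $\CC^*$-action with a chosen linearization, and then to match Mumford's semistable locus with the Bia{\l}ynicki-Birula description of $X^{\ss}(i,i)$ and $X^{\ss}(i,i+1)$ given in Construction \ref{cons:sections}.

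First I would recall the classical picture: once a $\CC^*$-linearization of an ample $L$ is fixed, by \cite[Amplification~1.11]{MFK} the set of semistable points $X^{\ss}(L)$ is $X\setminus V\big(\bigoplus_{m>0}\HH^0(X,mL)^{\CC^*}\big)$, and the GIT quotient is $\Proj\big(\bigoplus_{m\geq 0}\HH^0(X,mL)^{\CC^*}\big)$. The key observation is that shifting the linearization of $L$ by an integer character replaces $\HH^0(X,mL)^{\CC^*}$ by the weight-$m\tau$ eigenspace $\HH^0(X,mL)_{m\tau}$ for an integral $\tau$; passing to a rational $\tau$ and replacing $L$ by a multiple accounts for all the $\QQ$-linearizations. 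Thus for $\tau\in\QQ\cap[0,\delta]$, the ``$\tau$-semistable locus'' is exactly $X\setminus V(I_\tau\otimes_{A_\tau}A)$ and the quotient is $\Proj(A_\tau)$, which is projective because $A_\tau$ is a finitely generated graded algebra (being a Veronese-type graded piece of the finitely generated $A$). This already establishes the $\Proj$ formulas; what remains is the identification of this semistable locus with the BB-description.

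Next I would carry out the geometric identification of the semistable loci. A point $x\in X$ is $\tau$-semistable (for the $\tau$-shifted linearization) precisely when the closure of its orbit in the affine cone does not meet the zero section with the ``wrong weight''; concretely, $x$ fails to be $\tau$-semistable if and only if both limits $\lim_{t\to 0}tx$ and $\lim_{t\to\infty}tx$ lie strictly on one side of the value $\tau$, i.e. either $\mu_L(\lim_{t\to 0}tx)>\tau$ or $\mu_L(\lim_{t\to\infty}tx)<\tau$ (here I use the AM vs. FM inequality, recalled before Definition \ref{def:semisection}, to compare $\mu_L$ of the two limits). When $\tau=a_i$ this is the condition $x\in \bigcup_{Y\in\cY_-}X^+(Y)\cup\bigcup_{Y\in\cY_+}X^-(Y)$ with $\cY_\pm$ as in the semi-section case, giving $X^{\ss}(i,i)$; when $\tau\in(a_i,a_{i+1})$ the strict inequalities become $\mu_L(\lim_{t\to0}tx)\geq a_{i+1}$ or $\mu_L(\lim_{t\to\infty}tx)\leq a_i$, giving $X^{\ss}(i,i+1)$. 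For the algebraic side, a section $s\in\HH^0(X,mL)_{m\tau}$ is nonzero at $x$ only if the closure of the orbit of $x$ carries a point with weight $\geq m\tau$ at one end and $\leq m\tau$ at the other (by the local description of weight eigenspaces, cf. the use of \cite[Lemma~2.17]{BWW} and \cite[Lemma~2.17]{BWW} in the proof of Lemma \ref{lem:projections2}), so $V(I_\tau\otimes_{A_\tau}A)$ is exactly the complement just described; conversely, for $x$ in the complement one produces a section in some $\HH^0(X,mL)_{m\tau}$ nonvanishing at $x$ by the same surjectivity-onto-eigenspaces argument used in the proof of Corollary \ref{cor:BSL} (restrict to an invariant curve through $x$ and lift).

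The main obstacle I expect is this last surjectivity/separation step — making rigorous, for arbitrary $\CC^*$-semistable $x$, the production of an invariant section of the correct weight that is nonzero at $x$, uniformly in $x$. This is precisely the content of GIT (the semistable locus is covered by the nonvanishing loci of invariant sections), but here one must track the extra grading, so the cleanest route is to reduce to a point of the closure of the orbit of $x$ lying in a fixed component $Y$ with $\mu_L(Y)$ straddling $m\tau$ appropriately, and invoke the local model $X^\pm(Y)\simeq\cN^\pm(Y)$ together with \cite[Lemma~2.17]{BWW}, exactly as in the proof of Corollary \ref{cor:BSL}; for large $m$ the vanishing $\HH^1(X,mL\otimes I_{C'})=0$ on a suitable invariant curve $C'$ then gives the required section. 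Everything else — finite generation of $A_\tau$, properness/projectivity of $\Proj(A_\tau)$, and the bookkeeping of $\cY_-,\cY_0,\cY_+$ against the sign of $\mu_L$ of the limits — is routine.
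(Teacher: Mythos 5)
Your proposal is correct and follows essentially the same route as the paper, which gives no argument for Proposition \ref{prop:semisection2} beyond the citation of \cite[Amplification~1.11, p.40]{MFK}: shift the linearization by (fractional) characters so that the invariant sections become the weight-$m\tau$ eigenspaces, and match the Hilbert--Mumford unstable locus with the Bia{\l}ynicki-Birula cells of Construction \ref{cons:sections}. One caution: in your ``i.e.''\ clause the two limits are interchanged --- instability at $\tau$ means $\mu_L(\lim_{t\to 0}tx)<\tau$ or $\mu_L(\lim_{t\to\infty}tx)>\tau$ (the weight interval of the orbit closure, whose upper end is the limit at $0$ and lower end the limit at $\infty$, misses $\tau$) --- although the identifications with $X^{\ss}(i,i)$ and $X^{\ss}(i,i+1)$ that you then write down are the correct ones.
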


\begin{definition}\label{def:quotients}
In the above situation, we will call the varieties $\GX(i,i)$, $\GX(i,i+1)$ the {\em GIT-quotients of the pair $(X,L)$ by the action of $\C^*$}.
\end{definition}

\begin{remark}\label{rem:extremal-semigeometric}
Note that, since the intersection of all the open sets $X^{\ss}(i,i+1)$ is nonempty, the varieties $\GX(i,i+1)$ are birationally equivalent. Moreover, the natural birational maps among them fit in the following commutative diagram, whose diagonal arrows are contractions:

$$
\begin{tikzcd}[
  column sep={3.9em,between origins},
  row sep={3.5em,between origins},
]
&\GX(0,1) \arrow[rr,dashed] \arrow[rd] \arrow[dl]&&\GX(1,2) \arrow[dl]&\dots\ \ &\GX(r\!-\!2,r\!-\!1) \arrow[rr,dashed] \arrow[rd] &&\GX(r\!-\!1,r)\arrow[rd] \arrow[dl]&\\
\GX(0,0)&&\GX(1,1)&\dots&&\dots\ \ &\GX(r\!-\!1,r\!-\!1)&&\GX(r,r)
\end{tikzcd}
$$ 
Note also that, by construction, $\GX(0,0)=Y_0$, $\GX(r,r)=Y_r$, and, since $X^{\pm}(Y)\simeq \cN^{\pm} (Y)$ for every $Y\in \cY$, the fibers of the diagonal morphisms are weighted projective spaces (standard projective spaces if the action is equalized). 
\qed
\end{remark}

We observe also that the equalization hypothesis implies that the geometric quotients of $X$ are $\C^*$-principal bundles.

\begin{lemma}\label{lem:principal}
Let $(X,L)$ be a $\Q$-polarized pair with a nontrivial equalized $\C^*$-action with  critical values $0=a_0<\dots<a_r=\delta$. The geometric quotients $\pi_{i,i+1}:X^{\ss}(i,i+1)\to\GX(i,i+1)$, $i=0,\dots,r-1$, are $\C^*$-principal bundles. In particular, if $X$ is smooth, then its geometric quotients by $\C^*$ are smooth.
\end{lemma}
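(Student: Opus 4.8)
The plan is to show that the $\C^*$-action on each open set $X^{\ss}(i,i+1)$ is free, from which it follows by the étale slice theorem (or by standard descent for free actions of affine algebraic groups, cf.\ \cite{MFK}) that the geometric quotient $\pi_{i,i+1}\colon X^{\ss}(i,i+1)\to\GX(i,i+1)$ is a $\C^*$-principal bundle; the smoothness of $\GX(i,i+1)$ when $X$ is smooth is then immediate, since a quotient of a smooth variety by a free action of a smooth group is smooth (the quotient map is faithfully flat and smooth). So everything reduces to the freeness of the action on the stable locus.

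First I would recall from Construction \ref{cons:sections} that, for $\tau\in(a_i,a_{i+1})\cap\Q$, the open set $X^{\ss}(i,i+1)$ is obtained from $X$ by removing $\bigcup_{\mu_L(Y)\le a_i}X^{+}(Y)\cup\bigcup_{\mu_L(Y)\ge a_{i+1}}X^{-}(Y)$; equivalently, a point $x\in X$ lies in $X^{\ss}(i,i+1)$ if and only if $\mu_L(\lim_{t\to0}tx)<\tau<\mu_L(\lim_{t\to\infty}tx)$, so in particular $\lim_{t\to0}tx\ne\lim_{t\to\infty}tx$ and hence $x$ is \emph{not} a fixed point. Thus $X^{\ss}(i,i+1)$ meets $X^{\C^*}$ in the empty set, and every $x\in X^{\ss}(i,i+1)$ lies on the closure of a genuine $1$-dimensional orbit. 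Now I would invoke the equalization hypothesis directly: by definition the action is equalized at every $Y\in\cY$, meaning that every point of $\big(X^{-}(Y)\cup X^{+}(Y)\big)\setminus Y$ has trivial isotropy. Since $x\in X^{\ss}(i,i+1)$ is a non-fixed point, writing $Y:=\lim_{t\to0}tx\in\cY$ we have $x\in X^{+}(Y)\setminus Y$, so the isotropy group of $\C^*$ at $x$ is trivial. As $x$ was arbitrary, the induced $\C^*$-action on $X^{\ss}(i,i+1)$ is free.

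It remains to package this into the principal-bundle statement. Since $\GX(i,i+1)$ is, by Proposition \ref{prop:semisection2}, an honest (projective) geometric GIT quotient and the action on the stable locus $X^{\ss}(i,i+1)$ is free, the quotient morphism $\pi_{i,i+1}$ is a $\C^*$-torsor: one can cite \cite[Proposition 0.9 and Converse 1.12]{MFK}, or observe that a free action of the linearly reductive affine group $\C^*$ on a variety, with a geometric quotient that is a variety, is automatically a principal bundle in the étale topology (Luna slices give local triviality). Finally, if $X$ is smooth then so is the open subset $X^{\ss}(i,i+1)$, and a principal $\C^*$-bundle over a variety is smooth over its base, hence $\GX(i,i+1)$ is smooth.

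I do not expect a genuine obstacle here: the content is essentially a bookkeeping unwinding of the definition of $X^{\ss}(i,i+1)$ together with the definition of "equalized". The only point requiring a little care is the passage from "free action with geometric quotient" to "principal bundle"; if one wishes to avoid quoting Luna's slice theorem, an alternative is to note that $X^{\ss}(i,i+1)$ is covered by $\C^*$-invariant affine opens of the form $X_f$ (non-vanishing loci of invariant sections of powers of $L$, as in Proposition \ref{prop:semisection2}), on each of which the free action of the reductive group $\C^*$ on an affine variety admits an affine geometric quotient and is a principal bundle by \cite[Proposition 0.9]{MFK}, and then glue.
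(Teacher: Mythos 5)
Your proposal is correct and follows essentially the same route as the paper: the paper likewise reduces to observing that the equalization hypothesis gives trivial stabilizers on the stable locus and then invokes Luna's slice theorem over a cover of $\GX(i,i+1)$ by affine opens with affine $\C^*$-invariant preimages (exactly the alternative you sketch in your last paragraph). Your unwinding of why points of $X^{\ss}(i,i+1)$ are non-fixed and hence have trivial isotropy is just a more explicit version of the paper's citation of Lemma \ref{lem:equalized}.
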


\begin{proof}
We take an affine open covering $\{U_i\}$ of $\GX(i,i+1)$ such that the inverse image of every $U_i$ is an affine $\C^*$-scheme. It is enough to show that $\pi_i^{-1}(U_i)\to U_i$ is a $\C^*$-principal bundle for every $i$, and since by Lemma \ref{lem:equalized} we know that the action of $\C^*$ has trivial stabilizers, this is a Corollary of Luna Slice Theorem (see \cite[Corollary~in~p.199]{MFK}).
\end{proof}

In particular, one may construct a different compactification of $X^{\ss}(i,i+1)$ by considering the natural action of $\C^*$ on $\P^1$ and considering the variety:
$$
\widehat{X}(i,i+1):=X^{\ss}(i,i+1)\times^{\C^*}\P^1=X^{\ss}(i,i+1)\times\P^1/\sim,
$$
where $(x,\lambda)\sim (x',\lambda')$ if and only if $x'=tx$, $\lambda'=t\lambda$ for some $t\in\C^*$. 
Intuitively $\widehat{X}(i,i+1)$ is constructed by adding to $X^{\ss}(i,i+1)$ two sections corresponding to the limit points of the action when $t$ goes to $0$ and infinity. The variety $\widehat{X}(i,i+1)$ is  a $\P^1$-bundle, projectivization of a decomposable rank two vector bundle on $\GX(i,i+1)$, and it is birationally equivalent to $X$ by construction. The following statement describes when the natural map $X\dashrightarrow \widehat{X}(i,i+1)$ is a small modification:

\begin{proposition}\label{prop:P1model} 
Let $(X,L)$ be a $\Q$-polarized pair with a nontrivial equalized B-type $\C^*$-action with  critical values $0=a_0<\dots<a_r=\delta$, and let $i\in\{0,\dots,r-1\}$ satisfy:
$$\begin{array}{l}
\nu^-(Y)>1 \mbox{ for every inner fixed component } Y\mbox{ s.t }\,\mu_L(Y)\leq a_i,\\[2pt]
\nu^+(Y)>1 \mbox{ for every inner fixed component } Y\mbox{ s.t }\,\mu_L(Y)\geq a_{i+1}.
\end{array}\eqno{(\dagger)}
$$
Then $X$ is $\C^*$-equivariantly isomorphic in codimension one to 
$\widehat{X}(i,i+1)$. 
\end{proposition}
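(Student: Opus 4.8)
The plan is to analyze directly the natural birational map $\phi\colon X\dashrightarrow\widehat{X}(i,i+1)$ and to show that both $\phi$ and $\phi^{-1}$ are isomorphisms away from closed subsets of codimension at least two, in a $\C^*$-equivariant way. Recall that $\widehat{X}(i,i+1)=X^{\ss}(i,i+1)\times^{\C^*}\P^1$ carries the residual $\C^*$-action inherited from the factor $\P^1$; by construction it contains $X^{\ss}(i,i+1)$ as the dense open locus lying over $\P^1\setminus\{0,\infty\}$, and its complement is the disjoint union of two smooth divisors $S_0,S_\infty$ --- each a copy of $\GX(i,i+1)$ --- which are the sink and the source of that action. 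Under the identification furnished by Lemma \ref{lem:principal}, $\phi$ restricts to a $\C^*$-equivariant isomorphism on $X^{\ss}(i,i+1)$, so on the target side it only remains to recover dense opens of $S_0$ and $S_\infty$.

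On the source side, Construction \ref{cons:sections} gives $X\setminus X^{\ss}(i,i+1)=\bigcup_{\mu_L(Y)\le a_i}X^+(Y)\cup\bigcup_{\mu_L(Y)\ge a_{i+1}}X^-(Y)$. Since the action is of B-type, $\nu^+(Y_0)=\nu^-(Y_r)=0$, whence $X^+(Y_0)=Y_0$ and $X^-(Y_r)=Y_r$ are divisors; for an inner component $Y$ one has $\codim_X X^+(Y)=\nu^-(Y)$ and $\codim_X X^-(Y)=\nu^+(Y)$, and these are $\ge 2$ on exactly the relevant ranges of weights by hypothesis $(\dagger)$. Thus the codimension-one part of $X\setminus X^{\ss}(i,i+1)$ is precisely $Y_0\cup Y_r$, and it suffices to extend $\phi$ to an isomorphism over a dense open subset of each of $Y_0$ and $Y_r$.

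The heart of the argument is the local analysis at $Y_0$ (the one at $Y_r$ being symmetric, with $X^+(Y_r)\cong\cN^+(Y_r)$ in place of $X^-(Y_0)\cong\cN^-(Y_0)$ and with the other half of $(\dagger)$). By the Bia{\l}ynicki-Birula theorem the open cell $X^-(Y_0)$ is a neighbourhood of $Y_0$ which is $\C^*$-equivariantly the total space of the line bundle $\cN^-(Y_0)$, with $\C^*$ scaling the fibres with weight $-1$. As each fibre minus its origin is a single $\C^*$-orbit, for every inner $Y$ with $\mu_L(Y)\le a_i$ the set $X^+(Y)\cap X^-(Y_0)$ is the restriction of $\cN^-(Y_0)\setminus Y_0$ to a closed subset of $Y_0$ of codimension $\nu^-(Y)\ge 2$; deleting these finitely many subsets produces a dense open $U_0\subset Y_0$, with complement of codimension $\ge 2$, such that $\cN^-(Y_0)|_{U_0}\setminus U_0\subset X^{\ss}(i,i+1)$. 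Via the quotient map of Lemma \ref{lem:principal}, the $\C^*$-bundle $\cN^-(Y_0)|_{U_0}\setminus U_0\to U_0$ is identified with the restriction of $X^{\ss}(i,i+1)\to\GX(i,i+1)$ to an open subset $W_0\cong U_0$; passing to associated line bundles and adding back zero sections then exhibits $\cN^-(Y_0)|_{U_0}$ --- an open subset of $X$ containing $U_0$ --- as an open subset of $\widehat{X}(i,i+1)$ meeting one of $S_0,S_\infty$ in a dense open, compatibly with $\phi$. Performing the symmetric construction at $Y_r$ extends $\phi$ over a dense open $U_r\subset Y_r$ onto an open subset of $\widehat{X}(i,i+1)$ meeting the remaining section densely. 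Gluing, $\phi$ becomes a $\C^*$-equivariant isomorphism from $X^{\ss}(i,i+1)\cup\cN^-(Y_0)|_{U_0}\cup\cN^+(Y_r)|_{U_r}\subset X$, whose complement has codimension $\ge 2$ by the previous paragraph, onto an open subset of $\widehat{X}(i,i+1)$ containing $X^{\ss}(i,i+1)$ together with dense opens of $S_0$ and $S_\infty$, hence also with complement of codimension $\ge 2$; this is the desired isomorphism in codimension one.

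I expect the main obstacle to be the third step, namely making the identification of the $\C^*$-bundle $\cN^-(Y_0)|_{U_0}\setminus U_0\to U_0$ with a piece of the GIT quotient $X^{\ss}(i,i+1)\to\GX(i,i+1)$ completely rigorous: that $U_0$ does embed as an open subset of $\GX(i,i+1)$, that the resulting isomorphism of open subsets of $X$ and $\widehat{X}(i,i+1)$ genuinely coincides with $\phi$ (and not with $\phi$ composed with an automorphism of the line bundle or with a nontrivial birational self-map), and that $Y_0$ and $Y_r$ are matched with the two \emph{distinct} sections of $\widehat{X}(i,i+1)$ --- the latter requiring one to keep careful track of the $\C^*$-linearization conventions.
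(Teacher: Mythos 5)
Your argument is correct and follows essentially the same route as the paper's proof: the map $\phi$ extends exactly over those points of $Y_0\cup Y_r$ bounding orbits contained in $X^{\ss}(i,i+1)$, and hypothesis $(\dagger)$ forces the remaining locus $\bigcup \overline{X^{+}(Y)}\cup\bigcup\overline{X^{-}(Y)}$ (over the relevant inner components) to have codimension at least two. The paper's proof is just a terser version of this; your extra care with the line-bundle/$\P^1$-bundle identification over $U_0$, $U_r$ and with the complement of the image in $\widehat{X}(i,i+1)$ fills in details the paper leaves implicit.
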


\begin{proof}
Note first that the rational map $X\dashrightarrow \widehat{X}(i,i+1)$ is defined on $X^{\ss}(i,i+1)$, and, by the B-type hypothesis, it is also defined on every point of $Y_0\cup Y_r$ that lies in the boundary of an orbit contained in $X^{\ss}(i,i+1)$. In other words, the map is defined in 
$$
X\setminus \left(\bigcup_{\substack{\mu_L(Y)\leq a_i\\Y\text{ inner}}}{X^{+}(Y)}\cup\bigcup_{\substack{\mu_L(Y)\geq a_{i+1}\\Y\text{ inner}}}{X^{-}(Y)} \right).
$$
Since the hypothesis $(\dagger)$ implies that each of the varieties $\overline{X^{\pm}(Y)}$ has codimension at least two (see \cite[Theorem~4.2]{CARRELL}), it follows that $X\dashrightarrow \widehat{X}(i,i+1)$ is defined in codimension one.
\end{proof}

%!TEX root = WORS3.tex

\section{Small modifications of B-type actions and bordisms}\label{sec:bord}

In this section we consider the case of a $\Q$-polarized pair $(X,D)$, with $X$ smooth, admitting a B-type equalized action, and we will show how to construct, under some assumptions, two $\C^*$-equivariant small $\Q$-factorial modifications of $X$ which are smooth and of smaller criticality, $\SX^-$, $\SX^+$, having indeterminacy locus $B^+_1$, $B^-_{r-1}$, respectively. 
The construction is an extension of the one described in \cite[Section~8]{WORS1} in the particular case of bandwidth three actions. The main result we will show is the following:

\begin{theorem}\label{thm:flip} Let $X$ be a smooth projective variety and $D$ be an ample $\Q$-divisor on $X$, such that $(X,D)$ admits a B-type equalized action of $\C^*$, of criticality $r\geq 2$ and critical values $0,a_1,\dots,a_r$. Assume moreover that $\nu^-(Y)>1$ for every fixed point component $Y$ of weight $a_1$. Then there exists a smooth variety $\SX^-$ together with a B-type equalized $\C^*$-action, and a $\C^*$-equivariant small modification $\psi^-:X\dashrightarrow\SX^-$, such that:
\begin{itemize}
\item the indeterminacy locus of $\psi^-$ is $B^+_1$;
\item  the $\Q$-divisor $D^-_\tau:=\psi^-_*(D-\tau Y_0)$ is ample for every $\tau\in (a_1,a_2)\cap \Q$;
\item $(\SX^-,D^-_\tau)$ has critical values $0,a_2-\tau,\dots,a_r-\tau$, and criticality $r-1$.
\end{itemize}
Moreover, the map $\psi^-$ sends isomorphically the fixed point components in $X$ of weight $a_i$, $i\geq 2$, to the fixed point components of weight $a_i-\tau$ in $\SX^-$, and preserves $\nu^\pm$ on these components.
\end{theorem}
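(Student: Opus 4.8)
The strategy is to build $\SX^-$ explicitly as a contraction of a blowup, mimicking and extending the bandwidth-three construction of \cite[Section~8]{WORS1}. The map $\psi^-$ will factor as $\psi^- = \phi^- \circ (\sigma^-)^{-1}$, where $\sigma^- \colon \widetilde{X} \to X$ is the blowup of $X$ along the closure $B^+_1$ of the Bia{\l}ynicki-Birula cells of the fixed components of weight $a_1$, and $\phi^- \colon \widetilde{X} \to \SX^-$ is a second, \emph{divisorial} contraction of the strict transform of $B^-_1$ (equivalently of the exceptional divisor of the "wrong side" of the blowup). The hypothesis $\nu^-(Y) > 1$ for $Y$ of weight $a_1$ is exactly what guarantees that $\codim B^+_1 \geq 2$, so that the blowup $\sigma^-$ is genuinely birational and not the identity, and that the exceptional divisor $E$ of $\sigma^-$ carries a $\PP^{\nu^+(Y)-1}$-bundle and a $\PP^{\nu^-(Y)-1}$-bundle structure over a modification of $Y$ — the two projectivized normal bundles $V^+(Y)$, $V^-(Y)$ in the notation of \eqref{eq:Vpm}. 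Contracting $E$ along the \emph{other} ruling produces $\SX^-$; this is the standard "flip as blowup-blowdown" picture, and smoothness of $\SX^-$ follows because we are contracting a smooth $\PP^k$-bundle inside a smooth variety along a projective-space fibration whose normal bundle restricts to $\cO(-1)$ on the fibers.

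The $\CC^*$-action on $X$ lifts to $\widetilde{X}$ since the center $B^+_1$ is $\CC^*$-invariant, and it descends to $\SX^-$ since the contracted locus is $\CC^*$-invariant as well; this gives the equivariant $\CC^*$-action on $\SX^-$, and one reads off that $Y_0$, $Y_r$ still have codimension one (they are disjoint from $B^+_1$, since $B^+_1$ consists of orbits limiting to weight-$a_1$ components on one end), so the new action is again of B-type. The equalization of $\SX^-$ is inherited: by Lemma \ref{lem:equalized} it suffices to check that the weights on $\cN^\pm$ of each fixed component of $\SX^-$ are $\pm 1$, and the fixed components of $\SX^-$ are precisely the $Y_i$ with $i \neq 1$ (transported isomorphically, since $\psi^-$ is an isomorphism near them) together with a new fixed component replacing $Y_1$, namely $V^-(Y_1)$ (or its quotient), on which the induced weights are again $\pm 1$ because the blowup and blowdown only reshuffle the $\pm 1$-eigenspaces of the normal bundle. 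This simultaneously gives the final assertion of the theorem: for $i \geq 2$, $\psi^-$ is a local isomorphism around $Y_i$ — indeed $Y_i \cap B^+_1 = \emptyset$ — hence $Y_i$ maps isomorphically to a fixed component of $\SX^-$ and $\nu^\pm$ is preserved verbatim.

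For the divisor statements, I would compute $\psi^-_*(D - \tau Y_0)$ via the identifications of Lemma \ref{lem:projections2} and Corollary \ref{cor:BSL}. Writing $D^-_\tau = \psi^-_*(D - \tau Y_0)$, the key point is that on $\widetilde{X}$ the pullback $\sigma^{-*}(D-\tau Y_0) - cE$ (for the appropriate multiplicity $c$ coming from $\tau \in (a_1,a_2)$) is $\phi^-$-trivial and base-point free, so it descends to an ample divisor on $\SX^-$; Corollary \ref{cor:BSL} identifies its base locus, and the choice $\tau \in (a_1,a_2)$ is precisely the range in which that base locus becomes empty on $\SX^-$ (the "bad" locus $\cB^\flat$ has been removed by the modification). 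Ampleness then follows from the Nakai–Moishezon / Kleiman criterion together with the fact that $D^-_\tau$ is positive on every $\CC^*$-orbit closure — by the AM vs.\ FM formula Lemma \ref{lem:AMFM}, the $D^-_\tau$-degree of an orbit closure is the difference of the new weights, and rescaling weights by subtracting $\tau$ from $a_1,\dots,a_r$ (and dropping $a_1$ altogether, which has been contracted) yields the claimed new critical values $0, a_2-\tau, \dots, a_r - \tau$ and criticality $r-1$. The normalization $a_0 = 0$ is preserved because $\tau > a_1 > a_0 = 0$ forces the new sink weight to stay $0$ after the standard shift; here I would invoke \S\ref{sssec:polar} for the normalization convention.

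**Main obstacle.** The delicate step is verifying that the second contraction $\phi^-$ actually exists as a morphism of \emph{projective} varieties and produces something smooth — i.e., that the strict transform of $B^-_1$ in $\widetilde{X}$ is contractible in the sense of an extremal contraction, with the contracted variety $\SX^-$ being smooth and $\QQ$-factorial. In the bandwidth-three case of \cite{WORS1} this was handled by hand; here one needs it uniformly in $r$. The cleanest route is to exhibit, on $\widetilde{X}$, the $\QQ$-divisor $\sigma^{-*}D - cE$ (suitably chosen), show it is nef and that the face of $\cNE{\widetilde{X}}$ it kills is exactly the ruling of $E$ corresponding to $V^-$, and then apply the contraction theorem; smoothness of the target is the classical statement that contracting a $\PP^k$-subbundle with conormal bundle $\cO(1)$ along its fibration yields a smooth variety (the "inverse blowup" lemma). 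One must also make sure the global geometry is compatible — that $E$ does not meet the other special loci in a way that obstructs the contraction — and this is where the B-type and equalization hypotheses, and the explicit control of $\cB^+_1$, $\cB^-_{r-1}$ from Notation \ref{not:linebundles2}, do the work.
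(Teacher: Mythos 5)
Your proposal follows essentially the same route as the paper's proof: blow up $X$ along $B^+_1$, recognize the exceptional divisor $E$ as a variety with two projective-bundle structures, contract it along the second ruling via the Nakano contractibility criterion to obtain a smooth $\SX^-$, prove projectivity by exhibiting the nef supporting $\Q$-divisor $b^*(D-\tau Y_0)-(\tau-a_1)E$, and track fixed components and weights with the AM vs.\ FM formula. The blueprint is the right one, but there are concrete slips in the geometric bookkeeping that you would have to repair before the proof goes through.

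First, the exceptional divisor of the blowup along $B^+_1$ is not $V^+_1\times_{Y_1}V^-_1$. Because the action is of B-type, the closed cell $B^+(Y)$ meets the divisor $Y_0$ and is the $\P^{\nu^+(Y)}$-bundle $\P(\cN^+(Y)^\vee\oplus\cO_Y)$ over $Y$; consequently $E\simeq B^+_1\times_{Y_1}V^-_1$, and the fibers contracted by the second map are the compactified cells $\P^{\nu^+(Y)}$ (not $\P^{\nu^+(Y)-1}$), on which $\cO(E)$ restricts to $\cO(-1)$. The variety $V^+_1\times_{Y_1}V^-_1$ that you describe is only $E\cap Y_0'$. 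This is exactly where the B-type hypothesis enters, and it explains why $b'(E)=V^-_1$ lands \emph{inside} the strict transform of $Y_0$: the weight-$a_1$ level is not ``contracted away'' but absorbed into the new sink, which is how the criticality drops to $r-1$. Second, $\phi^-$ contracts $E$ itself (a fiber-type contraction of the exceptional divisor), not the strict transform of $B^-_1$. Third, and most substantially, the projectivity step is only gestured at: the paper reduces nefness of the supporting divisor to its restriction to $Y_0'$ by listing the invariant curve classes, and then identifies $b'|_{Y_0'}$ with the resolution of the natural birational map between the geometric quotients $Y_0\simeq\GX(0,1)$ and $\GX(1,2)$, using Proposition \ref{prop:semisection2} and Lemma \ref{lem:projections2} to recognize the relevant linear system. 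The identification of the new sink of $\SX^-$ with $\GX(1,2)$ is the key input needed to make your ``base-point free, hence descends to an ample divisor'' step precise, and it is absent from your sketch.
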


\begin{remark}\label{rem:flip}
Note that, by composing the action of $\C^*$ with the inversion map $t\mapsto t^{-1}$ (an operation that exchanges the sink and the source of the action), the above statement implies that if $\nu^+(Y)>1$ for every fixed point component of weight $a_{r-1}$, there exists a small  $\C^*$-equivariant modification $\psi^+:X\dashrightarrow\SX^+$ onto a smooth variety admitting a B-type equalized action, whose indeterminacy locus is $B^-_{r-1}$, such that the $\Q$-divisor $D^+_\tau:=\psi^+_*(D-(\delta-\tau) Y_r)$ is ample for every $\tau\in (a_{r-2},a_{r-1})\cap \Q$, and such that the critical values of $(\SX^+,D^+_\tau)$ are $0,a_1-\tau,\dots,a_{r-2}-\tau$.
\end{remark}

The situation of Theorem \ref{thm:flip} may be represented as follows. We consider in $\NU(X)$ the affine plane $\cS_D$ of classes of the form $D-\tau_-Y_0-(\delta-\tau_+)Y_r$, $\tau_-,\tau_+\in\R$. The region $\Nef(X)\cap \cS_D$ meets $\Nef(\SX^-)\cap\cS_D$ on an edge passing by $D^-_{a_1}=D-a_1Y_0$ and $\Nef(\SX^+)\cap\cS_D$ on an edge passing by $D^+_{a_{r-1}}=D-(\delta-a_{r-1})Y_r$. Each wall-crossing corresponds to a flip. We have represented the intersections of the cones with $\cS_D$ in Figure \ref{fig:flip}; a more complete description will be provided in Section \ref{sec:movbordism}.

\begin{center}
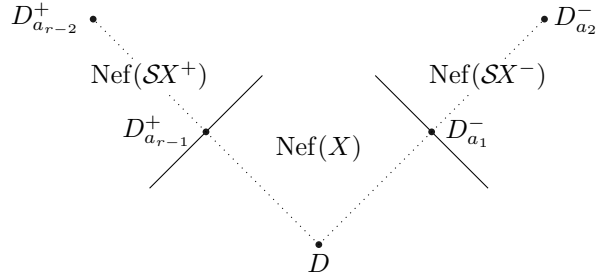
\begin{figure}[h!]
\begin{tikzpicture}[scale=0.75]
\draw (-3,-3) -- (-1,-1);
\draw (1,-1) -- (3,-3);
\fill[black!90!white] (0,-4) circle (0.6mm);
\fill[black!90!white] (2,-2) circle (0.6mm);
\fill[black!90!white] (4,0) circle (0.6mm);
\fill[black!90!white] (-2,-2) circle (0.6mm);
\fill[black!90!white] (-4,0) circle (0.6mm);
\draw[dotted] (0,-4) -- (2,-2);
\draw[dotted] (2,-2) -- (2.7,-1.3);
\draw[dotted] (3.3,-0.7) -- (4,0);
\draw[dotted] (0,-4) -- (-2,-2);
\draw[dotted] (-2,-2) -- (-2.7,-1.3);
\draw[dotted] (-3.3,-0.7) -- (-4,0);
\node [below] at (0,-4) {$D$};
\node [left] at (-2.1,-2) {$D^+_{a_{r-1}}$};
\node [left] at (-4,0) {$D^+_{a_{r-2}}\,$};
\node [right] at (2,-2) {$\,D^-_{a_1}$};
\node [right] at (4,0) {$D^-_{a_2}\,$};
\node  at (0,-2.3) {$\Nef(X)$};
\node  at (-3,-1) {$\Nef(\SX^+)$};
\node  at (3,-1) {$\Nef(\SX^-)$};
\end{tikzpicture}
\caption{Two flips of $X$.\label{fig:flip}} 
\end{figure}
\end{center}

The proof will be done in three steps. We will start by showing in Section \ref{ssec:existence}, by means of Nakano Contractibility Theorem, that $\SX^-$ exists as a compact complex manifold (Corollary \ref{cor:SQM1}). Then we will proof  in Section \ref{ssec:projectivity} that $\SX^-$ is projective, by analyzing the restriction of  $\psi^-:X\dashrightarrow \SX^-$ to the extremal fixed point components of $X$ and relating them to the geometric quotients of $X$. Finally, in Section \ref{ssec:properties}, we will check that $\SX^-$ satisfies all the properties required in the statement.

\subsection{Existence of $\SX^-$ as a complex manifold}\label{ssec:existence}

Let us start by studying the variety $B^+_1$, which will be exceptional locus of  $\psi^-:X\dashrightarrow \SX^-$, and proving that it is a projective bundle over $Y_1$, in the sense that the irreducible components of $B^+_1$ are projective bundles over the irreducible components of $Y_1$. Note that $Y_1$ is not assumed to be irreducible, and in fact it may have irreducible component of different dimensions (see Remark \ref{rem:B1irred} below). We will also identify the normal bundle to $B^+_1$ in $X$.  

\begin{proposition}\label{prop:SQM1}
Let $X$ be a smooth projective variety, and $D$ be an ample $\Q$-divisor on $X$, such that $(X,D)$ admits an equalized B-type $\C^*$-action, of criticality $r\geq 2$. Then $B^+_1\subset X$ is the disjoint union of the varieties $B^+(Y)$, $Y\in\cY$, $\mu_D(Y)=a_1$, and each $B^+(Y)$ is a projective bundle over $Y$, of relative dimension $\nu^+(Y)$. 
Moreover, denoting by $p:B^+(Y)\to Y$ the natural projection, and by $F$ a fiber of $p$ we have:
$$\cN_{B^+(Y)|X}|_F \simeq \cO_{F}(-1)^{\oplus \nu^-(Y)}.$$
\end{proposition}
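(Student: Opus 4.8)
The plan is to analyze the geometry of $B^+_1$ by restricting to each fixed point component $Y$ with $\mu_D(Y)=a_1$ and using the Bia\l ynicki-Birula decomposition. First I would observe that $B^+_1 = \bigcup_{\mu_D(Y)=a_1} B^+(Y)$ by definition, and that the pieces $B^+(Y)$ are pairwise disjoint: if two such closures met, the intersection point would lie in a fixed point component, but the only fixed point component one can reach flowing down ($t\to 0$) from $X^+(Y)$ is $Y$ itself, since all these components share the same critical value $a_1$ and the weight map is strictly monotone along nonconstant orbits (by the AM vs. FM formula, Lemma \ref{lem:AMFM}, applied to $D$). So I would fix one component $Y$ and work with $X^+(Y)$, which by \cite[Theorem~4.2]{CARRELL} is $\C^*$-equivariantly isomorphic to the total space of $\cN^+(Y)\to Y$, a vector bundle of rank $\nu^+(Y)$ with all $\C^*$-weights equal to $+1$ (here the equalization hypothesis enters via Lemma \ref{lem:equalized}).

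Next I would identify the closure $B^+(Y)=\overline{X^+(Y)}$ inside $X$. Since the $\C^*$-action on the fibers of $\cN^+(Y)$ has all weights $+1$, the action has no fixed points away from the zero section within this cell, and adding the source-type limits ($t\to 0$ sends every nonzero vector to the zero section, while $t\to\infty$ on the cell $X^+(Y)$ escapes the cell) one compactifies each fiber $\CC^{\nu^+(Y)}$ to $\PP^{\nu^+(Y)}$: concretely, $B^+(Y)$ should be identified with the projective completion $\PP(\cN^+(Y)\oplus\cO_Y)$, or at least it maps to $Y$ with fibers $\PP^{\nu^+(Y)}$. The cleanest way to make this rigorous is to note that the limit points of orbits in $X^+(Y)$ as $t\to\infty$ all lie in $B^+_k$ for larger $k$, and that the closure is obtained fiberwise; the fiber over $y\in Y$ is the closure in $X$ of the cell $\cN^+(Y)_y\cong\CC^{\nu^+(Y)}$, which is a $\nu^+(Y)$-dimensional projective variety that is smooth (as $B^+(Y)$ is smooth by the Bia\l ynicki-Birula theory, $X$ being smooth) and rational, carrying a $\CC^*$-action with an isolated fixed point structure forcing it to be $\PP^{\nu^+(Y)}$. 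Thus $p:B^+(Y)\to Y$ is a projective bundle of relative dimension $\nu^+(Y)$.

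For the normal bundle, I would restrict to a fiber $F\cong\PP^{\nu^+(Y)}$ over a point $y\in Y$. Over the open cell $X^+(Y)$ the ambient variety $X$ looks locally like $\cN^+(Y)\oplus\cN^-(Y)$ (using $X=X^+(Y)\cup X^-(Y)\cup\cdots$ and the splitting \eqref{eq:normal+-} near $Y$); the directions normal to $B^+(Y)$ are exactly the $\cN^-(Y)$-directions, on which $\CC^*$ acts with weight $-1$. So along the zero section $Y\subset F$ the conormal behaves like $\cN^-(Y)^\vee$, and the $\CC^*$-weight $-1$ on the normal directions, combined with the $\CC^*$-weight $+1$ on $\cO_F(-1)$ (the tautological identification of the fiber $F$ with $\PP$ of the weight-$(+1)$ bundle $\cN^+(Y)$), pins down $\cN_{B^+(Y)|X}|_F$ as a direct sum of $\nu^-(Y)$ copies of a line bundle of degree $-1$ on $F$, i.e. $\cO_F(-1)^{\oplus\nu^-(Y)}$. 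The rank is $\nu^-(Y)=\operatorname{codim}_X B^+(Y)$ by dimension count ($\dim B^+(Y)=\dim Y+\nu^+(Y)$ and $\dim X=\dim Y+\nu^+(Y)+\nu^-(Y)$), and the degree $-1$ on each ruling line can be checked by a one-dimensional-orbit computation: taking $C\subset F$ a line through $y$, which is the closure of a one-dimensional orbit, one computes the $\CC^*$-weights of $\cN_{B^+(Y)|X}|_C$ at its two fixed points and applies the AM vs. FM formula (Lemma \ref{lem:AMFM}) to get degree $-1$; splitting into line bundles then follows from the structure of $\CC^*$-equivariant bundles on $\PP^n$ with the standard scaling action.

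\textbf{Main obstacle.} The delicate point is the global identification of $B^+(Y)$ as a projective bundle (rather than merely a fibration with $\PP^{\nu^+(Y)}$ fibers) and, hand in hand with it, controlling the normal bundle globally enough to pass from the fiberwise statement to the clean formula $\cO_F(-1)^{\oplus\nu^-(Y)}$; one must make sure the closure operation commutes with restriction to fibers of $p$ and that no extra components of $B^+_1$ sneak in from higher critical values. I expect this to be handled by the local normal form for $\CC^*$-actions near a fixed component together with the smoothness of Bia\l ynicki-Birula closures, but it is where the real work lies.
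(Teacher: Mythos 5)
Your overall strategy (Bia{\l}ynicki-Birula charts, fiberwise compactification of $\cN^+(Y)$, and an equivariant weight computation at the two fixed points of a line combined with AM vs.\ FM and a uniform-bundle splitting criterion) is the same as the paper's, and the normal bundle computation at the end is essentially correct. However, two steps in the middle have genuine gaps, and both are exactly the places where the B-type hypothesis must be used — your argument never invokes it.

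First, the disjointness argument does not work as stated. The cells $X^+(Y)$, $X^+(Y')$ are disjoint by definition; the only possible intersection of $B^+(Y)$ and $B^+(Y')$ lies in $Y_0$, which is reached from $X^+(Y)$ by flowing to $t\to\infty$, not by ``flowing down'' to $Y$. Your argument would equally ``prove'' disjointness when $Y_0$ is a point, where it is false: there every $B^+(Y)$ contains the sink. The correct reason is that, since the action is of B-type, $X^-(Y_0)\simeq\cN^-(Y_0)$ is a \emph{line} bundle over $Y_0$, so each point of $Y_0$ lies on the closure of a unique nontrivial orbit, which can converge (as $t\to 0$) to only one of the components of weight $a_1$. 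Second, you assert that $B^+(Y)$ is smooth ``by the Bia{\l}ynicki-Birula theory''; but BB theory only gives smoothness of the open cell $X^+(Y)$, and closures of BB cells are singular in general, so this cannot be taken for granted. The paper proves smoothness by covering $B^+(Y)$ with two explicit charts, $X^+(Y)\simeq\cN^+(Y)$ and $\cN^-(Y_0)|_{B^+(Y)\cap Y_0}$, after first identifying $B^+(Y)\cap Y_0\simeq\P(\cN^+(Y)^\vee)$ as the geometric quotient of the $\C^*$-principal bundle $X^+(Y)\setminus Y$ (here equalization is used: $\C^*$ acts homothetically on $\cN^+(Y)$). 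This two-chart description also settles the point you flag as the ``main obstacle'': the global identification $B^+(Y)\simeq\P(\cN^+(Y)^\vee\oplus\cO_Y)$ is obtained by blowing up $B^+(Y)$ along $Y$, recognizing the result as the $\P^1$-bundle $\P(\cO_{\P(\cN^+(Y)^\vee)}(1)\oplus\cO)$ via the normal bundle of the exceptional section, and blowing back down. Without this (or an equivalent) argument, the fiberwise claim that each fiber is $\P^{\nu^+(Y)}$ — itself only sketched, since smoothness and rationality of the fiber closure do not by themselves force it to be a projective space — does not yield the projective bundle structure that the rest of the paper relies on.
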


\begin{remark}\label{rem:B1irred}
One may  construct examples in which $Y_1$ has components of different dimensions. For instance, the variety $X=\mathbb{P}^1\times Q^{n-1}$ (where $Q^{n-1}$ denotes a smooth $(n-1)$-dimensional quadric) admits an equalized $\C^*$-action of bandwidth $3$ such that $Y_0$ and $Y_3$ are two isolated points, and both  $Y_1$ and $Y_2$ are  the union of a point and $Q^{n-3}$ (see \cite[Theorem 3.6 (2)]{RW}). If we consider the blowup of $X$ along $Y_0$ and $Y_3$ we obtain an equalized B-type $\C^*$-action  with $B^{+}_1$, $B^{+}_2$ still reducible with two irreducible components. 
\end{remark}

\begin{proof}[Proof of Proposition \ref{prop:SQM1}]
Note that, since $X$ is smooth and the action is of B-type, then, by Bia{\l}ynicki-Birula Theorem (see for instance \cite[Theorem~2.8]{WORS1}) $X^-(Y_0)$ is isomorphic to a line bundle over $Y_0$; in particular, two components of the form $B^+(Y)\subset B^+_1$ do not intersect and we may write 
$$B^+_1=\bigsqcup_{\substack{Y\in \cY\\\mu_D(Y)=a_1}}B^+(Y).$$

Let $Y\subset Y_1$ be an irreducible component.  
Consider  the set $X^+(Y)\setminus Y \subset X$
which is isomorphic, by the Bia{\l}ynicki-Birula decomposition (see Section \ref{ssec:actions}), to the open set $$\cN^+(Y)\setminus s_0(Y)\subset \cN^+(Y),$$ where $s_0$ denotes the zero section of $\cN^+(Y)$. On the other hand, a similar argument shows that $X^+(Y)\setminus Y$ is isomorphic to the open set 
$$\cN^-(Y_0)_{|B^+(Y)\cap Y_0}\setminus s'_0(B^+(Y)\cap Y_0)\subset \cN^-(Y_0)_{|B^+(Y)\cap Y_0},$$ 
where $s'_0$ denotes the zero section of $\cN^-(Y_0)$. This is a $\C^*$-principal bundle over $B^+(Y)\cap Y_0$, so  its projection to $B^+(Y)\cap Y_0$ is a geometric quotient, whose fibers are the orbits of the action of $\C^*$.  Since the action is  equalized, $\C^*$ acts homothetically on $\cN^+(Y)$, therefore we get 
\begin{equation}\label{eq:excepY0}
B^+(Y)\cap Y_0\simeq\P(\cN^+(Y)^\vee).
\end{equation}

Note that $B^+(Y)$ can be written as the union of two open sets, $X^+(Y)$, and $\cN^-(Y_0)_{|B^+(Y)\cap Y_0}$, and the smoothness of $B^+(Y)$ follows.

 We consider now the blowup $B^{+}(Y)^{\flat}$ of $B^+(Y)$ along $Y$; it is a $\P^1$-bundle over $\P(\cN^+(Y)^\vee)$ with two sections: one is the exceptional divisor of the blowup, and the other is the strict transform of $B^+(Y)\cap Y_0$. By computing the normal bundle of the first section, it follows that:
$$
B^{+}(Y)^{\flat}\simeq\P\left(\cO_{\P(\cN^+(Y)^\vee)}(1)\oplus\cO_{\P(\cN^+(Y)^\vee)}\right).
$$
From this it easily follows that 
\begin{equation}\label{eq:proof}
 B^+(Y)\simeq\P\left(\cN^+(Y)^\vee\oplus\cO_{Y}\right).
\end{equation}

Let $F\simeq \P^{\nu^+(Y)}$ be a fiber of $p$, and let $x_1$ be the intersection point $F\cap Y$. 
We will show that $(\cN_{B^+(Y)|X})_{|\P^1}\simeq\cO_{\P^1}(-1)^{\oplus \nu^-(Y)}$ for every line $\P^1\subset F$ passing by $x_1$; the results will then follow from \cite[Theorem~3.2.1]{OSS}.

Given such a line $\P^1$, we set $x_0:=\P^1\cap Y_0$, and compute the splitting type of the bundle $(\cN_{B^+(Y)|X})_{|\P^1}$ by using the vector bundle version of the AM vs FM formula (\cite[Lemma~2.17]{WORS1}). Since the action is equalized, the splitting type can be obtained as the differences of the weights of the action on the vector spaces $\cN_{B^+(Y)|X,x_1}$, $\cN_{B^+(Y)|X,x_0}$, and it is a straightforward computation that these weights are, respectively $\big((-1)^{\nu^-(Y)}\big)$, $\big(0^{\nu^-(Y)}\big)$,   with the exponent denoting the occurrence of $-1$ and $0$, respectively. This completes the proof.
\end{proof}

Let us now denote by $b:X'\to X$ the blowup of $X$ along $B^+_1$ and by $E$ its exceptional divisor, which is the disjoint union of the divisors $E_Y=\P(\cN_{B^+(Y),X}^\vee)$, $Y\in \cY$, $\mu_D(Y)=a_1$; recall also the varieties $V^\pm(Y)$, for every $Y\in \cY$, defined in (\ref{eq:Vpm}). The following statement describes $E$ and its irreducible components.

\begin{proposition}\label{prop:fornakano} 
Let $(X,D)$ be as in Proposition \ref{prop:SQM1}. Then: 
$$E\simeq B^+_1\times_{Y_1}V^-_1.$$
\end{proposition}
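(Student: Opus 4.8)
The plan is to identify $E$ fibrewise over $B^+_1$ and assemble the result over each irreducible component $B^+(Y)$ of $B^+_1$. Recall from Proposition~\ref{prop:SQM1} that $B^+_1=\bigsqcup B^+(Y)$ with each $B^+(Y)$ a smooth projective bundle $p\colon B^+(Y)\to Y$ of relative dimension $\nu^+(Y)$, and that along a fibre $F$ of $p$ the conormal bundle $\cN_{B^+(Y)|X}^\vee$ restricts to $\cO_F(1)^{\oplus\nu^-(Y)}$. Since $E=\P(\cN_{B^+(Y)|X}^\vee)$ over $B^+(Y)$ and the isomorphism to be proved is over $Y_1$, it suffices to prove $E_Y\simeq B^+(Y)\times_Y V^-(Y)$ for each such $Y$, where $V^-(Y)=\P(\cN^-(Y)^\vee)$.

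First I would pin down $\cN_{B^+(Y)|X}^\vee$ globally, not just along fibres. The computation in the proof of Proposition~\ref{prop:SQM1} already shows, via the vector-bundle AM vs.\ FM formula (\cite[Lemma~2.17]{WORS1}), that the $\C^*$-weights of $\cN_{B^+(Y)|X}$ at the point $x_1=F\cap Y$ are $(-1)^{\nu^-(Y)}$ and at $x_0=\P^1\cap Y_0$ are $(0)^{\nu^-(Y)}$; combined with \cite[Theorem~3.2.1]{OSS} this gives the fibrewise splitting. To globalize, note that $B^+(Y)\simeq\P(\cN^+(Y)^\vee\oplus\cO_Y)$ by \eqref{eq:proof}, with tautological quotient line bundle $\xi:=\cO_{\P(\cN^+(Y)^\vee\oplus\cO_Y)}(1)$, and that the locus $Y\subset B^+(Y)$ is the section corresponding to the quotient $\cN^+(Y)^\vee\oplus\cO_Y\twoheadrightarrow\cO_Y$. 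Tracking the $\C^*$-equivariant structure through the Bia\l ynicki-Birula isomorphisms $X^+(Y)\simeq\cN^+(Y)$ and $X^+(Y)\setminus Y\simeq\cN^-(Y_0)_{|B^+(Y)\cap Y_0}\setminus s_0'$ used in that proof, one identifies
$$
\cN_{B^+(Y)|X}\simeq p^*\cN^-(Y)\otimes\cO_{B^+(Y)}(-Y),
$$
where $\cO_{B^+(Y)}(-Y)$ is the ideal sheaf line bundle of the section $Y$; indeed this line bundle restricts to $\cO_F(-1)$ on each fibre $F$ (as $Y\cap F$ is a hyperplane point of $F\simeq\P^{\nu^+(Y)}$ — more precisely $Y$ meets $F$ in a point and $\cO(-Y)_{|F}=\cO(-1)$ after checking which of the two natural sections $Y$ is), reproving the fibrewise statement of Proposition~\ref{prop:SQM1}, and the twist by $p^*\cN^-(Y)$ is forced by the weight bookkeeping at $x_0$, where the BB-cell structure identifies the normal directions with $\cN^-(Y_0)_{|Y_0}$ restricted appropriately. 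Granting this, projectivizing kills the line-bundle twist:
$$
E_Y=\P\big(\cN_{B^+(Y)|X}^\vee\big)\simeq\P\big(p^*\cN^-(Y)^\vee\big)\simeq B^+(Y)\times_Y\P\big(\cN^-(Y)^\vee\big)=B^+(Y)\times_Y V^-(Y),
$$
the middle isomorphism because $\P$ is insensitive to tensoring by a line bundle and base change of $\P(\cN^-(Y)^\vee)\to Y$ along $p$ gives exactly the fibre product. Taking the disjoint union over all $Y$ with $\mu_D(Y)=a_1$ yields $E\simeq B^+_1\times_{Y_1}V^-_1$.

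The main obstacle is establishing the global isomorphism $\cN_{B^+(Y)|X}\simeq p^*\cN^-(Y)\otimes\cO_{B^+(Y)}(-Y)$ rather than merely its fibrewise restriction: one must genuinely chase the $\C^*$-equivariant gluing of the two charts $X^+(Y)\simeq\cN^+(Y)$ and $\cN^-(Y_0)_{|B^+(Y)\cap Y_0}$ covering $B^+(Y)$, and check that the normal bundle data agree on the overlap, i.e.\ that the isomorphism \eqref{eq:proof} is compatible with the identification $B^+(Y)\cap Y_0\simeq\P(\cN^+(Y)^\vee)$ of \eqref{eq:excepY0} in a way that carries $\cN^-(Y)$ to $\cN^-(Y_0)$ restricted to the right locus. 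A cleaner alternative, which I would use if the direct chase gets unwieldy, is to avoid identifying the normal bundle on the nose: prove the fibre-product statement locally over an affine trivializing cover $\{U_\alpha\}$ of $Y$, where $B^+(Y)_{|U_\alpha}\simeq U_\alpha\times\P^{\nu^+(Y)}$ and $V^-(Y)_{|U_\alpha}\simeq U_\alpha\times\P^{\nu^-(Y)-1}$, use Proposition~\ref{prop:SQM1} to get $\cN_{B^+(Y)|X}$ restricted to each $U_\alpha\times\P^{\nu^+(Y)}$ of the form $\mathrm{pr}_{\P}^*\cO(-1)\otimes(\text{pullback from }U_\alpha)$, hence $E_{Y}$ trivializes over $U_\alpha$ as $U_\alpha\times\P^{\nu^+(Y)}\times\P^{\nu^-(Y)-1}$, and then observe that the transition functions on overlaps $U_\alpha\cap U_\beta$ act on the second $\P$-factor precisely through the cocycle of $\cN^-(Y)$ (this is where the $\C^*$-equivariance pins down the gluing), so the $E_Y$-bundle is the fibre product of the $B^+(Y)$-bundle and the $V^-(Y)$-bundle. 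Either route reduces the essential content to the already-proven Proposition~\ref{prop:SQM1} plus standard facts on projective bundles, \cite[Theorem~3.2.1]{OSS}, and the BB-decomposition.
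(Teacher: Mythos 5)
Your overall architecture (fibrewise splitting of $\cN_{B^+(Y)|X}$ from Proposition \ref{prop:SQM1}, then pin down the second factor of the resulting double projective-bundle structure as $V^-(Y)$) matches the paper's, but the step you lean on to close the argument has a genuine flaw. The proposed global identification $\cN_{B^+(Y)|X}\simeq p^*\cN^-(Y)\otimes\cO_{B^+(Y)}(-Y)$ is ill-posed: the section $Y\subset B^+(Y)\simeq\P(\cN^+(Y)^\vee\oplus\cO_Y)$ has codimension $\nu^+(Y)$, which is in general greater than one (nothing in Proposition \ref{prop:SQM1} bounds $\nu^+$), so $\cO_{B^+(Y)}(-Y)$ is not a line bundle and ``$Y\cap F$ is a hyperplane point of $F\simeq\P^{\nu^+(Y)}$'' only makes sense when $\nu^+(Y)=1$. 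The object you want is the dual tautological bundle $\xi^{-1}$ of $\P(\cN^+(Y)^\vee\oplus\cO_Y)$, which does restrict to $\cO_F(-1)$ on fibres and to $\cO_Y$ on the section. But even after that correction the isomorphism is not established: the phrase ``forced by the weight bookkeeping at $x_0$'' points at the wrong end of the orbit (at $Y_0$ the weights are all $0$, giving $\cN_{V^+(Y)|Y_0}$, not $\cN^-(Y)$), and your fallback route via local trivializations leaves exactly the decisive claim --- that the transition functions on the second $\P$-factor are the cocycle of $\cN^-(Y)$ --- as an unverified assertion. That claim \emph{is} the proposition.

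The missing idea, and the one the paper uses, is to restrict the normal bundle to the section $Y\subset B^+(Y)$ rather than to $B^+(Y)\cap Y_0$: since $B^+(Y)$ coincides with $X^+(Y)$ in a neighbourhood of $Y$, the splitting \eqref{eq:normal+-} gives $(\cN_{B^+(Y)|X})_{|Y}\simeq\cN^-(Y)$ on the nose. With the fibrewise splitting you already have, $E_Y$ carries a second projective-bundle structure $\bar p:E_Y\to Z$ with $Z$ a $\P^{\nu^-(Y)-1}$-bundle over $Y$ fitting in a Cartesian square over $p:B^+(Y)\to Y$; restricting that square over the section $Y$ (whose composition $Y\hookrightarrow B^+(Y)\to Y$ is the identity) exhibits $\P\big((\cN_{B^+(Y)|X})_{|Y}^\vee\big)=V^-(Y)$ as the fibre of the base change, i.e.\ $Z\simeq V^-(Y)$, and the fibre-product description follows. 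This completely sidesteps the global normal-bundle computation you flagged as the main obstacle. I recommend replacing both of your routes by this restriction-to-the-section argument.
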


\begin{proof}
We will show that for every irreducible component $Y\subset Y_1$ we have:
$$E_Y \simeq B^+(Y)\times_Y V^-(Y).$$

Using Proposition \ref{prop:SQM1}, given a fiber $F$ of $p:B^+(Y)\to Y$, we have that $\cN_{B^+(Y)|X}|_F \simeq \cO_{F}(-1)^{\oplus \nu^-(Y)}$, hence $E_Y$ is a $\P^{\nu^+(Y)}$-bundle over a variety $Z$, and $Z$ is necessarily a $\P^{\nu^-(Y)-1}$-bundle over $Y$, i.e. we have a Cartesian square:
$$
\xymatrix@C=15mm@R=12mm{E_Y\ar[r]^(.60){\P^{\nu^+(Y)}}\ar[d]_{\P^{\nu^-(Y)-1}}&Z\ar[d]^{\P^{\nu^-(Y)-1}}\\B^+(Y)\ar[r]^(.60){\P^{\nu^+(Y)}}&Y}
$$
whose maps are projective bundles. Now we note that, since $B^+(Y)$ is isomorphic to $X^+(Y)$ in a neighborhood of $Y$, then $(\cN_{B^+(Y)|X})_{|Y}\simeq\cN^-(Y)$, and we have a diagram composed of two Cartesian squares:
$$
\xymatrix@C=10mm@R=10mm{V^-(Y)\ar[r]\ar[d]&E_Y\ar[r]^{\bar p}\ar[d]&Z\ar[d]\\Y\ar@{^(->}+<2.5ex,0ex>;[r]&B^+(Y)\ar[r]^p&Y}
$$
In particular, the composition of the two upper horizontal arrows is an isomorphism, that is 
$Z\simeq V^-(Y)$, and the statement follows.
\end{proof}

Since $B^+_1$ is $\C^*$-invariant, then the action of $\C^*$ on $X$ extends to $X'$. This action is of B-type by construction, and its sink which we denote by $Y_0'$ is the strict transform of $Y_0$ via $b$. Since $B^+_1$ intersects $Y_0$ transversally, it follows that $Y_0'$ is the blowup of $Y_0$ along $B^+_1\cap Y_0$. We have shown in the proof of Proposition \ref{prop:SQM1} (see Equation (\ref{eq:excepY0})) that for each component $Y\subset Y_1$ we have that $B^+(Y)\cap Y_0\simeq \P\left(\cN^+(Y)^\vee\right)=V^+(Y)$. We will describe now the exceptional locus $E_Y\cap Y_0'$ of $b_{|Y_0'}$.

\begin{corollary}\label{cor:fornakano} 
Let $(X,D)$ be as in Proposition \ref{prop:SQM1}, let $Y\subset Y_1$ be an irreducible component of $Y_1$, and $E_Y$ be the corresponding irreducible component of the exceptional divisor $E$ of the blowup $b:X'\to X$. Moreover, let $\Gamma\simeq\P^{\nu^+(Y)-1}$ be any fiber of $V^+(Y)\to Y$. Then 
$$
\cN_{V^+(Y)|Y_0}|_{\Gamma}\simeq\cO_{\Gamma}(-1)^{\oplus \nu^-(Y)},\quad
E_Y\cap Y_0'\simeq\P(\cN^\vee_{V^+(Y)|Y_0})\simeq V^+(Y)\times_Y V^-(Y).
$$
\end{corollary}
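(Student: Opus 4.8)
The plan is to mimic, at the level of the sink $Y_0$, the structure analysis that Proposition \ref{prop:SQM1} carried out for $X$ itself. Recall from the proof of Proposition \ref{prop:SQM1} (Equation (\ref{eq:excepY0})) that $V^+(Y)=B^+(Y)\cap Y_0\simeq\P(\cN^+(Y)^\vee)$ sits inside $Y_0$, and that $Y_0'$ is the blowup of $Y_0$ along this locus $V^+(Y)$ (this was observed just before the statement). Since the blowup $b:X'\to X$ along $B^+_1$ restricts, over $Y_0$, to the blowup of $Y_0$ along $V^+(Y)$, the exceptional locus $E_Y\cap Y_0'$ is exactly the projectivized normal bundle $\P(\cN^\vee_{V^+(Y)|Y_0})$. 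So the first equality to establish is the computation of $\cN_{V^+(Y)|Y_0}|_\Gamma$ on a fiber $\Gamma\simeq\P^{\nu^+(Y)-1}$ of $V^+(Y)\to Y$, and the rest follows formally, exactly as in Proposition \ref{prop:fornakano}.

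First I would compute $\cN_{V^+(Y)|Y_0}|_\Gamma$. The cleanest route is again the vector bundle version of the AM vs.\ FM formula (\cite[Lemma~2.17]{WORS1}), applied to a line $\P^1\subset\Gamma$ through a point $x_1\in\Gamma\cap Y$: since the action is equalized, the splitting type of $\cN_{V^+(Y)|Y_0}$ on such a line is read off as the difference of the $\C^*$-weights of $\cN_{V^+(Y)|Y_0}$ at $x_1$ and at $x_0:=\P^1\cap Y_0\cap(\text{source of the line})$ — but here $\Gamma\subset Y_0$ is pointwise fixed, so one must instead use a line inside $Y_0$ joining $x_1$ to a point of a neighbouring cell. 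Alternatively, and more directly, I would identify $\cN_{V^+(Y)|Y_0}$ intrinsically: $V^+(Y)\hookrightarrow Y_0$ is the inclusion of the projectivized bundle $\P(\cN^+(Y)^\vee)$, and $Y_0$ near this locus is $\C^*$-equivariantly identified (via $X^-(Y_0)\simeq\cN^-(Y_0)$ and the description in the proof of Proposition \ref{prop:SQM1}, Equation (\ref{eq:proof})) with a piece of $\P(\cN^+(Y)^\vee\oplus\cO_Y)$; restricting to the relevant open set and computing $\cN_{V^+(Y)|Y_0}$ on a fiber yields $\cO_\Gamma(-1)^{\oplus\nu^-(Y)}$ by the very same weight count as in Proposition \ref{prop:SQM1} — the weights at $x_1$ and at the limit point are $\big((-1)^{\nu^-(Y)}\big)$ and $\big(0^{\nu^-(Y)}\big)$ respectively, since the tangent directions to $Y_0$ transverse to $V^+(Y)$ come from $\cN^-(Y)$ exactly as before. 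Then, since a uniform splitting $\cO_\Gamma(-1)^{\oplus\nu^-(Y)}$ on the fibers of the $\P^{\nu^+(Y)-1}$-bundle $V^+(Y)\to Y$ forces, by \cite[Theorem~3.2.1]{OSS}, the bundle to be a twist of a pullback, one concludes that $\P(\cN^\vee_{V^+(Y)|Y_0})$ is a $\P^{\nu^-(Y)-1}$-bundle over $V^+(Y)$.

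For the second isomorphism $E_Y\cap Y_0'\simeq V^+(Y)\times_Y V^-(Y)$, I would run the two-Cartesian-squares argument verbatim from the proof of Proposition \ref{prop:fornakano}: $E_Y\cap Y_0'=\P(\cN^\vee_{V^+(Y)|Y_0})$ is a $\P^{\nu^-(Y)-1}$-bundle over $V^+(Y)$, hence (stacking with $V^+(Y)\to Y$) a tower over $Y$; identifying $\cN_{V^+(Y)|Y_0}|_Y$ — the restriction to the zero-section copy of $Y$ — with $\cN^-(Y)$ (because, just as $\cN_{B^+(Y)|X}|_Y\simeq\cN^-(Y)$ in Proposition \ref{prop:fornakano}, here $V^+(Y)$ agrees with the relevant cell near $Y$), one gets $V^-(Y)=\P(\cN^-(Y)^\vee)$ appearing as the pullback, and the composed Cartesian squares give $E_Y\cap Y_0'\simeq V^+(Y)\times_Y V^-(Y)$. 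Taking the disjoint union over the components $Y\subset Y_1$ finishes the proof.

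The main obstacle I anticipate is the weight bookkeeping in the first step: one has to be careful that the normal directions to $V^+(Y)$ \emph{inside} $Y_0$ — as opposed to inside $X$ — are precisely the $\cN^-(Y)$-directions, i.e.\ that passing to the sink does not mix in any $\cN^+$-directions or change the weights. This is where the explicit local model $Y_0\supset V^+(Y)$, coming from $B^+(Y)\simeq\P(\cN^+(Y)^\vee\oplus\cO_Y)$ and $B^+(Y)\cap Y_0=V^+(Y)$, has to be invoked carefully; once that local picture is in hand the weight computation is the same one already done in Proposition \ref{prop:SQM1}, and everything else is formal bundle-chasing.
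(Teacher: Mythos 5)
There is a genuine gap, and it is concentrated in the one step your proposal never makes explicit: the transversality of $B^+(Y)$ and $Y_0$ gives a canonical isomorphism $\cN_{V^+(Y)|Y_0}\simeq (\cN_{B^+(Y)|X})_{|V^+(Y)}$, and the whole corollary is an immediate restriction of Propositions \ref{prop:SQM1} and \ref{prop:fornakano} through that isomorphism. Without it, both of your substitute arguments break down. For the first isomorphism: as you yourself note, the AM vs.\ FM route fails because $\Gamma\subset Y_0$ is pointwise fixed, so lines in $\Gamma$ are not orbit closures; but your fallback weight count at ``$x_1$ and the limit point'' is a computation of $\cN_{B^+(Y)|X}$ along a line $F\ni x_1$ \emph{transverse} to $Y_0$, not of $\cN_{V^+(Y)|Y_0}$ along a line inside $\Gamma$ (on which all weights are $0$, since $Y_0$ is fixed). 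The clean statement is simply that $\Gamma$ is a linear subspace of the fiber $F$ of $B^+(Y)\to Y$ and $\cN_{V^+(Y)|Y_0}|_\Gamma\simeq(\cN_{B^+(Y)|X})|_\Gamma$, whence $\cO_\Gamma(-1)^{\oplus\nu^-(Y)}$ by restriction from $\cN_{B^+(Y)|X}|_F\simeq\cO_F(-1)^{\oplus\nu^-(Y)}$. Your claim that $Y_0$ near $V^+(Y)$ is a piece of $\P(\cN^+(Y)^\vee\oplus\cO_Y)=B^+(Y)$ is also false: $B^+(Y)$ meets $Y_0$ exactly along $V^+(Y)$, it is not a neighbourhood of it in $Y_0$.

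The second gap is in your identification of the base of the tower with $V^-(Y)$: you propose to restrict $\cN_{V^+(Y)|Y_0}$ to ``the zero-section copy of $Y$'', but $Y\subset Y_1$ is disjoint from $V^+(Y)\subset Y_0$ (it sits at weight $a_1$, not $a_0$), so the restriction $\cN_{V^+(Y)|Y_0}|_Y$ is meaningless; the trick of Proposition \ref{prop:fornakano} worked precisely because $Y$ \emph{is} contained in $B^+(Y)$. The paper instead projectivizes the transversality isomorphism, so that $\P(\cN^\vee_{V^+(Y)|Y_0})$ sits inside $E_Y=\P(\cN^\vee_{B^+(Y)|X})\simeq B^+(Y)\times_Y V^-(Y)$ as the preimage of $V^+(Y)$, and the two Cartesian squares then give $V^+(Y)\times_Y V^-(Y)$ with no further normal bundle computation. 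Your overall skeleton (normal bundle on $\Gamma$, then a Cartesian-squares argument) is the right one, but both steps need the transversality identification to go through.
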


\begin{proof}  
Under the identification $B^+(Y)\cap Y_0=V^+(Y)$  we may write:
$$(\cN_{B^+(Y)|X})_{|V^+(Y)}\simeq \cN_{V^+(Y)|Y_0}.$$ Since ${\Gamma}$ is a projective subspace of a fiber of $B^+(Y)\to Y$, the first equality holds. 
Projectivizing the above isomorphism, the inclusion $\P(\cN^\vee_{V^+(Y)|Y_0})\subset\P(\cN_{B^+(Y)|X}^\vee)$ fits into a commutative diagram consisting of two Cartesian squares:
$$
\xymatrix@C=15mm{\P(\cN^\vee_{V^+(Y)|Y_0})\ar@{^(->}+<8ex,0ex>;[r]\ar[d]&\P(\cN_{B^+(Y)|X}^\vee)\ar[r]\ar[d]&V^-(Y)\ar[d]\\V^+(Y)\ar@{^(->}+<5ex,0ex>;[r]&B^+(Y)\ar[r]^p&Y}
$$
This implies that:
\[
\P(\cN^\vee_{V^+(Y)|Y_0})\simeq V^+(Y)\times_Y V^-(Y).\qedhere
\]
\end{proof}

\begin{corollary}\label{cor:SQM1}
Let $X$ be a smooth projective variety, and $D$ be an ample $\Q$-divisor on $X$, such that $(X,D)$ admits an equalized B-type $\C^*$-action, of criticality $r\geq 2$. Let $X'$ be the blowup of $X$ along $B^+_1$, with exceptional divisor $E$, and let $Y_0'\subset X'$ be the strict transform of $Y_0$ into $X'$. Then there exists a proper map $$b':X'\to \SX^-$$ which is the blowup of a smooth proper variety $\SX^-$ along a subvariety isomorphic to $V^-_1$, whose exceptional divisor is $E$. Moreover:
\begin{itemize}
\item $b'_{|_E}$ is the natural projection $E\simeq B^+_1\times_{Y_1}V^-_1\to V^-_1$.
\item $b'_{|Y'_0}$ is the smooth blowup of $b'(Y_0')$ along $V^-_1$, with exceptional divisor $E\cap Y_0'\simeq V^+_1\times_{Y_1}V^-_1$.
\end{itemize}
\end{corollary}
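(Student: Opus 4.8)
The strategy is to apply the Nakano Contractibility Theorem to the exceptional divisor $E\subset X'$, using the description of $E$ and of its normal bundle obtained in Propositions~\ref{prop:fornakano} and~\ref{prop:SQM1}. Recall from Proposition~\ref{prop:fornakano} that $E\simeq B^+_1\times_{Y_1}V^-_1$, so $E$ carries two projective bundle structures over $V^-_1$ and over $B^+_1$; the contraction $b':X'\to\SX^-$ we want to construct is the one contracting $E$ along the first of these, i.e. along the factor $B^+_1$. By Nakano's criterion (in the complex-analytic setting of \cite{Nakano} / \cite{FujikiNakano}) it suffices to check that the restriction of $\cN_{E|X'}=\cO_{X'}(E)|_E$ to each fiber $F'$ of the projection $E\to V^-_1$ is $\cO_{F'}(-1)$, where $F'\simeq\P^{\nu^+(Y)}$ is a projective space of the expected dimension; Nakano's theorem then produces a smooth complex manifold $\SX^-$ and a proper modification $b':X'\to\SX^-$ which is the blowdown of $E$ onto a smooth submanifold isomorphic to the base $V^-_1$, with exceptional divisor $E$. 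The two bullet-point statements are then essentially the content of this identification, once we track how $Y_0'$ sits inside $X'$.

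\textbf{Key steps.} First I would compute $\cO_{X'}(E)|_{F'}$ for $F'$ a fiber of $\bar{p}\colon E\to V^-_1$. This is the standard blowup computation: $X'\to X$ is the blowup of $X$ along $B^+_1$, so $\cO_{X'}(E)|_E\simeq\cO_{\P(\cN^\vee_{B^+_1|X})}(-1)$ as the tautological bundle of the projective bundle $E=\P(\cN^\vee_{B^+_1|X})\to B^+_1$. Restricting to the fiber $F'$ of $E\to V^-_1$ one uses the Cartesian square in the proof of Proposition~\ref{prop:fornakano}: the fiber $F'$ maps isomorphically to a fiber $F\simeq\P^{\nu^+(Y)}$ of $p\colon B^+(Y)\to Y$ under the other projection, and by Proposition~\ref{prop:SQM1} we have $\cN_{B^+(Y)|X}|_F\simeq\cO_F(-1)^{\oplus\nu^-(Y)}$, a trivial twist of a trivial bundle; hence $E_Y|_F=\P(\cO_F(1)^{\oplus\nu^-(Y)})\to F$ is the trivial $\P^{\nu^-(Y)-1}$-bundle, its fibers $F'$ map isomorphically to $F$, and the tautological quotient restricted along such an $F'$ is $\cO_{F}(-1)=\cO_{F'}(-1)$. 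So the Nakano negativity condition holds, giving $b'\colon X'\to\SX^-$ contracting $E$ onto $V^-_1$, with $b'|_E$ the projection $B^+_1\times_{Y_1}V^-_1\to V^-_1$, which is the first bullet. Second, for the statement on $Y_0'$: since $B^+_1$ meets $Y_0$ transversally (as $X^-(Y_0)$ is a line bundle over $Y_0$ by Bia{\l}ynicki--Birula and $B^+_1\cap Y_0=V^+_1$ by~\eqref{eq:excepY0}), $Y_0'$ is the blowup of $Y_0$ along $V^+_1$ and $E\cap Y_0'=\P(\cN^\vee_{V^+_1|Y_0})$, which by Corollary~\ref{cor:fornakano} equals $V^+_1\times_{Y_1}V^-_1$. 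Restricting $b'$ to $Y_0'$ then contracts $E\cap Y_0'=V^+_1\times_{Y_1}V^-_1$ along its $V^+_1$-factor onto a smooth variety $b'(Y_0')$ containing $V^-_1$; one checks, again by the normal-bundle computation $\cN_{V^+(Y)|Y_0}|_\Gamma\simeq\cO_\Gamma(-1)^{\oplus\nu^-(Y)}$ of Corollary~\ref{cor:fornakano} and the compatibility of the two Cartesian squares, that $b'|_{Y_0'}$ is precisely the smooth blowup of $b'(Y_0')$ along $V^-_1$ with exceptional divisor $E\cap Y_0'$.

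\textbf{The main obstacle.} The routine part is the normal-bundle bookkeeping; the one point that needs genuine care is the \emph{global} verification of Nakano's hypothesis — namely that $E\to V^-_1$ really is a (Zariski-locally trivial) projective bundle with fibers $\P^{\nu^+(Y)}$ of \emph{constant} dimension over each connected component, and that $\cO_{X'}(E)|_E$ restricts to $\cO(-1)$ on \emph{every} such fiber simultaneously, not just fiberwise up to isomorphism. Since $Y_1$, hence $B^+_1$ and $V^\pm_1$, may be disconnected with components of different dimensions (Remark~\ref{rem:B1irred}), one argues component by component: each $E_Y\simeq B^+(Y)\times_Y V^-(Y)$ is handled separately, the contraction is performed on each $E_Y$, and the resulting local blowdowns glue since the $B^+(Y)$ are pairwise disjoint (Proposition~\ref{prop:SQM1}). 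A secondary subtlety is that Nakano's theorem is stated in the analytic category, so $\SX^-$ is produced only as a compact complex manifold at this stage — which is exactly why the theorem is phrased for a ``smooth proper variety'' and projectivity is deferred to Section~\ref{ssec:projectivity}; here I would simply be careful to claim no more than compactness and smoothness of $\SX^-$.
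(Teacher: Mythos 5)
Your proposal is correct and follows essentially the same route as the paper: both reduce to Nakano's contractibility criterion applied component by component to $E_Y\simeq B^+(Y)\times_Y V^-(Y)$, verify $\cO_{X'}(E)|_{F'}\simeq\cO_{F'}(-1)$ on fibers of $E_Y\to V^-(Y)$ by viewing $F'$ as a section over a fiber $F$ of $B^+(Y)\to Y$ and invoking $\cN_{B^+(Y)|X}|_F\simeq\cO_F(-1)^{\oplus\nu^-(Y)}$, and then handle $b'|_{Y_0'}$ via Corollary~\ref{cor:fornakano} and a second application of Nakano. Your remarks on disconnectedness of $Y_1$ and on the analytic (rather than projective) nature of $\SX^-$ at this stage match the paper's own caveats.
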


\begin{proof}
The divisor $E$ is not necessarily connected, but we are not claiming yet that $\SX^-$ is projective, so the statement is local: we will show that there exists a holomorphic contraction $b':X' \to \SX^-$ that is a smooth blowup on an analytic neighborhood of every component $E_Y$ of $E$, mapping $E_Y=B^+(Y)\times_Y V^-(Y)$ onto a variety isomorphic to $V^-(Y)$. Let us denote by $F'\simeq \P^{\nu^+(Y)}$ a fiber of the projective bundle $\bar p:E_Y \to V^-(Y)$. By Nakano Contractibility Criterion (\cite[Theorem~3.2.8]{BS}) for the first part of the statement it is enough to show that 
$$
E_{Y|F'}\simeq \cO_{F'}(-1).$$
This follows by noting that, by Proposition \ref{prop:fornakano}, $F'$ is a section of $E_Y \to B^+(Y)$ over its image, that we denote by $F$, and $F$ is a fiber of  $p:B^+(Y)\to Y$ (see Proposition \ref{prop:fornakano}). By Proposition \ref{prop:SQM1}, $(\cN_{B^+(Y)|X})_{|F}\simeq \cO_F(-1)^{\oplus \nu^-(Y)}$, and so the above equality follows by a straightforward computation. 

For the second part of the statement we note that, by Corollary \ref{cor:fornakano}, $b'$ maps $E_Y\cap Y_0'$ onto a subvariety  of $b'(Y_0')$ isomorphic to $V^-(Y)$, for every component $Y\subset Y_1$. The fact that $b'_{|Y_0'}$ is again a smooth blowup follows again by Nakano's Criterion and by the first part of Corollary \ref{cor:fornakano}.
\end{proof}

Let us define the rational map $\psi^-: X \dashrightarrow \SX^-$ as follows:
$$
\psi^-:=b'\circ b^{-1}.
$$
Its indeterminacy locus is $B^+_1$ by construction. We will now prove that the $\C^*$-action on $X'$ descends to $\SX^-$, and identify the corresponding sink. 

\begin{corollary}\label{cor:SX-}
The map $b':X'\to\SX^-$ induces a B-type equalized $\C^*$-action on $\SX^-$, whose sink is isomorphic to $\GX(1,2)$.
\end{corollary}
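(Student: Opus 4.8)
The plan is to extract everything from the explicit blow-up picture of Corollaries~\ref{cor:SQM1} and~\ref{cor:fornakano} together with the combinatorics of Bia{\l}ynicki-Birula cells. First, for the descent of the action: since $B^+_1$ is $\C^*$-invariant the action lifts to $b\colon X'\to X$, and $E$ is invariant. I would observe that every fibre $F'\cong\P^{\nu^+(Y)}$ of the projection $b'|_E\colon E\simeq B^+_1\times_{Y_1}V^-_1\to V^-_1$ is $\C^*$-invariant: by Proposition~\ref{prop:fornakano} it maps isomorphically onto a fibre of $p\colon B^+(Y)\to Y$, which is $\C^*$-invariant because the action on $B^+(Y)=\P(\cN^+(Y)^\vee\oplus\cO_Y)$ is fibrewise over $Y$, while $F'$ lies over a point of $V^-(Y)=\P(\cN^-(Y)^\vee)$, on which $\C^*$ acts trivially by Lemma~\ref{lem:equalized}. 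Since $b'$ is a blow-up it has connected fibres and $b'_*\cO_{X'}=\cO_{\SX^-}$, so as $\C^*$ is connected the action descends uniquely to $\SX^-$ in such a way that $b'$ --- and hence $\psi^-=b'\circ b^{-1}$ --- is $\C^*$-equivariant.

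Next I would settle the B-type and equalized properties. By Proposition~\ref{prop:SQM1} the $\C^*$-fixed locus of $B^+(Y)$ is $Y\sqcup V^+(Y)$, so $B^+_1$ meets the fixed locus of $X$ only along $Y_1$ and along $V^+_1=B^+_1\cap Y_0$; hence $b$ and $b'$ alter $X$ only over a neighbourhood of $Y_0\cup Y_1$, the components $Y_2,\dots,Y_r$ surviving unchanged in $X'$ and, by the previous paragraph, in $\SX^-$. In particular the source of $\SX^-$ is the divisor $Y_r$ and, since $b'$ contracts only $E$, the sink of $\SX^-$ is the divisor $b'(Y_0')$, so the action is of B-type. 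For equalization I would use that $\psi^-$ restricts to a $\C^*$-equivariant isomorphism $X\setminus B^+_1\xrightarrow{\ \sim\ }\SX^-\setminus V^-_1$: equalization of $\SX^-$ away from $V^-_1$ follows from that of $X$, which already takes care of the inner and source components, and along the sink one only has to compute the weight of the line bundle $\cN_{b'(Y_0')|\SX^-}$ on the dense open subset $b'(Y_0')\setminus V^-_1\cong Y_0\setminus V^+_1$, where $\psi^-$ is an isomorphism and this weight equals that of $\cN_{Y_0|X}$, namely $-1$. Lemma~\ref{lem:equalized} then gives that the action on $\SX^-$ is equalized.

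It remains to identify the sink with $\GX(1,2)$, which is the main point. Being B-type and equalized, $\SX^-$ has $\cN^-(b'(Y_0'))=\cN_{b'(Y_0')|\SX^-}$ a line bundle of $\C^*$-weight $-1$, so the punctured Bia{\l}ynicki-Birula cell $U:=X^{-}_{\SX^-}(b'(Y_0'))\setminus b'(Y_0')$ is a $\C^*$-torsor over $b'(Y_0')$, with $U/\C^*\simeq b'(Y_0')$. On the other hand $\GX(1,2)=X^{\ss}(1,2)/\C^*$ is a geometric quotient, and from Construction~\ref{cons:sections} (together with $\nu^+(Y_0)=0$) one sees that $X^{\ss}(1,2)$ consists exactly of the one-dimensional orbits of $X$ whose source lies in $\bigcup_{i\ge 2}Y_i$ and whose sink lies in $Y_0\cup Y_1$; by Proposition~\ref{prop:SQM1} these orbits avoid $B^+_1$, so $X^{\ss}(1,2)\subset X\setminus B^+_1$. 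The crucial verification is that $\psi^-$ carries $X^{\ss}(1,2)$ onto $U$: for such an orbit $C$, either its sink lies in $Y_0\setminus V^+_1$, where $\psi^-$ is an isomorphism, or its sink lies in $V^+_1$ (resp.\ in $Y_1$), and then the strict transform of $\overline C$ in $X'$ meets $E$ over that point, whence by Corollaries~\ref{cor:fornakano} and~\ref{cor:SQM1} the intersection is sent by $b'$ into $V^-_1\subset b'(Y_0')$; in every case $\psi^-(C)$ has its sink in $b'(Y_0')$, so $\psi^-(X^{\ss}(1,2))\subseteq U$, and the reverse inclusion is obtained symmetrically through $(\psi^-)^{-1}$ using the (short) list of fixed components of $\SX^-$. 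Thus $\psi^-$ induces a $\C^*$-equivariant isomorphism $X^{\ss}(1,2)\simeq U$, and passing to quotients $\GX(1,2)=X^{\ss}(1,2)/\C^*\simeq U/\C^*\simeq b'(Y_0')$, the sink of $\SX^-$.

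I expect the genuinely delicate point to be precisely this orbit-by-orbit matching of $X^{\ss}(1,2)$ with $U$ across the blow-up $b$ and the blow-down $b'$ in a neighbourhood of $V^+_1$ and $Y_1$; the identifications $E\cap Y_0'\simeq V^+_1\times_{Y_1}V^-_1$ and $b'|_E\colon E\to V^-_1$ provided by Corollaries~\ref{cor:fornakano} and~\ref{cor:SQM1} are exactly what makes it go through. By contrast, the descent of the action and the B-type and equalized statements are essentially formal.
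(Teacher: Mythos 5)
Your proof is correct and follows essentially the same route as the paper's: descend the action via the $\C^*$-invariance of the fibres of $b'$, obtain the B-type and equalized properties from the facts that the modification only alters a neighbourhood of $Y_0\cup Y_1$ and that $b'(E)=V^-_1$ lies inside the new sink (so no one-dimensional orbit meets the modified locus), and identify the sink with $\GX(1,2)$ by matching the orbits of $X^{\ss}(1,2)$ with those flowing into $b'(Y_0')$. The only difference is cosmetic and occurs at the very end: the paper packages the orbit-matching as a bijective morphism $\GX(1,2)\to\psi^-(Y_0)$ and concludes via normality of the smooth target, whereas you exhibit a $\C^*$-equivariant isomorphism $X^{\ss}(1,2)\simeq U$ and invoke uniqueness of geometric quotients together with the torsor structure of $U$ over the sink --- both versions hinge on the same key geometric fact, namely that an orbit whose sink in $X$ lies in $Y_1$ (or in $V^+_1$) acquires, after the modification, its sink at the point of $V^-_1$ determined by its normal direction.
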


\begin{proof}
The restriction of the $\C^*$-action on $X'$ to every component $E_Y=B^+(Y)\times_Y V^-(Y)$ is the fiber product of the natural action of $\C^*$ on $B^+(Y)$ and $V^-(Y)$. In particular, the fibers of $b'$ are $\C^*$-invariant, and the action descends to $\SX^-$. 

Note that a priori we do not know if $\SX^-$ is projective; by saying that the action of $\C^*$ on it is of B-type and equalized we mean that the sink and the source of the action are divisors, and that the isotropy group of every point in a $1$-dimensional orbit is trivial. 

The B-type property is clear since the extremal fixed point components of $\SX^-$ are $\psi^-(Y_0)$ and $\psi^-(Y_r)$. In order to see that the action is equalized we observe that the image of $E$ into $\SX^-$ is contained in the image of $Y_0'$ (see Corollary \ref{cor:SQM1}); this implies that the $1$-dimensional orbits of $\SX^-$ do not meet $b'(E)$, and so they are mapped isomorphically into $X$. In particular, the isotropy groups of the points in these orbits are trivial. 

In order to study the sink $\psi^-(Y_0)$ of $\SX^-$, we note first that the open subset $X^{\ss}(1,2)\subset X$ maps isomorphically into $X'$ and $\SX^-$. By construction, the image $\psi^-(X^{\ss}(1,2))$ contains all the $1$-dimensional orbits in $\SX^-$ whose limit at $\infty$ belongs to $\psi^-(Y_0)$; more precisely, we claim that for every point $P\in \psi^-(Y_0)$ there exists a unique orbit in $\psi^-(X^{\ss}(1,2))$ whose limit at $\infty$ is $P$. This is obvious if $P\in \psi^-(X\setminus B^+_1)$, so we are left to study the case in which $P$ belongs to the complementary of this set in $\psi^-(Y_0)$, which is isomorphic to $V^-_1=\P(\cN^-(Y_1)^\vee)$. Denoting by $p:V^-_1\to Y_1$ the natural projection, given a point $P\in V^-_1=\P(\cN^-(Y_1)^\vee)$, by the Bia{\l}ynicki-Birula decomposition, there exists a unique orbit in $X^{\ss}(1,2)$ whose limit at $\infty$ is the point $p(P)$ with tangent direction $P$. This completes the proof of the claim.  

In particular, we have a bijective morphism from the quotient $\GX(1,2)$ to $\psi^-(Y_0)$, sending every orbit in $X^{\ss}(1,2)$ to the limit at $\infty$ of its image in $\SX^-$; this map is bijective by the above claim. Since $\psi^-(Y_0)$ is smooth (by Corollary \ref{cor:SQM1}) and $\GX(1,2)$ is normal, the map is an isomorphism.  
\end{proof}

\subsection{Projectivity of $\SX^-$}\label{ssec:projectivity}

Let $b:X'\to X$ be the blowup of $X$ along $B^+_1$,  $E$ the exceptional divisor and $Y_0'$ the strict transform of $Y_0$.
In order to prove that the holomorphic map $b':X'\to \SX^-$ provided by Corollary \ref{cor:SQM1} is  projective, we will show that it admits a supporting $\Q$-divisor, i.e., there exists a nef $\Q$-divisor  on $X'$ having intersection number zero exactly on the curves contracted by $b'$:

\begin{proposition}\label{prop:suppdiv}
Under the assumptions of Theorem \ref{thm:flip}, the $\Q$-divisor
$$
D':=b^*(D-\tau Y_0)-(\tau-a_1)E,\quad \tau\in(a_1,a_2)\cap \Q,
$$ is a supporting divisor of $b'$.
\end{proposition}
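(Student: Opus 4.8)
The plan is to verify the two defining properties of a supporting divisor for the contraction $b'\colon X'\to\SX^-$: first that $D'$ is nef on $X'$, and second that a curve $C\subset X'$ satisfies $D'\cdot C=0$ precisely when $C$ is contracted by $b'$. By Corollary \ref{cor:SQM1}, $b'$ is a smooth blowup along (a subvariety isomorphic to) $V^-_1$ with exceptional divisor $E$, and $b'_{|E}$ is the bundle projection $E\simeq B^+_1\times_{Y_1}V^-_1\to V^-_1$; so the curves contracted by $b'$ are exactly those contained in a fibre $F'\simeq\P^{\nu^+(Y)}$ of $\bar p\colon E_Y\to V^-(Y)$, for some component $Y\subset Y_1$. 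Thus the computation reduces to restricting $D'$ to such a fibre $F'$: I would use Proposition \ref{prop:fornakano} (which identifies $F'$ with a fibre $F$ of $p\colon B^+(Y)\to Y$ via the section $E_Y\to B^+(Y)$) together with Proposition \ref{prop:SQM1} (which gives $\cN_{B^+(Y)|X}|_F\simeq\cO_F(-1)^{\oplus\nu^-(Y)}$) to see that $E_{|F'}\simeq\cO_{F'}(-1)$, that $b^*(D-\tau Y_0)|_{F'}\simeq\cO_{F'}$ since $F$ is contracted by $b$ to a point of $B^+_1$ (and $D-\tau Y_0$ is well-defined on $X$), and hence $D'\cdot C=0$ for every line $C\subset F'$. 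This shows $D'$ is trivial on the $b'$-contracted curves.

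Next I would show $D'$ is nef and strictly positive on all curves not contracted by $b'$. The natural route is to pass through $\SX^-$: one wants $D'=b'^{*}D^-_\tau$ for a $\Q$-divisor $D^-_\tau$ on $\SX^-$ that is ample. Since $b'$ is a blowup and $D'$ is trivial on its fibres, $D'$ descends to a $\Q$-Cartier divisor $D^-_\tau$ on $\SX^-$ (the underlying variety is smooth by Corollary \ref{cor:SQM1}); identifying $D^-_\tau$ with $\psi^-_*(D-\tau Y_0)$ and invoking the birational identification of $\SX^-$ in codimension one with the $\P^1$-bundle model over $\GX(1,2)$ (Proposition \ref{prop:P1model}, whose hypothesis $(\dagger)$ is guaranteed here by $\nu^-(Y)>1$ for weight-$a_1$ components together with $r\ge2$), ampleness of $D^-_\tau$ should follow. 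Concretely, the cleanest argument is via Lemma \ref{lem:projections2} / Corollary \ref{cor:BSL}: the sections of $mL(\tau_-,\tau_+)$ with $\tau_-=\tau_+=\tau$ correspond to the single-weight eigenspace $\HH^0(X,mL)_{m\tau}$, whose base locus on $X^\flat$ is $\Bss^\flat_{i(\tau)j(\tau)}$; pulling this description through the blowup $b$ and pushing forward through $b'$, one checks that the linear system $|mD^-_\tau|$ is base-point free and separates points and tangents for $m\gg0$, hence $D^-_\tau$ is ample. Then $D'=b'^{*}D^-_\tau$ is nef, and it is strictly positive on every curve not contracted by $b'$.

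The main obstacle I expect is not the fibre computation (that is a short Chern-class bookkeeping) but establishing ampleness of the descended divisor $D^-_\tau$ on $\SX^-$ rigorously — i.e.\ controlling the positivity of $b^*(D-\tau Y_0)-(\tau-a_1)E$ in directions transverse to the exceptional locus, near the strict transform $Y_0'$ of the sink. Here one must use the equalization hypothesis and the AM vs.\ FM formula (Lemma \ref{lem:AMFM}) to see that $D-\tau Y_0$ has the right (positive) degree on the $1$-dimensional orbits passing through $Y_0$ once $\tau<a_2$, and combine this with the base-locus analysis of Corollary \ref{cor:BSL} to promote nefness to ampleness. A clean way to package this is: show $D'$ is $b'$-trivial and nef on $X'$, then apply the basepoint-free-type argument (or Nakai–Moishezon on $\SX^-$ directly, using that every curve in $\SX^-$ lifts to one in $X'$ on which $D'$ is strictly positive unless it is $b'$-contracted, in which case it does not live in $\SX^-$ at all) to conclude $D^-_\tau$ is ample; then $D'=b'^*D^-_\tau$ is the desired supporting divisor. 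I would present the fibrewise triviality first, the nefness second, and the ampleness of the pushforward last.
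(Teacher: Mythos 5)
There is a genuine gap, on two counts.

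First, in your fibrewise computation the claim that $b^*(D-\tau Y_0)|_{F'}\simeq \cO_{F'}$ ``since $F$ is contracted by $b$ to a point of $B^+_1$'' is false: $b$ is the blowup of $X$ \emph{along} $B^+_1$, and by Proposition \ref{prop:fornakano} the fibre $F'$ of $\bar p:E_Y\to V^-(Y)$ is a \emph{section} of $E_Y\to B^+(Y)$ over its image $F$, so $b$ maps $F'$ isomorphically onto the positive-dimensional fibre $F\simeq\P^{\nu^+(Y)}$ of $p:B^+(Y)\to Y$. On a line $\ell\subset F$ through $x_1=F\cap Y$ (the closure of an orbit joining $Y_0$ and $Y_1$) Lemma \ref{lem:AMFM} gives $(D-\tau Y_0)\cdot\ell=a_1-\tau\neq 0$; the vanishing $D'\cdot C=0$ on the corresponding line $C\subset F'$ holds only because this term is cancelled by $-(\tau-a_1)\,E\cdot C=\tau-a_1$, using $E_{|F'}\simeq\cO_{F'}(-1)$. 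Your conclusion is correct, but the justification offered for it is not.

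Second, and more seriously, the positivity of $D'$ on the non-contracted curves --- which you yourself flag as the main obstacle --- is never actually established. Strict positivity on every non-contracted curve does not by itself make the descended divisor ample: Kleiman's criterion requires positivity on the closed Mori cone, and Nakai--Moishezon requires all subvarieties, not only curves. The base-point-freeness you hope to extract from Corollary \ref{cor:BSL} is also not available on the nose, since the relevant eigensystems on $X$ have nonempty base locus $\Bss_{i(\tau)j(\tau)}$; showing that this locus is exactly resolved by $\psi^-$ is essentially the statement to be proved. The paper closes the gap with two inputs absent from your outline: (i) the Mori cone of $X'$ is generated by classes of $\C^*$-invariant curves (\cite[Lemma~1.5]{RW}), which reduces the verification to an explicit list --- orbit closures, handled by Lemma \ref{lem:AMFM}, and curves inside fixed components, the delicate case of curves in $Y_1'=V^-_1$ being settled by comparison with curves in $Y_0'\cap E$ inside the projective bundle $E\to V^-_1$ (Lemma \ref{lem:suppdiv}); and (ii) for curves in the strict transform $Y_0'$, the identification of $b'_{|Y_0'}$ with the resolution of the natural map between the GIT quotients $Y_0\simeq\GX(0,1)$ and $\GX(1,2)$, which is induced by an explicit linear system and therefore determines the supporting divisor $b^*\big((D-\tau Y_0)_{|Y_0}\big)-(\tau-a_1)(E\cap Y_0')$ on $Y_0'$. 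Without these two steps the proposal does not amount to a proof.
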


The proof of this fact is done in two steps. First we show (Lemma \ref{lem:suppdiv}) that the statement can be reduced to analyzing the behavior of this divisor in $Y_0'$; then the proof is concluded by interpreting $b'_{|Y_0'}$ as a resolution of the natural map among two geometric quotients of $X$.

\begin{lemma}\label{lem:suppdiv} If $D'_{|Y_0'}$ is a supporting $\QQ$-divisor for $b'_{|Y_0'}$, then $D'$ is a supporting $\QQ$-divisor for $b'$.
\end{lemma}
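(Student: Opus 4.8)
The plan is to show that $D'$ is a nef $\QQ$-divisor on $X'$ whose null locus (the locus of curves with zero intersection) is exactly the set of curves contracted by $b'$, assuming this has been verified after restriction to $Y_0'$. The key structural facts I would use are those established in Corollaries \ref{cor:SQM1} and \ref{cor:fornakano}: the exceptional divisor $E$ of $b$ decomposes as $E \simeq B^+_1\times_{Y_1}V^-_1$, the map $b'$ contracts each component $E_Y \simeq B^+(Y)\times_Y V^-(Y)$ along its $\PP^{\nu^+(Y)}$-fibration onto $V^-(Y)$, and crucially the image $b'(E)$ is contained in $b'(Y_0')$. This last containment is the geometric heart of the matter: it means that the curves contracted by $b'$ — which are lines inside the fibers $F'\simeq\PP^{\nu^+(Y)}$ of $\bar p\colon E_Y\to V^-(Y)$ — all lie in $Y_0'$.

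**The main steps.** First I would check that $D'$ restricted to any fiber $F'$ of $\bar p$ is trivial (degree zero on lines in $F'$); since $F'\subset Y_0'$ this follows from the hypothesis that $D'_{|Y_0'}$ is supporting for $b'_{|Y_0'}$, because $b'_{|Y_0'}$ contracts exactly these lines. Next, to get nefness of $D'$ on all of $X'$, I would argue that any irreducible curve $C\subset X'$ with $D'\cdot C<0$ or $D'\cdot C=0$ must be handled as follows: write $D' = b^*(D-\tau Y_0) - (\tau-a_1)E$; since $D-\tau Y_0 = D^-_\tau$ pulled back is nef (indeed the point is that $D^-_\tau$ will be ample on $\SX^-$, but we cannot assume that yet), and $-(\tau-a_1)E$ is the only negative contribution, a curve on which $D'$ is non-positive must meet $E$. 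Then one restricts attention to curves in $E$, or more precisely to the numerical analysis on $E_Y$: using the projective bundle structures $E_Y\to B^+(Y)$ and $E_Y\to V^-(Y)$ from Proposition \ref{prop:fornakano} together with the normal bundle computation $\cN_{B^+(Y)|X}|_F\simeq\cO_F(-1)^{\oplus\nu^-(Y)}$ of Proposition \ref{prop:SQM1}, one computes the restriction $D'_{|E_Y}$ explicitly and shows it is a pullback of a nef class from $V^-(Y)$ plus a relatively ample correction, pinning down its null locus inside $E_Y$ to be exactly the fibers of $\bar p$. Because those fibers sit inside $Y_0'$, the restricted statement $D'_{|Y_0'}$ supporting for $b'_{|Y_0'}$ then upgrades: the curves contracted by $b'$ coincide with the curves in $Y_0'$ contracted by $b'_{|Y_0'}$, and $D'$ vanishes on precisely these and is positive elsewhere.

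**Concluding via a contraction theorem.** Once $D'$ is shown to be nef with null locus equal to the $b'$-contracted curves, I would invoke the appropriate contraction/base-point-freeness statement — in the holomorphic category this is again the kind of Nakano-type or Kawamata–Shokurov-style argument already in play in Corollary \ref{cor:SQM1} — to conclude that some multiple of $D'$ is globally generated and defines $b'$, hence that $b'$ is projective and $D'$ is a genuine supporting divisor. Alternatively, and perhaps more cleanly in this setup, since $b'$ is already known to be a holomorphic smooth blowdown, the statement "$D'$ is a supporting divisor" just amounts to: $D'\cdot C = 0$ for every curve $C$ contracted by $b'$, and $D'\cdot C>0$ otherwise, which is exactly the numerical bookkeeping described above.

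**Expected main obstacle.** The delicate point is controlling the sign of $D'\cdot C$ for curves $C$ that meet $E$ but are \emph{not} contained in $Y_0'$ and are \emph{not} fibers of $\bar p$ — one must rule out that the $-(\tau-a_1)E$ term makes $D'$ negative on such curves. This is where the transversality of $B^+_1$ with $Y_0$ and the precise coefficient $\tau - a_1$ (with $\tau\in(a_1,a_2)$) enter: the coefficient is calibrated exactly so that the blowup correction is absorbed. Making this absorption precise — i.e. showing $D' = b^*(D-\tau Y_0) - (\tau-a_1)E$ has the claimed null locus and no other — is the technical crux, and it is precisely what gets reduced to the $Y_0'$ computation, since on $Y_0'$ the divisor $D'_{|Y_0'}$ is identified (via $b'_{|Y_0'}$ being a resolution of the natural map between the geometric quotients $\GX(0,1)\dashrightarrow\GX(1,2)$) with a pullback of an ample class.
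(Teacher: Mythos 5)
Your overall target is the right one --- since $b'$ is already a holomorphic smooth blowdown by Corollary \ref{cor:SQM1}, being a supporting divisor is the purely numerical statement that $D'$ is nef and vanishes exactly on the $b'$-contracted curves --- but two of the steps you use to reach it are not correct. First, the fibres $F'\simeq\P^{\nu^+(Y)}$ of $\bar p\colon E_Y\to V^-(Y)$ are \emph{not} contained in $Y_0'$: by Corollary \ref{cor:fornakano}, $E_Y\cap Y_0'\simeq V^+(Y)\times_Y V^-(Y)$ is only a $\P^{\nu^+(Y)-1}$-bundle over $V^-(Y)$, i.e.\ each fibre of $\bar p$ meets $Y_0'$ in a hyperplane (a point, if $\nu^+(Y)=1$). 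The containment $b'(E)\subset b'(Y_0')$ is a statement about images in $\SX^-$ and does not give $F'\subset Y_0'$ in $X'$. So the vanishing $D'\cdot\ell^-=0$ on a line $\ell^-$ in a fibre of $\bar p$ cannot be deduced from the hypothesis on $D'_{|Y_0'}$; it has to be computed directly, as the paper does, from $E\cdot\ell^-=-1$ (Nakano) and $(D-\tau Y_0)\cdot b_*\ell^-=a_1-\tau$ (Lemma \ref{lem:AMFM}).

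Second, and more seriously, your reduction of nefness to curves in $E$ has no support: $b^*(D-\tau Y_0)$ is \emph{not} nef (it has degree $a_1-\tau<0$ precisely on the strict transforms of orbit closures joining $Y_0$ and $Y_1$), and even for a curve $C$ with $D'\cdot C\le 0$ that merely \emph{meets} $E$ you cannot ``restrict attention to curves in $E$''. This leaves all curves of $X'$ not contained in $E$ unaccounted for. The paper closes exactly this gap with a structural input you do not invoke: $\cNE{X'}$ is generated by classes of $\C^*$-invariant curves, so it suffices to test $D'$ on orbit closures (sorted by the position of their sink and source) and on curves inside the fixed-point components, each of which is handled by the AM vs.\ FM formula. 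After that reduction the only nontrivial case is a curve $C\subset Y_1'=V_1^-$, and this is where the hypothesis actually enters: one produces a curve $C'\subset Y_0'\cap E$ with $C'-mC$ proportional to $[\ell^-]$, so that $D'\cdot C\le 0$ would force $D'\cdot C'=0$, hence (by the supporting hypothesis on $Y_0'$) $C'$ numerically proportional to $\ell^-$, contradicting that $C'$ dominates $C$. Your idea of writing $D'_{|E_Y}$ as $\bar p^*$ of a class on $V^-(Y)$ and identifying that class with the restriction of the ample class pulled back by $b'_{|Y_0'}$ could be made into an alternative for this last case, but as written the proposal neither carries that out nor controls the curves outside $E$.
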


\begin{proof}
The Mori cone of $X'$ is generated by the numerical classes of $\C^*$-invariant curves (cf. \cite[Lemma~1.5]{RW}), that is by the classes of closures of $1$-dimensional orbits and by the classes of curves contained in the fixed point components $Y_i'$. Let us note that $Y_i'=Y_i$ for $i \ge 2$ and $Y_1'=V_1^-$. 
It is easy to show, by using Lemma \ref{lem:AMFM}, that $D' \cdot C >0$ if $C$ is:
\begin{itemize}
\item the closure of an orbit having sink on $Y'_i$, $i\geq 1$; in this case $D'\cdot C=b^*D\cdot C>0$ if $i \ge 2$ and $D'\cdot C\geq(a_2-a_1)+(a_1-\tau)>0$ if $i=1$;
\item the closure of an orbit  having  sink on $Y'_0$, and source on $Y'_i$, $i\geq 2$, for which $D'\cdot C=a_i-\tau>0$;
\item a curve contained in $Y'_i$, $i\geq 2$, for which we have $D'\cdot C=b^*D\cdot C>0$.  
\end{itemize}
Moreover $D' \cdot C=0$ if $C$ is  is the closure of an orbit joining $Y'_0$ with $Y'_1$, since, by
Corollary \ref{cor:SQM1}, $E\cdot C=-1$, hence $D'\cdot C=a_1-\tau-(\tau-a_1)(-1)=0$.

We are left to study the classes of curves contained in $Y_1'=V^-_1$. 
We have shown (Corollary \ref{prop:fornakano}) that the exceptional divisor $E\subset X'$ is a projective bundle over $V^-_1$, containing $V^-_1$ as a section. If $C\subset V^-_1$ were an irreducible curve such that $D'\cdot C\leq 0$, we could find an irreducible curve $C'$ in $Y_0' \cap E$ such that, for some positive integer $m$ the cycle $C'-mC$ would be numerically proportional to the class $[\ell^-]$ of a line in a fiber of $E\to V^-_1$.
 Since $D'\cdot \ell^-=0$,  we would have $D'\cdot C'\leq 0$, hence $D' \cdot C'=0$ and it would follow that $C'$ is numerically proportional to $\ell^-$. Hence also $C$ would be numerically proportional to $\ell^-$, and therefore contracted by the projection $E\to V^-_1$, a contradiction.
\end{proof}

\begin{proof}[Proof of Proposition \ref{prop:suppdiv}]
By means of Corollary \ref{cor:SX-}, let us identify the sink of $\SX^-$ with $\GX(1,2)$. Consider the rational map $\psi^-_{|Y_0}:Y_0\dashrightarrow \GX(1,2)\subset \SX^-$. As shown in the proof of Corollary \ref{cor:SX-}, this is the natural map among the geometric quotients $Y_0\simeq\GX(0,1)$, $\GX(1,2)$. In particular, it coincides with the restriction of the quotient map $X\dashrightarrow \GX(1,2)$. 

Since the action of $X$ is of B-type (so its blowup along its sink and source is an isomorphism), by applying Proposition \ref{prop:semisection2} and Lemma \ref{lem:projections2}, the quotient map $X\dashrightarrow \GX(1,2)$ is given by a linear system of the form $|m(D-\tau Y_0-(\delta-\tau)Y_r)|$, for $\tau\in \Q\cap(a_1,a_2)$, and $m$ large enough and divisible. Therefore $\psi^-_{|Y_0}$  is given by a linear subsystem of $|m(D-\tau Y_0)_{|Y_0}|$.

Since this rational map is resolved via the blowup $b$, a supporting divisor of $b':Y_0'\to \GX(1,2)$ will be of the form 
$$
b^*((D-\tau Y_0)_{|Y_0})-r E_0,
$$
for some $r\in \Q$. Imposing that this divisor has degree zero on a curve contracted by $b'$, we easily get $r=\tau-a_1$. Then the statement follows by Lemma \ref{lem:suppdiv}.
\end{proof}

\subsection{The fixed point components of $\SX^-$}\label{ssec:properties} 

First of all we note that, from the results in Sections \ref{ssec:existence}, \ref{ssec:projectivity}, we already know that the rational map $\psi^-:X\dashrightarrow \SX^-$ has indeterminacy locus $B^+_1$, and that it is $\C^*$-equivariant with respect to an action on $\SX^-$ that is of B-type and equalized. From Proposition \ref{prop:suppdiv} and applying $b'_*$, we obtain that $D^-_\tau:=\psi^-_*(D-\tau Y_0)$ is ample on $\SX^-$, for every $\tau\in \Q\cap (a_1,a_2)$. Then we are left with studying the fixed point components of $\SX^-$, and their weights with respect to $D^-_\tau$.

Note that there exists a linearization of the $\C^*$-action on the line bundle $\cO_X(Y_0)$ satisfying that $\mu_{\cO_X(Y_0)}(Y_0)=0$. Then, by Lemma \ref{lem:AMFM}, we have that $\mu_{\cO_X(Y_0)}(Y)=1$ for any fixed point component $Y\neq Y_0$.

The fixed components in $\SX^-\setminus \psi^-(Y_0)$ correspond isomorphically via $\psi^-$ to the fixed components in $X\setminus B^+_1$, that is to $\{Y\in\cY|\mu_D(Y)>a_1\}$, and the ranks $\nu^\pm$ are obviously preserved by this correspondence. By definition, on each of these components we have 
$$\mu_{D^-_\tau}(\psi^-(Y))=\mu_{D-\tau Y_0}(Y)=\mu_D(Y)-\tau.$$ 
On the other hand the $D^-_\tau$-weight of the action at $\psi^-(Y_0)$ can be computed at its general point, for which 
$$\mu_{D^-_\tau}(\psi^-(Y_0))=\mu_{D-\tau Y_0}(Y_0)=\mu_D(Y_0)=0.$$
This finishes the proof of Theorem \ref{thm:flip}.

%!TEX root = WORS3.tex

\section{Equalized actions and Mori dream spaces}\label{sec:movbordism}

In this section we will show that, under some assumptions, the blowup of a smooth projective variety $X$ supporting an equalized $\C^*$-action is an MDS; we will study its movable cone, as well as its small $\Q$-factorial modifications. 
Although our arguments should work in broader generality, for the sake of clarity we will stick to the case in which the Picard number of $X$ is one. We will not assume that the sink and the source of the action are positive dimensional; in that particular case, our arguments provide a proof for Theorem \ref{thm:main}. From now on we will work under the following assumptions:

\begin{setup}\label{ass:picZ}
$X$ is a smooth projective variety of Picard number one, $L\in\Pic(X)$ is ample, and there exists an equalized $\C^*$-action on $X$ together with a linearization of the action on $L$. We will assume that $X$ is not $\P^n$ endorsed with the faithful $\C^*$-action that fixes a point and a disjoint hyperplane. 
We will denote by $0=a_0<\dots <a_r=\delta$  the weights of the action on the fixed point components, by $\beta:X^\flat\to X$ the blow-up of $X$ along the sink $Y_0$ and the source $Y_r$, and by $Y^\flat_0,Y^\flat_r\subset X^\flat$ the corresponding exceptional divisors. 
Following \cite[Lemma~3.10]{WORS1}, the action of $\C^*$ on $X$ extends equivariantly via $\beta$ to a B-type action on $X^\flat$. 
\end{setup} 

\begin{remark}\label{rem:RC}
We note that a variety $X$ as in Setup \ref{ass:picZ} is rationally connected (see \cite[Remark~2.3]{WORS1}), in particular $\Pic(X)$ is torsion free, and $\HH^1(X,\cO_X)=0$. 
\end{remark}

\begin{remark}\label{rem:extremaldiv}
Saying that a variety $X$ with Picard number one is not $\P^n$ together with the $\C^*$-action with a fixed point and a fixed hyperplane is equivalent to say that neither the sink nor the source of the action are divisors (cf. \cite[Lemma~6.3]{WORS2}). 
Note also that in the excluded case, $X^\flat$ is the blowup of $\P^n$ along a point, which does not have nontrivial small $\Q$-factorial modifications.
\end{remark}

Before presenting our description of the birational geometry of $X^\flat$, we will need some preliminary results on the fixed point components of $X$. Let us start by recalling the following statement (cf. \cite[Lemma~2.6(2), Lemma~3.10]{WORS1}):

\begin{lemma}\label{lem:bordism2}
Let $(X,L)$ be as in Setup \ref{ass:picZ}. Then the $\C^*$-action on $X^\flat$ is a bordism if and only if $\dim(Y_0),\dim(Y_r)>0$. 
\end{lemma}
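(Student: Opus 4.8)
\textbf{Proof plan for Lemma \ref{lem:bordism2}.}

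The plan is to combine the characterization of bordisms given in Lemma \ref{lem:bordism} — a B-type equalized action is a bordism precisely when $\nu^\pm(Y)\geq 2$ for every inner fixed point component $Y$ — with a careful bookkeeping of how the fixed point components of $X^\flat$ arise from those of $X$. The key point is that the inner fixed point components of the $\C^*$-action on $X^\flat$ are exactly the inner fixed point components $Y_1,\dots,Y_{r-1}$ of $X$ (which are unaffected by $\beta$, since $\beta$ is an isomorphism away from $Y_0\cup Y_r$), together with the two exceptional divisors $Y_0^\flat$ and $Y_r^\flat$, which become the new sink and source and hence are \emph{not} inner. Therefore, by Lemma \ref{lem:bordism}, the action on $X^\flat$ is a bordism if and only if $\nu^\pm(Y)\geq 2$ for all of $Y_1,\dots,Y_{r-1}$, and I must show this condition is equivalent to $\dim(Y_0),\dim(Y_r)>0$.

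First I would handle the forward direction by contraposition: suppose $\dim(Y_0)=0$ (the source case being symmetric via the inversion $t\mapsto t^{-1}$). Then $Y_0$ is an isolated fixed point, so $B^-(Y_0)=X^-(Y_0)$ is a single point and $X^+(Y_0)\simeq \cN^+(Y_0)$ has dimension $n=\dim X$; in particular $\nu^+(Y_0)=n$ and $X^+(Y_0)$ is dense. Since $X$ has Picard number one and is not the excluded $\P^n$, neither the sink nor the source is a divisor (Remark \ref{rem:extremaldiv}), so $\dim(Y_r)\leq n-2$ as well, forcing $r\geq 2$ and the existence of inner components. The obstruction to being a bordism must now come from some inner component $Y_i$ with, say, $\nu^-(Y_i)=1$: I would locate it by a dimension/degeneration argument on the Bia\l{}ynicki-Birula cells, using that $\sum_{Y} (\dim Y + \nu^+(Y)) $-type relations and the fact that a general orbit closure through the isolated sink $Y_0$ is a rational curve (Lemma \ref{lem:AMFM}) whose further specializations must land on inner components of the wrong normal-bundle type. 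The cleanest route is probably to cite the already-quoted \cite[Lemma~2.6(2), Lemma~3.10]{WORS1} referenced in the statement — indeed the lemma is explicitly attributed there — and reduce to checking the equivalence only on the combinatorics of the weighted fixed-point data, i.e. that $\dim Y_0 = 0$ forces $B^+_1$ (or some $B^\pm_i$) to have codimension one, violating $\nu^\pm\geq 2$.

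For the reverse direction, assume $\dim(Y_0)>0$ and $\dim(Y_r)>0$. Passing to $X^\flat$, the sink $Y_0^\flat = \beta^{-1}(Y_0)$ is a $\P^{\nu^+(Y_0)}$-bundle (equivalently $\P^{n-1-\dim Y_0}$-bundle, using $\dim X = \dim Y_0 + \nu^+(Y_0)$ since $Y_0$ being the sink has $\nu^-(Y_0)=0$) over $Y_0$, and likewise for $Y_r^\flat$; both are genuine divisors, confirming the B-type property. Now for each inner $Y_i$ I must rule out $\nu^+(Y_i)=1$ or $\nu^-(Y_i)=1$. If $\nu^-(Y_i)=1$, then $B^-(Y_i)$ is a divisor in $X$; but a general point of this divisor lies on a $1$-dimensional orbit whose sink is the sink $Y_0$ of the action (possibly after a specialization, tracking the BB-stratification downward), so $B^-(Y_i)\subseteq B^-_{\leq i}$ meets $Y_0$ in a divisor of $Y_0$ — and combined with $B^-(Y_i)$ being a divisor, a comparison of the BB-cell dimensions against $\dim Y_0 > 0$ yields a contradiction. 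I expect the main obstacle to be making this dimension-comparison argument precise: one needs to know exactly how the divisorial cells $B^\pm_i$ stratify $X$ and how their intersections with $Y_0, Y_r$ behave, which is genuinely the content of \cite[Lemma~2.6(2)]{WORS1}. Accordingly, the honest and efficient proof is to invoke that result directly, noting that our $X^\flat$ satisfies its hypotheses by construction (Setup \ref{ass:picZ}, via \cite[Lemma~3.10]{WORS1}), and then translate between "$\nu^\pm(Y)\geq 2$ for all inner $Y$" and "$\dim Y_0,\dim Y_r>0$" using Lemma \ref{lem:bordism} and the identification of the inner fixed locus of $X^\flat$ with $Y_1\cup\dots\cup Y_{r-1}$.
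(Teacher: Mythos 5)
Your proposal is correct and takes essentially the same route as the paper, which offers no proof of this lemma beyond the citation to \cite[Lemma~2.6(2), Lemma~3.10]{WORS1}: the reduction via Lemma~\ref{lem:bordism} to the condition $\nu^{\pm}(Y)\geq 2$ on the inner components $Y_1,\dots,Y_{r-1}$ (which $\beta$ does not touch, and which are exactly the inner components of $X^\flat$) is the intended argument, and the substantive equivalence with $\dim Y_0,\dim Y_r>0$ is indeed delegated to that reference. Two small cautions about your auxiliary sketches: since $\codim B^{\pm}(Y)=\nu^{\mp}(Y)$, it is $B^{+}(Y_i)$, not $B^{-}(Y_i)$, that becomes a divisor when $\nu^{-}(Y_i)=1$; and your bend-and-break argument for the isolated-sink direction is essentially the paper's proof of the subsequent Lemma~\ref{lem:nonbordism}, so that part does go through.
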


In the case in which the extended action on $X^\flat$ is not a bordism, the fixed point components $Y$ with $\nu^{\pm}(Y)=1$ are the ``closest'' to the sink and the source:

\begin{lemma}\label{lem:nonbordism}
Let $(X,L)$ be as in Setup \ref{ass:picZ}, and assume that $Y_0$ (resp. $Y_r$) is a point. Then $Y_1$ is irreducible and $\nu^+(Y_1)=1$ (resp. $Y_{r-1}$ is irreducible and $\nu^-(Y_{r-1})=1$). Moreover, for every other inner fixed point component $Y$ we have $\nu^+(Y) >1$ (resp. $\nu^-(Y) >1$).
\end{lemma}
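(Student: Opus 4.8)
The plan is to reduce to the geometry of the Bia\l ynicki-Birula cell $X^-(Y_0)$ and its closure $B^-(Y_0)$, using that $Y_0$ is a point. By symmetry (composing the action with $t\mapsto t^{-1}$, which swaps sink and source) it suffices to treat the case in which $Y_0$ is a point and prove the statements about $Y_1$. First I would recall from the Bia\l ynicki-Birula decomposition (see \cite[Theorem~4.2]{CARRELL} and Section \ref{ssec:actions}) that $X^-(Y_0)\simeq\cN^-(Y_0)$ as $\C^*$-varieties; since $Y_0$ is a point, $X^-(Y_0)$ is an affine space $\A^n$ (where $n=\dim X$) on which $\C^*$ acts linearly, and by Lemma \ref{lem:equalized} the equalization hypothesis forces this action to be homothety: $t\cdot v = t^{-1}v$ (weights all $-1$ on $\cN^-(Y_0)$, since $Y_0$ is the sink). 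Moreover $B^-(Y_0)=\overline{X^-(Y_0)}$ is a $\C^*$-invariant subvariety of $X$ of dimension $n$ containing the open dense orbit-space $\A^n\setminus\{0\}$ fibered in free $\C^*$-orbits.

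The key step is then to analyze how $B^-(Y_0)$ meets the other fixed point components. The geometric quotient $(X^-(Y_0)\setminus Y_0)/\C^* \simeq (\A^n\setminus\{0\})/\C^* \simeq \P^{n-1}$, and this $\P^{n-1}$ is precisely $V^-(Y_0)=\P(\cN^-(Y_0)^\vee)$. Every point of $\P^{n-1}$ corresponds to a $1$-dimensional orbit emanating from $Y_0$; the source of each such orbit lies in some inner fixed component $Y$ with $\mu_L(Y)>a_0$, so $B^-(Y_0)\setminus X^-(Y_0)$ is contained in $\bigcup_{Y\text{ inner}} X^+(Y)$ (and in the source). The set of orbits in $X^-(Y_0)$ whose source lies in a given $Y$ maps, via the tangent-direction map $\P^{n-1}=V^-(Y_0)$, to a locally closed subset; I would argue that the orbits with source of \emph{smallest} weight $a_1$ form an \emph{open} subset of $\P^{n-1}$ (upper semicontinuity: the limit $\lim_{t\to 0}tx$ jumps up in weight under specialization, so the condition ``source has weight $a_1$'' is open). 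Openness of this locus inside the irreducible $\P^{n-1}$ shows that there is exactly one component $Y$ of $Y_1$ receiving such orbits — call it $Y$ — and moreover that the incidence variety $\{x\in X^-(Y_0)\setminus Y_0 : \lim_{t\to 0}tx\in Y\}$ is open dense in $X^-(Y_0)$, hence irreducible, hence maps onto an open dense (irreducible) subset of $V^+(Y)=\P(\cN^+(Y)^\vee)$; since $\cN^+(Y)$ has rank $\nu^+(Y)$, and since this incidence variety is a $\C^*$-bundle over an open subset of $\P^{n-1}$ on one side and over an open subset of $V^+(Y)$ (which fibers over $Y$ with fibers $\P^{\nu^+(Y)-1}$) on the other, a dimension count gives $(n-1) = \dim Y + (\nu^+(Y)-1)$. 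Comparing with $\dim Y + \nu^+(Y) + \nu^-(Y) \le \dim X = n$ one forces $\nu^+(Y)=1$. To see that $Y_1=Y$ is irreducible, I would note that any other component $Y'$ of $Y_1$ would also receive orbits from $Y_0$ (B-type/connectedness of $B^-(Y_0)$ is not quite enough, so instead use that $B^-(Y_0)$ is irreducible of dimension $n$ and that the locus of orbits with source in $Y'$ is nonempty, contradicting that the $a_1$-locus is already dense). Finally, for any inner fixed component $Y''$ with $\nu^+(Y'')=1$, the same dimension identity $\dim Y'' = n-\nu^+(Y'')=n-1$ combined with $\dim Y'' + \nu^+(Y'')+\nu^-(Y'')\le n$ forces $\nu^-(Y'')=0$, i.e. $Y''$ is the source — impossible for an inner component. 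Hence $\nu^+(Y)>1$ for every inner component other than $Y_1$.

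The main obstacle I anticipate is making the ``openness of the weight-$a_1$ locus'' argument precise and connecting it cleanly to irreducibility of $Y_1$: one must rule out that $Y_1$ has several components each receiving a full-dimensional family of orbits from the point $Y_0$. The cleanest route is probably to invoke directly the structure theory already available — Lemma \ref{lem:bordism} together with Lemma \ref{lem:bordism2} tells us exactly \emph{why} the bordism property fails when $Y_0$ is a point, namely because $\nu^+(Y_1)=1$; so in fact the statement should be extracted as a refinement of \cite[Lemma~2.6(2)]{WORS1} rather than reproved from scratch, and the only genuinely new content is the ``moreover'' clause, which is the short dimension count in the previous paragraph. I would therefore structure the proof as: (i) cite \cite[Lemma~2.6(2), Lemma~3.10]{WORS1} for the irreducibility of $Y_1$ and $\nu^+(Y_1)=1$; (ii) give the dimension argument $\dim Y + \nu^+(Y)+\nu^-(Y)\le \dim X$ together with $\dim Y_0=0$ to pin down the ``moreover'' part; (iii) apply the inversion $t\mapsto t^{-1}$ for the parenthetical statements about $Y_{r-1}$.
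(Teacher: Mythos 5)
Your reduction to the geometry of $X^-(Y_0)\simeq\A^n$ and its orbit space $\P^{n-1}$ starts reasonably, but the key semicontinuity claim is reversed, and this breaks the dimension count. The function $x\mapsto\mu_L\bigl(\lim_{t\to 0}tx\bigr)$ is \emph{lower} semicontinuous: the sets $\Bss_i^+=\{x\,:\,\mu_L(\lim_{t\to 0}tx)\le a_i\}$ are closed (finite unions of cell closures, cf.\ Notation \ref{not:linebundles2}), so the weight of the source of an orbit \emph{drops} under specialization. Hence the locus in $\P^{n-1}$ of orbits whose source lies in $Y_1$ is \emph{closed}, not open: the general orbit through the sink runs all the way to the global source $Y_r$. (Concretely, for the equalized bandwidth-two action on a quadric with isolated $Y_0,Y_2$ and $Y_1$ a quadric of codimension two, this locus is a quadric hypersurface in $\P^{n-1}$.) Without density the identity $(n-1)=\dim Y+(\nu^+(Y)-1)$ fails; and even if it held it would give $\nu^-(Y)=0$ --- recall that $\dim Y+\nu^+(Y)+\nu^-(Y)=\dim X$ exactly, by (\ref{eq:normal+-}) --- i.e.\ that $Y$ is the sink, which is a contradiction rather than the desired conclusion $\nu^+(Y)=1$. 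For the same reason the later assertion that $\nu^+(Y'')=1$ forces $\dim Y''=n-1$ is false.

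Your fallback (i) also leaves a gap: \cite[Lemma~2.6(2)]{WORS1} gives only the \emph{uniqueness} of an irreducible fixed point component with $\nu^+=1$; it does not say that this component lies in $Y_1$, nor that every irreducible component of $Y_1$ has $\nu^+=1$ (which is what yields irreducibility of $Y_1$ once uniqueness is known). That step is the genuine content of the lemma and is exactly where the hypothesis that $Y_0$ is a point enters. The paper's argument runs as follows: if some component $Y\subset Y_1$ had $\nu^+(Y)>1$, then for a fixed $y\in Y$ the orbits with source $y$ form a family of dimension $\nu^+(Y)-1>0$ of rational curves of $L$-degree $a_1$, all of which have sink equal to the single point $Y_0$; so they all pass through the two points $y$ and $Y_0$, and bend-and-break produces a non-integral $\C^*$-invariant $1$-cycle as a degeneration. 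Equalization (via Lemma \ref{lem:AMFM}) excludes a non-reduced irreducible degeneration, while a reducible one would force a fixed point of weight strictly between $a_0$ and $a_1$. This contradiction shows $\nu^+(Y)=1$ for every component of $Y_1$, and only then does \cite[Lemma~2.6(2)]{WORS1} deliver both the irreducibility of $Y_1$ and the ``moreover'' clause; the inversion $t\mapsto t^{-1}$ for the statements about $Y_{r-1}$ is fine as you propose.
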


\begin{proof}
If $Y \subset Y_1$ is an irreducible component such that $\nu^+(Y)>1$, then there exists a positive dimensional family of invariant curves linking a point $y \in Y$ and the point $Y_0$. By bending-and-breaking, this family contains a non integral $1$-cycle $Z$, which is necessarily $\C^*$-invariant. 

By means of Lemma \ref{lem:AMFM}, the equalization of the action implies that $Z$ cannot be non reduced and irreducible. 
On the other hand, if it were reducible the intersection point of two components would be a fixed point with a weight in between $a_0$ and $a_1$, a contradiction.  

The proof is finished recalling \cite[Lemma 2.6 (2)]{WORS1}, which proves that there exists a unique irreducible fixed point component on which $\nu^+$ is equal to $1$. 
\end{proof}

\begin{remark}\label{rem:nonbordism}
Let us denote by $I(X,L)$ the set of indices $\{0,\dots,r\}$ satisfying the hypothesis $(\dagger)$ of Proposition \ref{prop:P1model}.
The above two lemmas imply that if $(X,L)$ is as in Setup \ref{ass:picZ}, then:
$$
I(X,L)=\left\{\begin{array}{ll}
\{0,\dots,r-1\}&\mbox{if }\dim(Y_0),\dim(Y_r)>0\\
\{1,\dots,r-1\}&\mbox{if }\dim(Y_0)=0,\dim(Y_r)>0\\
\{0,\dots,r-2\}&\mbox{if }\dim(Y_0)>0,\dim(Y_r)=0\\
\{1,\dots,r-2\}&\mbox{if }\dim(Y_0)=\dim(Y_r)=0\\
\end{array}\right.
$$
In particular, by Proposition \ref{prop:P1model},  for every $i\in I(X,L)$ we have a small modification $\widehat{X}(i,i+1)$ of $X^\flat$ that is a $\P^1$-bundle over $\GX(i,i+1)$.
\end{remark}

\subsection{The movable cone of $X^\flat$ and its stable base locus decomposition}\label{ssec:linsyst}
 
Recall that, given a $\Q$-divisor $D\in \Pic(Y)\otimes_{\Z}\Q$ on a normal variety $Y$, the {\em stable base locus of $D$} (see \cite[Definition~2.1.20]{L2}) is defined as:
$$
\B(D):=\bigcap_{\substack{m>0\\ mD\in\Pic(Y)}} \Bs(mD).
$$
A $\Q$-divisor $D\in \Pic(Y)\otimes_{\Z}\Q$  on a normal variety $Y$ is called {\em movable} if $\B(D)$ 
has codimension at least two. If $\HH^1(Y,\cO_Y)=0$, then being movable is a numerical property and 
we can define the {\em movable cone of } $Y$, denoted $\Mov(Y)$, as the convex cone in $\NU(Y)$ generated by the classes of movable divisors. Furthermore, we may decompose $\Mov(Y)$ in chambers on whose interior the stable base locus is constant; this is called the {\em stable base locus (SBL) decomposition} of the movable cone $Y$ (see, for instance, \cite[Section~4.1.3]{Hui}). 

Note that, for a variety $X$ in the situation of Setup \ref{ass:picZ}, as in Remark \ref{rem:RC}, we have that $\HH^1(X^{\flat},\cO_{X^{\flat}})=0$, hence it makes sense to talk about $\Mov(X^{\flat})$ and its SBL decomposition; this is the goal of this Section. 

Let us start by describing $\Mov(X^\flat)$. For simplicity we will focus in the case in which the  action is a bordism (i.e. $\dim(Y_i)>0$ for $i=0,r$), an we will discuss later how the same arguments apply in the non-bordism case.

\begin{proposition}\label{prop:movable1}
In the situation of Setup \ref{ass:picZ},  assuming that $\dim(Y_i)>0$, $i=0,r$, then the movable cone of $X^\flat$ is simplicial:
$$\Mov(X^\flat)=\cMov{X^\flat}=\langle L(0,\delta),L(0,0),L(\delta,\delta)\rangle.$$
\end{proposition}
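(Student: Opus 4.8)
The strategy is to prove the two inclusions $\Mov(X^\flat)\subseteq\langle L(0,\delta),L(0,0),L(\delta,\delta)\rangle$ and $\langle L(0,\delta),L(0,0),L(\delta,\delta)\rangle\subseteq\cMov{X^\flat}$ separately, noting that the three classes are linearly independent in $\NU(X^\flat)$ (which has dimension $3$, since $\rho(X)=1$ and $\beta$ blows up two disjoint divisors), so the cone they span is indeed simplicial. First I would observe that by Remark~\ref{rem:RC} movability on $X^\flat$ is a numerical property, so it suffices to argue with the rational classes $L(\tau_-,\tau_+)=\beta^*L-\tau_-Y_0^\flat-(\delta-\tau_+)Y_r^\flat$ for $0\le\tau_-\le\tau_+\le\delta$, and in fact $\NU(X^\flat)$ is spanned by $\beta^*L$, $Y_0^\flat$ and $Y_r^\flat$, so every divisor class is a multiple of some $L(\tau_-,\tau_+)$ up to sign and scaling. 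The key computational input is Corollary~\ref{cor:BSL}: for $\tau_-\le\tau_+$ in $[0,\delta]\cap\Q$ and $m\gg0$ divisible, $\Bs(mL(\tau_-,\tau_+))=\cB^\flat_{i(\tau_-)j(\tau_+)}$, hence $\B(L(\tau_-,\tau_+))=\cB^\flat_{i(\tau_-)j(\tau_+)}$, with $i(\tau_-)=\min\{i\mid a_i\ge\tau_-\}-1$ and $j(\tau_+)=\max\{j\mid a_j\le\tau_+\}+1$.

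For the inclusion $\Mov(X^\flat)\subseteq\langle L(0,\delta),L(0,0),L(\delta,\delta)\rangle$, I would show that if $(\tau_-,\tau_+)$ lies strictly outside the region cut out by $\tau_-=0$, $\tau_+=\delta$ and (the line through $L(0,0)$ and $L(\delta,\delta)$, i.e.) $\tau_-=\tau_+$, then $L(\tau_-,\tau_+)$ is not movable, i.e. $\cB^\flat_{i(\tau_-)j(\tau_+)}$ has a component of codimension one. Concretely: if $\tau_->0$ then $i(\tau_-)\ge0$ so $\cB^+_{i(\tau_-)}\supseteq B^+_0=B^+(Y_0)$; but since the action on $X^\flat$ is of B-type, $Y_0^\flat=Y_0$ is a divisor and $B^+(Y_0)=Y_0^\flat$ (the whole exceptional divisor), giving a codimension-one component of the stable base locus, so $L(\tau_-,\tau_+)$ fails to be movable — and symmetrically if $\tau_+<\delta$ then $B^-_r=Y_r^\flat$ appears. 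If instead $\tau_->\tau_+$ the class is not even pseudoeffective (it has negative weight window), so there is nothing to prove. This shows $\Mov(X^\flat)$ is contained in the closed region $\{\tau_-\ge0,\ \tau_+\le\delta,\ \tau_-\le\tau_+\}$, which is exactly the triangle with vertices $L(0,\delta)$, $L(0,0)$, $L(\delta,\delta)$; here I use that the bordism hypothesis $\dim(Y_i)>0$ guarantees all three are genuine divisors on $X^\flat$ rather than collapsing.

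For the reverse inclusion it suffices, by convexity, to check that each of the three generators lies in $\cMov{X^\flat}$, equivalently that a neighborhood of the open edges/interior is filled by movable classes; concretely I would show $L(\tau_-,\tau_+)$ is movable for all $0<\tau_-<\tau_+<\delta$ (then the three vertices are limits of such classes). Using Corollary~\ref{cor:BSL} again, $\B(L(\tau_-,\tau_+))=\bigcup_{a_i\le\tau_-}B_i^{+,\flat}\cup\bigcup_{a_j\ge\tau_+}B_j^{-,\flat}$; for $0<\tau_-<\tau_+<\delta$ no critical value with index giving a codimension-one BB-cell is included — specifically $B_0^{+}=Y_0^\flat$ is excluded since $\tau_->0$ means $i(\tau_-)\ge0$ but... here I must be careful and instead invoke Proposition~\ref{prop:P1model} together with Remark~\ref{rem:nonbordism}: in the bordism case $I(X,L)=\{0,\dots,r-1\}$, so by Proposition~\ref{prop:P1model} for each $i$ there is a small modification $\widehat X(i,i+1)$ of $X^\flat$, and the nef cones of these small modifications (pulled back) all lie in $\Mov(X^\flat)$; their union, together with the nef cone of $X^\flat$ itself, is checked to cover the open triangle, forcing $\cMov{X^\flat}\supseteq\langle L(0,\delta),L(0,0),L(\delta,\delta)\rangle$. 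I expect the main obstacle to be this last point — verifying that the stable base loci $\cB^\flat_{i(\tau_-)j(\tau_+)}$ genuinely have codimension $\ge2$ for all interior $(\tau_-,\tau_+)$, which hinges on Lemma~\ref{lem:bordism} (i.e. $\nu^\pm(Y)\ge2$ for all inner $Y$, equivalent to the bordism property) ensuring every inner BB-cell closure $B^\pm(Y)$ has codimension $\ge2$ — and then assembling these local statements into the global cone description. The non-bordism cases, as the statement indicates, are handled by the analogous argument after deleting the degenerate vertex/vertices according to Remark~\ref{rem:nonbordism}.
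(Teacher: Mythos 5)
Your second inclusion (classes in the triangle are movable) is essentially the paper's argument: $L(0,\delta)=\beta^*L$ is nef, and Corollary \ref{cor:BSL} identifies the stable base loci of the remaining classes with sets $\Bss^\flat_{i(\tau_-)j(\tau_+)}$ whose components have codimension $\nu^{\pm}(Y)\geq 2$ for inner $Y$ by Lemmas \ref{lem:bordism2} and \ref{lem:bordism} (this is where $\dim Y_0,\dim Y_r>0$ is used). Two remarks, though: the paper applies this directly to the vertices $L(0,0)$ and $L(\delta,\delta)$, which by convexity suffices and, unlike your ``vertices as limits of movable interior classes'' variant, actually proves $\Mov(X^\flat)=\cMov{X^\flat}$ rather than only the equality of closures; and your fallback via Proposition \ref{prop:P1model} and the nef cones of the $\widehat{X}(i,i+1)$ risks circularity, since the identification of $\Nef(\widehat{X}(i,i+1))$ with a chamber of $\Mov(X^\flat)$ is only established later (Corollary \ref{cor:GITSQM1}), downstream of this very proposition.

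The genuine gap is in the inclusion $\cMov{X^\flat}\subseteq\langle L(0,\delta),L(0,0),L(\delta,\delta)\rangle$. Corollary \ref{cor:BSL} computes base loci only for $0\leq\tau_-\leq\tau_+\leq\delta$, i.e.\ precisely for classes \emph{inside} the triangle, so it cannot certify non-movability outside it. Your concrete step --- ``if $\tau_->0$ then $B^+_0=B^+(Y_0)=Y_0^\flat$ is a codimension-one component of the stable base locus'' --- fails: taken at face value it would make every class with $\tau_->0$ non-movable, contradicting the movability of $L(\delta,\delta)$; moreover $\Bss^\flat_{ij}$ is by definition the closure of $\beta^{-1}(\Bss_{ij}\setminus(Y_0\cup Y_r))$, so it never contains the exceptional divisors, and $B^+(Y_0)=Y_0\subset X$ is not a divisor by Remark \ref{rem:extremaldiv}. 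The missing idea, which is what the paper uses, is duality with curves: the general orbit closure $C_{\gen}$ (whose deformations cover $X^\flat$) and the lines $C_0,C_r$ in the fibers of $Y_0^\flat\to Y_0$, $Y_r^\flat\to Y_r$ (whose deformations cover the divisors $Y_i^\flat$) must meet every movable class nonnegatively; with $\beta^*L\cdot C_{\gen}=\delta$, $\beta^*L\cdot C_i=0$, $Y_i^\flat\cdot C_{\gen}=1$ and $Y_i^\flat\cdot C_j=-\delta_{ij}$, the three inequalities $L(\tau_-,\tau_+)\cdot C_0=\tau_-\geq 0$, $L(\tau_-,\tau_+)\cdot C_r=\delta-\tau_+\geq 0$ and $L(\tau_-,\tau_+)\cdot C_{\gen}=\tau_+-\tau_-\geq 0$ cut out exactly the stated simplicial cone. (This also supplies the honest justification for your aside that $\tau_->\tau_+$ gives a non-pseudoeffective class.)
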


\begin{proof}
The divisor $L(0,\delta)=\beta^*L$ is nef, hence movable. On the other hand, by Corollary \ref{cor:BSL}, $\Bs(mL(0,0))=\Bss^\flat_{-1,1}$, for $m\gg 0$. Each irreducible component of $\Bss^\flat_{-1,1}$ is either the source or has codimension in $X$ equal to $\nu^-(Y)$ for an inner fixed point component $Y$. By Remark \ref{rem:extremaldiv}, Lemma \ref{lem:bordism2}, and Lemma \ref{lem:bordism}, none of these components is a divisor and $L(0,0)$ is movable. A similar proof shows that $L(\delta,\delta)$ is movable, so we get an inclusion
$$\langle L(0,\delta),L(0,0),L(\delta,\delta)\rangle\subseteq\Mov(X^\flat)\subseteq\cMov{X^\flat}.$$ 
On the other hand, we consider the numerical classes of the following curves:
\begin{itemize}
\item the closure $C_{\gen}$ of the general $\C^*$-orbit in $X^\flat$;
\item a line $C_i$ in a fiber of the projective bundle $\beta:Y_i^\flat\to Y_i$, $i=0,r$. 
\end{itemize}

The loci of the deformations of these curves have codimension $0$ and $1$, respectively, hence their intersection number with every class in $\cMov{X^\flat}$ must be nonnegative. Since, by Lemma \ref{lem:AMFM}, we have $\beta^*L\cdot C_{\gen}=\delta$, and clearly: 
$$\beta^*L\cdot C_i=0,\,\,\,Y_i^\flat \cdot C_{\gen}=1,\,\,\,Y_i^\flat\cdot C_j=-\delta_{ij},$$
the above nonnegativity conditions can easily be rewritten as
$$\cMov{X^\flat}\subseteq \langle L(0,\delta),L(0,0),L(\delta,\delta)\rangle,$$
and the statement follows.  
\end{proof}

\begin{remark}\label{rem:movable2}
The above proof shows that the dual of $\Mov(X^\flat)$ in the case in which $\dim(Y_0),\dim(Y_r)>0$ is the cone generated by the classes $[C_{\gen}],[C_0],[C_r]$. If this condition is not fulfilled, we need to add some extra classes of curves (whose deformation loci have codimension $1$) to the set of generators of $\Mov(X^\flat)^\vee$. More precisely, denoting by $C_{1,r}$, $C_{0,r-1}$ two irreducible $\C^*$-invariant curve linking $Y_1$ with $Y_r$, and $Y_0$ with $Y_{r-1}$, respectively, an  argument similar to the one in the previous proof  gives:
\begin{itemize}
\item If $\dim(Y_0)=0$, $\dim(Y_r)>0$, then %\begin{multline*}
$$\Mov(X^\flat)=\langle [C_{\gen}],[C_0],[C_r], [C_{1,r}]\rangle^\vee=\langle L(0,a_1),L(a_1,a_1),L(\delta,\delta),L(0,\delta)\rangle.$$%\end{multline*}
\item If $\dim(Y_0)>0$, $\dim(Y_r)=0$, then 
\begin{multline*}\Mov(X^\flat)=\langle [C_{\gen}],[C_0],[C_r], [C_{0,r-1}]\rangle^\vee=\\=\langle L(0,0),L(a_{r-1},a_{r-1}),L(a_{r-1},\delta),L(0,\delta)\rangle.\end{multline*}
\item If $\dim(Y_0)=0$, $\dim(Y_r)=0$, then \begin{multline*}\Mov(X^\flat)=\langle [C_{\gen}],[C_0],[C_r], [C_{1,r}], [C_{0,r-1}]\rangle^\vee=\\=\langle L(0,a_1),L(a_1,a_1),L(a_{r-1},a_{r-1}),L(a_{r-1},\delta),L(0,\delta)\rangle.\end{multline*}
\end{itemize}
The positions of the generators of (an affine slice of) the movable cone in each case has been represented in Figure \ref{fig:movbordism}.
\end{remark}

The SBL decomposition of $\Mov(X^{\flat})$ can be now read out of the following statement, that is a consequence of Corollary \ref{cor:BSL}, 

\begin{corollary}\label{cor:SBL}
For every $\tau_-,\tau_+\in[0,\delta]\cap \Q$, $\tau_-\leq\tau_+$ we have:
$$\B(L(\tau_-,\tau_+))=\Bss^\flat_{i(\tau_-)j(\tau_+)}.
$$
\end{corollary}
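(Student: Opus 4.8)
The plan is to deduce the statement about stable base loci from the statement about base loci of individual linear systems in Corollary \ref{cor:BSL}. Recall that by Corollary \ref{cor:BSL}, for $\tau_-\le\tau_+$ rational and $m\gg 0$ with $m\tau_\pm\in\Z$ we have $\Bs(mL(\tau_-,\tau_+))=\Bss^\flat_{i(\tau_-)j(\tau_+)}$. The first observation is that this already pins down the intersection over all such large and divisible $m$: the set $\Bss^\flat_{i(\tau_-)j(\tau_+)}$ is \emph{independent} of $m$ once $m$ is large and divisible enough, so the intersection defining $\B(L(\tau_-,\tau_+))$ stabilizes. Hence I would first argue that
$$
\B(L(\tau_-,\tau_+))=\bigcap_{\substack{k>0\\ kL(\tau_-,\tau_+)\in\Pic(X^\flat)}}\Bs\big(kL(\tau_-,\tau_+)\big)\subseteq \Bs\big(mL(\tau_-,\tau_+)\big)=\Bss^\flat_{i(\tau_-)j(\tau_+)}
$$
for any one admissible $m\gg 0$, giving the inclusion ``$\subseteq$''.

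For the reverse inclusion I would show that $\Bss^\flat_{i(\tau_-)j(\tau_+)}$ is contained in $\Bs(kL(\tau_-,\tau_+))$ for \emph{every} positive integer $k$ with $kL(\tau_-,\tau_+)\in\Pic(X^\flat)$, not just the large divisible ones. The key point is that the base locus can only grow when passing to a divisor of the form $kL(\tau_-,\tau_+)$ with $k$ not necessarily large: more precisely, if $D_1,D_2$ are effective divisors then $\Bs(D_1+D_2)\supseteq\Bs(D_1)\cap\Bs(D_2)$, and more usefully $\Bs(kD)\supseteq\Bs(mkD)$ for all $m\ge 1$, so for a fixed admissible $k$ we may choose $m$ so that $mk$ is large and divisible in the sense of Corollary \ref{cor:BSL}, obtaining $\Bs(kL(\tau_-,\tau_+))\supseteq\Bs(mkL(\tau_-,\tau_+))=\Bss^\flat_{i(\tau_-)j(\tau_+)}$. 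Taking the intersection over all admissible $k$ yields $\B(L(\tau_-,\tau_+))\supseteq\Bss^\flat_{i(\tau_-)j(\tau_+)}$, and combined with the previous paragraph this proves equality.

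The main subtlety — the step I expect to require the most care — is the passage from $\Q$-divisors to honest Cartier divisors and the ``$m\gg 0$'' hypothesis of Corollary \ref{cor:BSL}: one must make sure that the multiples $m$ appearing there can always be taken to be multiples of any prescribed admissible $k$, which is automatic since ``large and divisible'' is cofinal among the positive integers, but it should be spelled out. One should also note that $L(\tau_-,\tau_+)=\beta^*L-\tau_-Y_0^\flat-(\delta-\tau_+)Y_r^\flat$ is a genuine $\Q$-divisor class in $\NU(X^\flat)$, and that since $\HH^1(X^\flat,\cO_{X^\flat})=0$ (Remark \ref{rem:RC}) linear and numerical equivalence agree, so $\B(L(\tau_-,\tau_+))$ is well defined as a numerical invariant and the whole argument is legitimate. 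Everything else is a formal manipulation of base loci, so no further obstacle is anticipated.
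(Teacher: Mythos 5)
Your argument is correct and is exactly the route the paper intends: the paper gives no separate proof, simply asserting the corollary as a consequence of Corollary \ref{cor:BSL}, and your two inclusions (using $\Bs(kD)\supseteq\Bs(mkD)$ together with the cofinality of large divisible multiples among the admissible $k$) are the standard way to fill in that deduction. No gaps.
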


In particular, the chambers of the SBL decomposition of $\Mov(X^{\flat})$ can be described 
as follows. For every set of indices $(i,j)$, $0\leq i\leq j\leq r$ we set:
$$
N_{i,j}:=\left\{mL(\tau_-,\tau_+)\in \NU(X^{\flat})
\left|\begin{array}{l} 0<\tau_-<\tau_+<\delta,\,\, m\geq 0\\ \tau_-\in(a_i,a_{i+1}),\,\,\tau_+\in(a_{j-1},a_j)\end{array} \right.\right\}.
$$
The previous statement tells us that $N_{i,j}$ is an SBL-chamber whenever it is contained in $\Mov(X^{\flat})$, and this is the case for every $(i,j)$, $i< j$, except in the following two cases (see Remark \ref{rem:movable2}):
\begin{itemize}
\item $Y_0$ is a point and $(i,j)=(0,1)$;
\item $Y_r$ is a point and $(i,j)=(r-1,r)$;
\end{itemize}
We have represented (an affine slice of) the SBL decomposition of the movable cone of $X^\flat$ in Figure \ref{fig:movbordism}.

\begin{center}
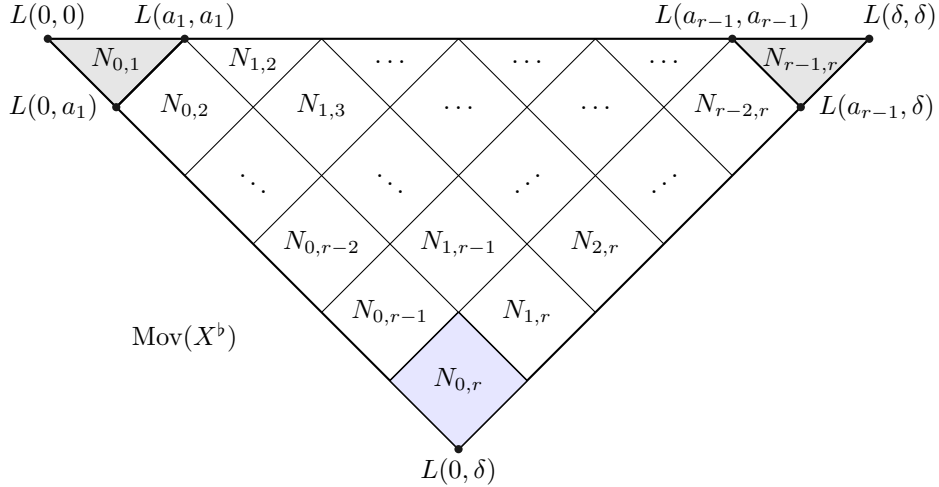
\begin{figure}[h!]
\begin{tikzpicture}[scale=0.91]
\draw[fill=gray!20]    (-6,0) -- (-4,0) -- (-5,-1) -- (-6,0);
\draw[fill=gray!20]    (6,0) -- (4,0) -- (5,-1) -- (6,0);
\draw [thick] (-6,0) -- (6,0) -- (0,-6) -- (-6,0);
\draw[thick] (-5,-1) -- (-4,0); 
\draw[thick](-5,-1) -- (-4,0);
\draw[thick] (-1,-5) -- (0,-4); 
 \draw[thick] (1,-5) -- (0,-4); 
\draw(-4,0) -- (1,-5);
\draw (-4,-2) -- (-2,0) -- (2,-4);
\draw (-3,-3) -- (0,0) -- (3,-3);
\draw (-2,-4) -- (2,0) -- (4,-2);
\draw (-1,-5) -- (4,0); 
\draw[thick] (4,0)--(5,-1);
\draw [fill=blue!10] (0,-6)--(1,-5)--(0,-4)--(-1,-5)--(0,-6);
\node [below] at (-4,-4) {$\Mov(X^\flat)$}; \node at (0,-5) {$N_{0,r}$};
\node at (-1,-4) {$N_{0,r-1}$};\node at (1,-4) {$N_{1,r}$};
\node at (-2,-3) {$N_{0,r-2}$};\node at (0,-3) {$N_{1,r-1}$};\node at (2,-3) {$N_{2,r}$};
\node at (-3,-2) {$\ddots$};\node at (-1,-2) {$\ddots$};\node at (1,-2) {$\iddots$};\node at (3,-2) {$\iddots$};
\node at (-4,-1) {$N_{0,2}$};\node at (-2,-1) {$N_{1,3}$};\node at (0,-1) {$\ldots$};\node at (2,-1) {$\ldots$};\node at (4,-1) {$N_{r-2,r}$};
\node [below] at (-5,0.00) {$N_{0,1}$};\node [below] at (-3,0.00) {$N_{1,2}$};\node [below] at (-1,-0.18) {$\ldots$};\node [below] at (1,-0.18) {$\ldots$};\node [below] at (3,-0.18) {$\ldots$} ;\node [below] at (5,0.00) {$N_{r-1,r}$};
\fill[black!90!white] (-6,0) circle (0.6mm); \node[anchor=south]  at (-6,0) {$L(0,0)$\qquad};
\fill[black!90!white] (-5,-1) circle (0.6mm); \node[anchor=east] at (-5,-1) {$L(0,a_1)\,\,$};
\fill[black!90!white] (-4,0) circle (0.6mm); \node[anchor=south] at (-4,0) {$L(a_1,a_1)$};
\fill[black!90!white] (4,0) circle (0.6mm); \node[anchor=south] at (4,0) {$L(a_{r-1},a_{r-1})$};
\fill[black!90!white] (5,-1) circle (0.6mm); \node[anchor=west] at (5,-1) {$\,\,L(a_{r-1},\delta)$};
\fill[black!90!white] (6,0) circle (0.6mm); \node[anchor=south] at (6,0) {$\qquad\,\,L(\delta,\delta)$};
\fill[black!90!white] (0,-6) circle (0.6mm); \node[anchor=north] at (0,-6) {$L(0,\delta)$};
\end{tikzpicture}
\caption{The movable cone of $X^\flat$ and its SBL decomposition.\label{fig:movbordism}} \end{figure}
\end{center}

\subsection{The Mori chamber decomposition of $X^\flat$}\label{ssec:SQM}

We will now describe the {\em Mori chamber decomposition} of the movable cone of $X^\flat$, which in general is only known to be a refinement of the stable base locus decomposition of the movable cone  (see for instance \cite[Remark~2.5]{LMR}). In our case we will show:

\begin{theorem}\label{thm:Mori}
In the situation of Setup \ref{ass:picZ}, the Mori chamber decomposition of $X^\flat$ equals its SBL decomposition, that is, the Mori chambers of $X^\flat$ are the sets $N_{i,j}\subset\Mov(X^\flat)$.
\end{theorem}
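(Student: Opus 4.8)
The plan is to show that each putative chamber $N_{i,j}$ is in fact a Mori chamber by exhibiting, for a class in its interior, a small $\Q$-factorial modification of $X^\flat$ on which the corresponding divisor becomes ample. Since the Mori chamber decomposition is always a refinement of the SBL decomposition (which we computed in Corollary \ref{cor:SBL} and the surrounding discussion), it suffices to produce, for each admissible pair $(i,j)$ with $0\le i\le j\le r$, a birational model $X(i,j)$, isomorphic to $X^\flat$ in codimension one, together with a nef-and-big (in fact ample) divisor whose class lies in $N_{i,j}$, and to check that the $N_{i,j}$ that are not contained in $\Mov(X^\flat)$ genuinely do not arise. The models $X(i,j)$ are built recursively by iterating Theorem \ref{thm:flip} (and its mirror image, Remark \ref{rem:flip}): starting from $X^\flat$, whose extended $\C^*$-action is of B-type (Setup \ref{ass:picZ}), one applies $\psi^-$ to cut off the bottom critical value and $\psi^+$ to cut off the top one, each time obtaining a smooth projective variety with a B-type equalized action of smaller criticality, and each such step being a $\C^*$-equivariant small modification. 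After $i$ applications of $\psi^-$ and $r-j$ applications of $\psi^+$ one reaches a variety $X(i,j)$ whose extremal fixed point components are (isomorphic to) $\GX(i,i+1)$ and $\GX(j,j+1)$, by Corollary \ref{cor:SX-}.

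The key steps, in order, are: (i) verify that the hypotheses of Theorem \ref{thm:flip} are met at each stage — namely that $\nu^-(Y)>1$ for every fixed component $Y$ of the current smallest inner critical value; here Lemma \ref{lem:bordism} and Lemma \ref{lem:nonbordism} guarantee that the only components with $\nu^\pm=1$ are adjacent to a sink/source that is a point, which is exactly why the pairs $(0,1)$ (when $Y_0$ is a point) and $(r-1,r)$ (when $Y_r$ is a point) must be excluded, matching Remark \ref{rem:movable2}; (ii) track the divisor: by the ampleness statement in Theorem \ref{thm:flip}, on $X(i,j)$ the pushforward of $L(\tau_-,\tau_+)$ for $\tau_-\in(a_i,a_{i+1})$, $\tau_+\in(a_{j-1},a_j)$ is ample, so its class in $\NU(X^\flat)\cong\NU(X(i,j))$ lies in the interior of $N_{i,j}$ and is Mori; (iii) conclude that the $N_{i,j}$ sweep out all of $\Mov(X^\flat)$ — by Corollary \ref{cor:SBL} every class in $\Mov(X^\flat)$ has the form $L(\tau_-,\tau_+)$ with $\tau_\pm$ in an open interval between consecutive critical values (the ones landing on walls being the SBL-chamber boundaries), so the $N_{i,j}$ exhaust the cone and each is Mori; hence the Mori decomposition cannot be strictly finer than the SBL one, giving equality.

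The main obstacle I anticipate is step (i) combined with the bookkeeping needed to keep the recursion consistent: Theorem \ref{thm:flip} lowers criticality and rescales critical values, so one must check that after each flip the new variety still satisfies Setup \ref{ass:picZ}-type hypotheses (smoothness is clear, but one needs the B-type/equalized properties, which Corollary \ref{cor:SX-} provides, and the $\nu^\pm>1$ condition for the \emph{next} flip, which requires re-invoking Lemma \ref{lem:bordism} on the \emph{new} action or tracking via the ``$\psi^-$ preserves $\nu^\pm$'' clause of Theorem \ref{thm:flip}). A secondary subtlety is that $\psi^-$ and $\psi^+$ must be shown to commute in the appropriate sense so that $X(i,j)$ is well-defined independently of the order in which one peels off bottom and top critical values; this follows because the indeterminacy locus of $\psi^-$ is $B^+_1$ and that of $\psi^+$ is $B^-_{r-1}$, which are disjoint for $r\ge 2$, so the two constructions are performed on disjoint open neighborhoods and their composition is order-independent. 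Once these points are in place, the identification of each $N_{i,j}$ as the ample cone pulled back from $X(i,j)$ is immediate, and the theorem follows.
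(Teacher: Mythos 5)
Your plan coincides with the paper's proof: Theorem~\ref{thm:Mori} is established there via Proposition~\ref{prop:precise}, which constructs the models $X(i,j)$ by exactly this recursion on Theorem~\ref{thm:flip} (checking its hypotheses through Lemmas~\ref{lem:bordism2} and~\ref{lem:nonbordism}, tracking ampleness of the pushed-forward $L(\tau_-,\tau_+)$, and concluding $\Nef(X(i,j))=\overline{N_{i,j}}$ from the fact that the Mori decomposition refines the SBL one). The only slip is your claim that $B^+_1$ and $B^-_{r-1}$ are disjoint for $r\geq 2$ (they both contain $Y_1$ when $r=2$), but this is harmless since for $r=2$ one never composes $\psi^-$ with $\psi^+$; in any case the order-independence of $X(i,j)$ is more cleanly obtained from the uniqueness of a small $\Q$-factorial modification with prescribed nef chamber.
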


Note that if $\dim(Y_0)=0$ (resp. $\dim(Y_r)=0$) then $\Mov(X^\flat)$ does not contain the chamber $N_{0,1}$ (resp. $N_{r-1,r}$); in the case in which $X^\flat$ is a bordism we have a chamber for every pair of indices $(i,j)\in\{0,\dots,r\}^2$, $i<j$. Let us set
$$
\cM:=\left\{(i,j)|\,\, N_{i,j}\subset \Mov(X^\flat)\right\}.
$$
In order to prove Theorem  \ref{thm:Mori} we will use a recursive argument that allows us to prove the following more detailed version of it:

\begin{proposition}\label{prop:precise}
In the situation of Setup \ref{ass:picZ}, for every $(i,j)\in \cM$ there exists a smooth projective variety $X(i,j)$,  with a B-type equalized $\C^*$-action of criticality $j-i$, and a $\C^*$-equivariant small modification $\varphi_{i,j}:X^\flat\dashrightarrow X(i,j)$ such that:
\begin{itemize}
\item[(i)] the indeterminacy locus of $\varphi_{i,j}$ does not meet any inner fixed point component of $L$-weight $a_k$, for $i<k<j$;
\item[(ii)] $\varphi_{i,j}$ maps isomorphically the inner fixed point components of $X^\flat$ of weights $a_k$, $i<k<j$ to the inner fixed point components of $X(i,j)$, and preserves the values of $\nu^\pm$ on these components;
\item[(iii)] $\Nef(X(i,j))=\overline{N_{i,j}}$.
\end{itemize}
\end{proposition}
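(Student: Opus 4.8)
The argument proceeds by induction on the criticality $r$ of the $\C^*$-action on $X$, using Theorem \ref{thm:flip} (and its mirror version, Remark \ref{rem:flip}) as the inductive step. The base case is $r=1$: here there are no inner fixed point components, $\cM=\{(0,1)\}$, and $X(0,1)=X^\flat$ itself works, with $\varphi_{0,1}=\id$; properties (i)--(ii) are vacuous and (iii) is the computation $\Nef(X^\flat)=\overline{N_{0,1}}$, which follows from Corollary \ref{cor:SBL} together with Proposition \ref{prop:movable1} (in the bordism case) or Remark \ref{rem:movable2} (when a sink or source is a point), since $N_{0,1}$ is then the face of the movable cone on which the stable base locus is empty and which contains the nef class $L(0,\delta)=\beta^*L$.

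For the inductive step, suppose the statement is known for all admissible varieties of criticality less than $r$. Fix $(i,j)\in\cM$. If $(i,j)=(0,r)$, then $X(0,r)=X^\flat$ works (the chamber $N_{0,r}$ contains $\beta^*L$ and $\B(L(\tau_-,\tau_+))$ is empty for $\tau_\pm$ in the appropriate open intervals near $0$ and $\delta$, by Corollary \ref{cor:SBL} and Remark \ref{rem:nonbordism}, since those base loci only involve $B^+_k$ with $a_k\le\tau_-$, $B^-_k$ with $a_k\ge\tau_+$, of which there are none). Otherwise $i\geq 1$ or $j\leq r-1$. Assume first $i\geq 1$; the hypothesis $(\dagger)$ built into membership of $(i,j)$ in $\cM$, combined with Lemma \ref{lem:nonbordism}, guarantees $\nu^-(Y)>1$ for every fixed component $Y$ of weight $a_1$ on $X^\flat$, so Theorem \ref{thm:flip} applies: it produces a smooth projective variety $\SX^-$ with a B-type equalized $\C^*$-action of criticality $r-1$, together with a $\C^*$-equivariant small modification $\psi^-:X^\flat\dashrightarrow\SX^-$ whose indeterminacy locus is $B^+_1$, which sends the inner fixed components of weight $a_k$, $k\geq 2$, isomorphically to those of $\SX^-$, preserving $\nu^\pm$. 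Crucially, $\SX^-$ is again of the form covered by Setup \ref{ass:picZ} after blowing up \emph{its} sink and source --- but one must check that $\SX^-$ already \emph{is} the blowup of an appropriate Picard-number-one variety along its extremal components, or more precisely, that $\SX^-$ plays the role of "$X^\flat$" for a variety of strictly smaller criticality, so the inductive hypothesis supplies $(\SX^-)(i-1,j-1)$ together with $\varphi': \SX^-\dashrightarrow (\SX^-)(i-1,j-1)$ having properties (i)--(iii) with respect to the shifted critical values $0,a_2-\tau,\dots,a_r-\tau$. One then sets $X(i,j):=(\SX^-)(i-1,j-1)$ and $\varphi_{i,j}:=\varphi'\circ\psi^-$. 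Symmetrically, if $j\leq r-1$ one uses $\psi^+$ from Remark \ref{rem:flip} instead; and if both $i\geq 1$ and $j\leq r-1$ either choice works and one should check they agree (both modify $X^\flat$ only in codimension $\geq 2$ and restrict to the same map on a common open set, hence coincide).

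It then remains to verify properties (i)--(iii) for $X(i,j)$ so constructed. Property (i): the indeterminacy locus of $\varphi_{i,j}$ is contained in $B^+_1$ (from $\psi^-$) union the preimage of the indeterminacy locus of $\varphi'$; since $\psi^-$ is an isomorphism near every inner fixed component of weight $a_k$ for $k\ge 2$ (in particular for $i<k<j$, recalling $i\ge 1$ so $k\ge 2$), and by inductive hypothesis $\varphi'$ avoids the inner fixed components of the shifted weights $a_k-\tau$ for $i-1<k-1<j-1$, the composite avoids all inner fixed components of weight $a_k$, $i<k<j$. Property (ii) follows by composing the two isomorphisms-on-fixed-components and noting $\nu^\pm$ is preserved by each (Theorem \ref{thm:flip} and the inductive hypothesis). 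Property (iii) is the main point: one must identify $\overline{N_{i,j}}$, a chamber in $\Mov(X^\flat)$, with $\Nef(X(i,j))$. The strategy is to transport: $\varphi_{i,j}^*$ (more precisely $(\varphi_{i,j}^{-1})_*$, which is an isomorphism on N\'eron--Severi groups since $\varphi_{i,j}$ is small) identifies $\Mov(X^\flat)\simeq\Mov(X(i,j))$ and, because $\psi^-$ and $\varphi'$ each push forward $L(\tau_-,\tau_+)$-type divisors to the analogous divisors downstairs (Theorem \ref{thm:flip} states $\psi^-_*(D-\tau Y_0)$ is ample on $\SX^-$ for $\tau\in(a_1,a_2)$, and by linearity $\psi^-_*$ carries the whole cone $\langle L(\tau_-,\tau_+)\rangle$ to the corresponding cone on $\SX^-$), the chamber $N_{i,j}$ is carried precisely onto the chamber "$N_{i-1,j-1}$" of $(\SX^-)$, which by inductive hypothesis equals the interior of $\Nef(X(i,j))$. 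Matching the combinatorics of the index shift --- that crossing the wall from $N_{i,j}$ to $N_{i+1,j}$ corresponds exactly to the flip $\psi^-$ and the relabeling $\tau\mapsto\tau$, $a_k\mapsto a_k-\tau$ --- is bookkeeping but needs care.

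\textbf{Main obstacle.} The hard part is not any single estimate but the \emph{recursive setup itself}: verifying that $\SX^-$ (resp. $\SX^+$), as produced by Theorem \ref{thm:flip}, genuinely falls under the inductive hypothesis --- i.e. that it is the blowup along sink and source of a Picard-number-one smooth projective variety satisfying Setup \ref{ass:picZ} with the excluded $\P^n$-case not arising --- and that the reindexing of critical values and of the chambers $N_{i,j}$ is compatible with the flip at every stage, so that the chamber $\overline{N_{i,j}}$ upstairs matches $\Nef(X(i,j))$ downstairs on the nose rather than up to some unaccounted wall. Establishing that the nef cones of the $X(i,j)$ glue along walls to fill the entire $\Mov(X^\flat)$ --- equivalently, that the Mori chamber decomposition is exactly $\{N_{i,j}\}$ and not a proper refinement --- is then the consequence that proves Theorem \ref{thm:Mori}; the only way a finer decomposition could occur is if some $N_{i,j}$ itself subdivided, which the explicit construction of $X(i,j)$ with $\Nef(X(i,j))=\overline{N_{i,j}}$ rules out.
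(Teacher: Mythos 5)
Your overall strategy (iterate Theorem \ref{thm:flip} and its mirror to cross walls one at a time) is the right one, but the way you set up the recursion has a genuine gap, which you yourself flag as the ``main obstacle'' and then do not resolve. You induct on the criticality of $X$ and, after producing $\SX^-$ from $X^\flat$, you want to apply the inductive hypothesis to $\SX^-$; for that you need $\SX^-$ to again be of the form ``blowup along sink and source of a smooth Picard-number-one variety satisfying Setup \ref{ass:picZ}''. This is not available: $\SX^-$ is a small modification of $X^\flat$, so it has Picard number three, its sink is the quotient $\GX(1,2)$, and there is no reason (and no claim anywhere in the paper) that the two boundary divisors of $\SX^-$ can be blown down inside a smooth Picard-number-one variety so as to exhibit $\SX^-$ as an ``$X'^{\flat}$''. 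Since Proposition \ref{prop:precise} is stated specifically for the chambers $N_{i,j}$ of $\Mov(X^\flat)$, an inductive hypothesis phrased for a different ambient variety cannot be invoked without reproving the description of its movable cone and SBL decomposition. As written, the induction does not close.

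The paper's proof avoids this entirely by inducting on the pairs $(i,j)$ rather than on the criticality of $X$: it starts from $X(0,r)=X^\flat$ and passes from $(i,j)$ to $(i+1,j)$ (or $(i,j-1)$) by applying Theorem \ref{thm:flip} \emph{directly to the already constructed} $X(i,j)$, polarized by $D=(\varphi_{i,j})_*L(\tau_-,\tau_+)$. Theorem \ref{thm:flip} only requires a smooth projective variety with a B-type equalized action and the condition $\nu^-(Y)>1$ at the first inner critical value --- no Picard-number-one or blowup structure --- and that condition is verified on $X(i,j)$ via property (ii) (which transports $\nu^\pm$ from $X^\flat$) together with Lemmas \ref{lem:bordism2} and \ref{lem:nonbordism}, under the case analysis of when $(i+1,j)\in\cM$. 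This is also where your appeal to Lemma \ref{lem:nonbordism} is imprecise: when $Y_0$ is a point that lemma gives $\nu^+(Y_1)=1$, which is irrelevant to the hypothesis $\nu^-(Y)>1$ of Theorem \ref{thm:flip}; the component where $\nu^-$ can equal $1$ is $Y_{r-1}$ when $Y_r$ is a point, which is exactly why the paper's case (c) excludes $\dim(Y_r)=0$. Finally, property (iii) in the paper does not rely on transporting whole cones by $\psi^-_*$ and matching an inductively known nef cone downstairs; it only uses the single ampleness statement of Theorem \ref{thm:flip} to show $N_{i+1,j}\subseteq\Nef(X(i+1,j))^{\circ}$, and then the reverse inclusion comes for free because the Mori chamber decomposition refines the SBL decomposition. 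Reorganizing your argument along these lines would repair it.
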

 
 \begin{proof}
We start by setting $X(0,r):=X^{\flat}$, $\varphi_{0,r}:=\id$.

By Corollary \ref{cor:SBL}, any $\Q$-divisor in $\overline{N_{0,r}}$ is semiample, hence nef, so we may conclude that $\overline{N_{0,r}}\subset \Nef(X(0,r))$. On the other hand, by the same Corollary, movable $\Q$-divisors outside of $\overline{N_{0,r}}$ are not semiample, hence not ample, so the equality follows.

Take $(i,j) \in \cM$ and assume that the statement holds for such a pair. We will show that if $(i+1,j)\in \cM$, then the statement is also true for $(i+1,j)$; an analogous argument yields that the result holds for $(i,j-1)$ when $(i,j-1)\in \cM$. 

First of all, we note that assuming that $(i+1,j)\in \cM$ implies that either 
\begin{itemize}
\item[(a)] $j-i\geq 3$, or
\item[(b)] $j-i=2$ and $i<r-2$, or
\item[(c)]   $i=r-2$, $j=r$,  and $\dim(Y_r)>0$.
\end{itemize}
This follows from the description of $\Mov(X^\flat)$ given in Proposition \ref{prop:movable1} and Remark \ref{rem:movable2}, and of its SBL decomposition given at the beginning of this Section. 

Now we use  property (ii) to identify the inner fixed point components of $X(i,j)$ with some inner fixed point components of $X^\flat$; we also choose an ample $\Q$-divisor $D:=(\varphi_{i,j})_* L(\tau_-,\tau_+)\in N_{i,j}$. By Lemma \ref{lem:AMFM}, the $D$-weights of the inner fixed point components of $X(i,j)$ are $a_{i+1}-\tau_-,\dots,a_{j-1}-\tau_-$. 

Since $\nu^{\pm}$ is preserved by $\varphi_{i,j}$ on these inner fixed point components, by Lemmas \ref{lem:bordism2}, \ref{lem:nonbordism}, it follows that, in each of the situations (a),(b),(c) described above, the pair $(X(i,j),D)$ satisfies the hypotheses of Theorem \ref{thm:flip}. We then consider the smooth $\C^*$-equivariant small modification $\psi^-:X(i,j)\dashrightarrow \cS X(i,j)^-$ provided by that statement and set:
$$
X(i+1,j):=\cS X(i,j)^-,\quad \varphi_{i+1,j}:=\psi^-\circ \varphi_{i,j}.
$$
Let us now show that $X(i+1,j)$ and $\varphi_{i+1,j}$ satisfy the requirements of the Proposition. Note that the induced $\C^*$-action on $X(i+1,j)$ is equalized of B-type by Theorem 3.1, so we are left to prove the properties (i), (ii), (iii).

By assumption, the indeterminacy locus of $\varphi_{i,j}$ does not meet any fixed point component of $L$-weight  $a_k$, for $i<k<j$. On the other hand, as stated in Theorem \ref{thm:flip}, the indeterminacy locus of $\psi^-$ does not meet any fixed point component of $X(i,j)$ of $D$-weight $a_{k}-\tau_-$, for $i+1<k<j$, that correspond via $\varphi_{i,j}$ to the fixed point components of $X^\flat$ of $L$-weight $a_{k}$, for $i+1<k<j$, so (i) follows. 

Property (ii) follows in a similar way: both $\varphi_{i,j}$ and $\psi^-$ map isomorphically the fixed point components of $D$-weights $a_{k}$, for  $i+1<k<j$.

For part (iii) we denote by $Y_0^-$ the sink of $X(i,j)$, and note that $Y_0^-=(\varphi_{i,j})_*(Y_0)$. Then, by Theorem \ref{thm:flip}, 
$$\psi^-_*(D-\tau Y_0^-)=(\varphi_{i+1,j})_*(L(\tau_-+\tau,\tau_+))$$ 
is ample on $X(i+1,j)$, for every $\tau\in (a_{i+1}-\tau_-,a_{i+2}-\tau_-)$. Since this holds for every $L(\tau_-,\tau_+)\in N_{i,j}$, setting $\tau':=\tau_-+\tau$, we conclude that $(\varphi_{i+1,j})_* L(\tau',\tau_+)$ is ample on $X(i+1,j)$ for every $L(\tau',\tau_+)\in N_{i+1,j}$. 

On the other hand, since the Mori chamber decomposition is a refinement of the SBL decomposition, the ample cone of $X(i+1,j)$ must be contained in $N_{i+1,j}$, and we conclude that $\Nef(X(i+1,j))=\overline{N_{i+1,j}}$.   
\end{proof}

We finish this section with some observations regarding the GIT quotients of the small modifications of $X^\flat$. In the following statement we consider the set of indices $I(X,L)$ introduced in Remark \ref{rem:nonbordism}.

\begin{corollary}\label{cor:GITSQM1}
In the Setup \ref{ass:picZ}, for every $i\in I(X,L)$: 
\begin{itemize}
\item[(i)] the closure of  $N_{i,i+1}$ is the nef cone of the $\P^1$-bundle $\widehat{X}(i,i+1)$;
\item[(ii)] the intersection
$$\overline{N_{i,i+1}}\cap C_{\gen}^\perp=\left\{mL(\tau,\tau)\in 
\cMov{X^\flat}\left|\begin{array}{l} m\geq 0\\a_i\leq\tau\leq a_{i+1}\end{array}\right.\right\}$$
is the nef cone of the geometric quotient $\GX(i,i+1)$;
\item[(iii)] the intersection $\cMov{X^\flat}\cap C_{\gen}^\perp$ equals $\cMov{\GX(i,i+1)}$.
\end{itemize} 
\end{corollary}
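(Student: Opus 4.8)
The plan is to read off all three statements from the structural description of the small $\Q$-factorial modifications of $X^\flat$ obtained in Proposition \ref{prop:precise}, combined with Proposition \ref{prop:P1model} and the GIT description of $\GX(i,i+1)$ from Proposition \ref{prop:semisection2}. Since $i\in I(X,L)$, the pair $(i,i+1)$ belongs to $\cM$ (the chamber $N_{i,i+1}$ is one of the chambers of $\Mov(X^\flat)$, which by the description at the end of Section \ref{ssec:linsyst} is missing only when $\dim Y_0=0$ and $(i,i+1)=(0,1)$, or $\dim Y_r=0$ and $(i,i+1)=(r-1,r)$ — exactly the cases excluded from $I(X,L)$). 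Hence Proposition \ref{prop:precise} applies to $(i,i+1)$ and gives a smooth projective variety $X(i,i+1)$ with $\Nef(X(i,i+1))=\overline{N_{i,i+1}}$, carrying a B-type equalized $\C^*$-action of criticality one.

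For part (i): a smooth projective variety with an equalized B-type $\C^*$-action of criticality one has sink and source as its only fixed point components, both divisors, with $\nu^\pm=1$ everywhere; by the Bia{\l}ynicki-Birula decomposition its BB-cells are line bundles over sink and source, so $X(i,i+1)$ is a $\PP^1$-bundle over the common geometric quotient, which is $\GX(i,i+1)$ by the argument in the proof of Corollary \ref{cor:SX-}. On the other hand, by Proposition \ref{prop:P1model} and Remark \ref{rem:nonbordism}, $\widehat{X}(i,i+1)$ is a small modification of $X^\flat$ which is also a $\PP^1$-bundle over $\GX(i,i+1)$; being isomorphic in codimension one to $X(i,i+1)$ and both being $\PP^1$-bundles (hence having no small modifications themselves, as their Mori cones are spanned by the fibers and a curve in a section), they coincide. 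Thus $\Nef(\widehat{X}(i,i+1))=\overline{N_{i,i+1}}$.

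For part (ii): the hyperplane $C_{\gen}^\perp$ in $\NU(X^\flat)$ is, in the coordinates $(\tau_-,\tau_+)$ on the slice $\cS_L$, the locus $\tau_-=\tau_+$ (since $\beta^*L\cdot C_{\gen}=\delta>0$ and $Y_i^\flat\cdot C_{\gen}=1$, so $L(\tau_-,\tau_+)\cdot C_{\gen}=\delta-\tau_--( \delta-\tau_+)=\tau_+-\tau_-$). Intersecting $\overline{N_{i,i+1}}=\{L(\tau_-,\tau_+): a_i\le\tau_-\le\tau_+\le a_{i+1}\}$ with this hyperplane gives exactly the segment $\{L(\tau,\tau): a_i\le\tau\le a_{i+1}\}$, proving the displayed equality. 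Under the $\PP^1$-bundle projection $\pi\colon\widehat{X}(i,i+1)\to\GX(i,i+1)$, the curves $C_{\gen}$ are (deformations of) the fibers, so $\pi^*$ identifies $\NU(\GX(i,i+1))$ with $C_{\gen}^\perp\subset\NU(\widehat{X}(i,i+1))=\NU(X^\flat)$, and pulls back the nef cone of the base to $\Nef(\widehat{X}(i,i+1))\cap C_{\gen}^\perp$; by part (i) this is $\overline{N_{i,i+1}}\cap C_{\gen}^\perp$. Alternatively one checks directly, via Corollary \ref{cor:BSL} and Lemma \ref{lem:projections2}, that $L(\tau,\tau)$ restricted to this slice is the pullback of an ample class on $\Proj A_\tau=\GX(i,i+1)$ for $\tau\in(a_i,a_{i+1})$.

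For part (iii): the point is that $\cMov{X^\flat}\cap C_{\gen}^\perp$ is independent of $i$. Indeed, the walls of the SBL decomposition that lie on the slice $\tau_-=\tau_+$ are, by Corollary \ref{cor:SBL}, precisely the points $L(a_k,a_k)$, and between consecutive ones the class $L(\tau,\tau)$ is movable on $X^\flat$ iff it is (semi)ample on one of the $\widehat{X}(i,i+1)$, i.e. iff $L(\tau,\tau)$ pulls back from an ample class on some $\GX(i,i+1)$; running over all $i\in I(X,L)$ sweeps the whole segment from $L(0,0)$ (or $L(a_1,a_1)$, if $\dim Y_0=0$) to $L(\delta,\delta)$ (or $L(a_{r-1},a_{r-1})$, if $\dim Y_r=0$). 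Since all the $\GX(i,i+1)$ are connected by flips (Theorem \ref{thm:main}(1)), hence isomorphic in codimension one, they share a common movable cone inside $C_{\gen}^\perp$, and this common cone is exactly $\cMov{X^\flat}\cap C_{\gen}^\perp$, glued from the nef cones $\overline{N_{i,i+1}}\cap C_{\gen}^\perp$ of part (ii). The main obstacle is the bookkeeping at the boundary of $\cMov{X^\flat}$ in the non-bordism cases: one must check that the extreme chamber walls of the movable cone, namely $L(a_1,a_1)$ when $\dim Y_0=0$ and $L(a_{r-1},a_{r-1})$ when $\dim Y_r=0$, are genuinely the boundary rays of $\cMov{\GX(i,i+1)}\cap C_{\gen}^\perp$ and not merely SBL-walls — this follows from Remark \ref{rem:movable2} together with the identification of $C_{0,r-1}$, $C_{1,r}$ as the curves cutting out these walls.
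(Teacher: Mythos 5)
Your overall strategy (identify $\widehat{X}(i,i+1)$ with the small modification $X(i,i+1)$ of Proposition \ref{prop:precise}, read off (ii) as the facet $\overline{N_{i,i+1}}\cap C_{\gen}^\perp$, and glue the nef cones of the quotients for (iii)) is close to the paper's, and your computation that $C_{\gen}^\perp$ is the locus $\tau_-=\tau_+$ is correct. But there is a genuine gap in your part (i), precisely at the step where you conclude that $X(i,i+1)$ and $\widehat{X}(i,i+1)$ coincide. Your justification is that a $\P^1$-bundle ``has no small modifications itself, as its Mori cone is spanned by the fibers and a curve in a section.'' This is false in the present situation: $\GX(i,i+1)$ has Picard number two (so $\widehat{X}(i,i+1)$ has Picard number three and its Mori cone is not spanned by two classes), the quotients $\GX(k,k+1)$ are related by genuine flips, and consequently $\widehat{X}(i,i+1)$ does admit nontrivial small $\Q$-factorial modifications --- indeed all the varieties $X(k,l)$ are such modifications of it. So ``both are $\P^1$-bundles over $\GX(i,i+1)$ and isomorphic in codimension one'' does not by itself force them to be equal by the argument you give, and without that identification you have not located $\Nef(\widehat{X}(i,i+1))$ among the chambers. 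The paper avoids this by a direct argument: using Proposition \ref{prop:semisection2} and Lemma \ref{lem:projections2} one sees that $L(\tau,\tau)$, $\tau\in(a_i,a_{i+1})$, is a supporting $\Q$-divisor of the rational contraction $X^\flat\dashrightarrow\widehat{X}(i,i+1)\to\GX(i,i+1)$, hence lies on a facet of $\Nef(\widehat{X}(i,i+1))$; since $N_{i,i+1}$ is the unique Mori chamber containing such a class in its closure, $\Nef(\widehat{X}(i,i+1))=\overline{N_{i,i+1}}$. You actually gesture at exactly this in the ``alternatively'' sentence of your part (ii); promoting that remark to the main argument for (i) repairs the proof.

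Two smaller points. In (iii), your boundary check invokes the curves $C_{0,r-1}$, $C_{1,r}$ and Remark \ref{rem:movable2}; these control $\Mov(X^\flat)$, but to cap $\cMov{\GX(i,i+1)}$ you still need the inclusion $\cMov{\GX(i,i+1)}\subseteq\cMov{X^\flat}\cap C_{\gen}^\perp$ (e.g.\ because $\pi^*$ of a movable class on the base is movable on the $\P^1$-bundle, hence on the small modification $X^\flat$). The paper instead shows directly that the extremal contractions $\GX(0,1)\to\GX(0,0)$ (a projective bundle) and $\GX(1,2)\to\GX(1,1)$ (divisorial, via Lemma \ref{lem:nonbordism}) are not small, so the corresponding walls bound the movable cone of the quotient; either route works, but yours needs the missing inclusion spelled out.
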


\begin{proof}
First of all, as noted in Remark \ref{rem:nonbordism}, for each such $i$ the nef cone of $\widehat{X}(i,i+1)$ is the closure of one of the Mori chambers of $X^\flat$, and the projection $\widehat{X}(i,i+1)\to \GX(i,i+1)$ corresponds to a facet of that chamber. On the other hand, by Proposition \ref{prop:semisection2} and Lemma \ref{lem:projections2} 
$$
\GX(i,i+1)=\Proj\left(\bigoplus_{\substack{m\geq 0\\m\tau\in\Z}}\HH^0(X,mL)_{m\tau}\right)=\Proj\left(\bigoplus_{\substack{m\geq 0\\m\tau\in\Z}}\HH^0(X^\flat,mL(\tau,\tau))\right),
$$
which implies that $L(\tau,\tau)$ is a supporting $\Q$-divisor of the rational contraction 
$X^\flat\dashrightarrow \widehat{X}(i,i+1)\to \GX(i,i+1)$. Since the only chamber containing $L(\tau,\tau)$ in its boundary is $N_{i,i+1}$, we conclude that $\Nef\big(\widehat{X}(i,i+1)\big)=\overline{N_{i,i+1}}$, that is, (i).

Moreover, $\Nef\big(\GX(i,i+1)\big)$ must be the facet of $\overline{N_{i,i+1}}$ containing $L(\tau,\tau)$, which is $\overline{N_{i,i+1}}\cap C_{\gen}^\perp$, and so (ii) follows.

In order to proof part (iii) we first note that the geometric quotients $\GX(i,i+1)$ $i\in I(X,L)$ are all isomorphic in codimension one by construction, so we may conclude by showing that the extremal rays of $\cMov{X^\flat}\cap C_{\gen}^\perp$ do not correspond to small contractions. For simplicity let us consider only the left hand side extremal ray $R$, and denote by $\GX(i_0,i_0+1)$ the geometric quotient whose nef cone contains that ray; according to Remark \ref{rem:nonbordism} we have two situations: either $i_0=0$, or $i_0=1$. 

If $i_0=0$, the contraction $\GX(0,1)\to \GX(0,0)$ associated to the ray $R$ is the projective bundle $Y_0^\flat\to Y_0$. If else $i_0=1$, the contraction $\GX(1,2)\to \GX(1,1)$ is divisorial: in fact by Lemma \ref{lem:nonbordism} in this case there is a unique component $Y_1$ of weight $a_1$, and $\nu^+(Y_1)=1$; therefore the subset of $\GX(1,2)$ parametrizing orbits converging to $Y_1$ at infinity is a divisor (by the Bia{\l}ynicki-Birula decomposition) that gets contracted onto $Y_1\subset \GX(1,1)$. This concludes the proof.
\end{proof}

\begin{remark}\label{rem:GITSQM1}
Similar arguments show that, for every $i<j$, the geometric (resp. semigeometric) quotients of the small modification $X(i,j)$ are the varieties $\GX(k,k+1)$, $i\leq k\leq j-1$ (resp. $\GX(k,k)$, $i\leq k\leq j$). In particular, the sink and the source of $X(i,j)$ are $\GX(i,i)$, $\GX(j,j)$. 
\end{remark}

\begin{remark}\label{bigcone}
In the case in which the sink of the action in $X$ is a point, we have seen that the simplicial cone generated by $L(0,0),L(0,a_1),L(a_1,a_1)$ is not contained in $\Mov(X^\flat)$. However, it is still contained in the pseudo-effective cone of $X^\flat$, and the $\Q$-divisors contained in it have a geometric interpretation. 

In fact, the complete linear system of a large enough integral multiple of a $\Q$-divisor in $\langle L(0,0),L(0,a_1),L(a_1,a_1)\rangle \setminus \langle L(0,a_1),L(a_1,a_1)\rangle$ has a fixed point component supported in the unique divisor $\overline{X^-(Y_1)}$. In particular, the divisors of the form $m L(\tau,\tau)$, $\tau\in\Q\cap(0,a_1)$, $m\gg 0$ still define rational maps from $X^\flat$ to the geometric quotient $\GX(0,1)=\GX(1,1)$, as we knew from GIT (see Proposition \ref{prop:semisection2}). 
\end{remark}

%!TEX root = WORS3.tex

\section{Examples: equalized actions on rational homogeneous manifolds}\label{sec:RH}

The paper \cite{WORS1}, in which the birational geometry of equalized $\C^*$-actions of small bandwidth was considered, shows that important examples of equalized actions can be found using  Representation Theory. Let us start this section by presenting some of these. As in Section \ref{sec:movbordism}, we will be interested in smooth projective varieties of Picard number one admitting an equalized nontrivial $\C^*$-action. We will also consider a linearization of the action on the ample generator $L$ of $\Pic(X)$ and denote the bandwidth and the criticality of the action by $\delta$ and $r$, respectively.

First of all, we will introduce the notation that we will use to describe our varieties and actions.

\begin{notation}\label{notn:RH}
Let $G$ be a semisimple algebraic group, with Lie algebra $\fg$. We consider  a Borel subgroup $B\subset G$ and a Cartan subgroup $H\subset B\subset G$, denote by $\Phi$ the root system of $G$ with respect to $H$, by $W=N_G(H)/H$ the Weyl group of $G$, 
by $\Delta=\{\alpha_1,\dots,\alpha_r\}$ the base of positive simple roots of $\Phi$ induced by $B\supset H$, by $\Phi^+$ the set of positive roots determined by $B$,
and by $\cD$ the Dynkin diagram of $G$. We will assume that $\cD$ is connected, i.e., that the Lie algebra $\fg$ of $G$ is simple (and we will say that $G$ is a {\it simple} algebraic group), and that $G$ is simply connected. In particular the lattice of characters $\Mo(H)$ of $H$ coincides with the lattice of weights of $\fg$, generated by the {\em fundamental dominant weights} $\{\omega_1,\dots,\omega_n\}$. 

A fundamental weight $\omega_i$ defines a representation $V_{\omega_i}$ such that the unique closed $G$-orbit in the projectivization $\P(V_{w_i})$ is a {\em rational homogeneous variety}. This variety is completely determined by the choice of a node $i$ in the Dynkin diagram $\cD$, hence we will denote it by $\cD(i)$.\end{notation}

\subsection{Examples with $r=1$}\label{ssec:exr1} The first example of variety admitting an equalized action of Picard number one and criticality $r=1$ is the projective space $\P^n$, together with the linear action that fixes the points of a hyperplane $H\subset \P^n$ and a point $P\notin H$. Other examples of this kind can be found within the class of horospherical varieties. Following \cite{Pas}, besides $\P^n$, smooth projective horospherical varieties of Picard number one are determined by the choice of some triples $(\cD,i,j)$, where $\cD$ is the Dynkin diagram of a simple Lie algebra and $i,j$ are two nodes of the diagram. One requires the morphisms $\cD(i,j)\to \cD(i)$, $\cD(i,j)\to \cD(j)$ to be projective bundles, denotes by $\cL_i,\cL_j$ the pullbacks of the ample generators of $\Pic(\cD(i))$, $\Pic(\cD(j))$, and constructs the associated horospherical variety as the contraction of the decomposable $\P^1$-bundle $\P(\cL_i\oplus \cL_j)\to X$ via the corresponding tautological line bundle $\cO(1)$ (see \cite[Section 4]{WORS1} for details). The variety in question inherits a $\C^*$-action induced by the natural one on $\P(\cL_i\oplus \cL_j)$, which has $\delta=r=1$; its (extremal) fixed point components  are, clearly, $\cD(i)$, $\cD(j)$. Table \ref{tab:bw1} contains the complete list of these varieties.

\begin{table}[h!!]
\caption{Smooth projective horospherical varieties of Picard number one.\label{tab:bw1}}
\begin{tabular}{|l|l|l|l|}
\hline
$\cD(i)$&$\cD(j)$&$n,i,j$&$X$\\\hline\hline
$\DA_n(1)$&$\DA_n(n)$&$n\geq 2$&$\DD_{n+1}(1)$\\\hline
$\DA_n(i)$&$\DA_n(i+1)$&$n\geq 3,\,\,i<n$&$\DA_{n+1}(i+1)$\\\hline
$\DB_n(n-1)$&$\DB_n(n)$&$n\geq 3$&not homogeneous\\\hline
$\DB_3(1)$&$\DB_3(3)$&&not homogeneous\\\hline
$\DC_n(i+1)$&$\DC_n(i)$&$n\geq 2,\,\,i<n$&not homogeneous\\\hline
$\DD_n(n-1)$&$\DD_n(n)$&$n\geq 4$&$\DB_n(n)$\\\hline
$\DF_4(2)$&$\DF_4(3)$&&not homogeneous\\\hline
$\DG_2(2)$&$\DG(1)$&&not homogeneous\\\hline
\end{tabular}
\end{table}

\begin{center}
\begin{figure}[h!]
\begin{tikzpicture}[scale=0.4]
\draw (-3,0) -- (3,0) -- (0,-3) -- (-3,0);
\fill[black!90!white] (-3,0) circle (0.6mm); \node[anchor=south] at (-3,0) {$L(0,0)$};
\fill[black!90!white] (3,0) circle (0.6mm); \node[anchor=south] at (3,0) {$L(1,1)$};
\fill[black!90!white] (0,-3) circle (0.6mm); \node[anchor=north] at (0,-3) {$\,L(0,1)$};
\end{tikzpicture}
\caption{The movable cone of $X^\flat$ in the case of criticality one.\label{fig:movable_r1}}
\end{figure}
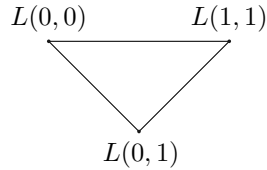
\end{center}
\subsection{Examples with $r=2$}\label{ssec:exr2}
By \cite[Theorem~4.1]{RW}, the only smooth projective varieties admitting an equalized action of bandwidth two with isolated extremal fixed points are the smooth quadrics, $\DB_n(1)$, $\DD_n(1)$. 

If $X$ is a rational homogeneous space of Picard number one and $L$ is the ample generator of $\Pic(X)$, examples in which $r=2$ can be found among adjoint varieties. Let us recall that, given a simple Lie algebra $\fg$, its {\em adjoint variety} is the unique closed orbit of the action of the adjoint group of $\fg$ on $\P(\fg^\vee)$. In \cite[Section~7]{WORS1} all the possible adjoint varieties admitting an equalized $\C^*$-action with $\delta=r=2$ have been described in terms of short gradings of Lie algebras  (see Table \ref{tab:bw2}). 
\begin{table}[!h!]
\caption{Adjoint varieties of Picard number one with an equalized action of bandwidth two.\label{tab:bw2}}
\begin{tabular}{|HcH|c|c|}
\hline
type&$X_{\ad}$&short gradings&$Y_0,Y_2$&$Y_{1}$\\
\hline\hline
$\DB_{m}$&$\DB_{m}(2)$&$1$&$\DB_{m-1}(1)$&$\DB_{m-1}(2)$\\\hline
$\DD_{m}$&$\DD_{m}(2)$&$1$&$\DD_{m-1}(1)$&$\DD_{m-1}(2)$\\\hline
$\DD_m$&$\DD_m(2)$&$m-1,m$&$\DA_{m-1}(2)$&$\DA_{m-1}(1,m-1)$\\\hline
$\DE_6$&$\DE_6(2)$&$1,6$&$\DD_5(5)$&$\DD_5(2)$\\\hline
$\DE_7$&$\DE_7(1)$&$7$&$\DE_6(1)$&$\DE_6(2)$\\\hline
\end{tabular}
\end{table}

We have represented the movable cones of these varieties in Figure \ref{fig:movable_r2}. Note that the arguments in \cite[Section~6]{WORS1} provide, for every smooth projective variety $X$ together with an equalized action with $\delta=r=2$ and non isolated extremal fixed points, two small modifications of $X^\flat$ that are $\P^1$-bundles. They correspond, in the language introduced here, to the varieties $X(0,1)$ and $X(1,2)$, whose nef cones are the two triangular chambers in the right hand side of Figure \ref{fig:movable_r2}.

\begin{center}
\begin{figure}[h!]
\begin{tikzpicture}[scale=0.5]
\draw (-3,2) -- (-1,0) -- (-3,-2) -- (-5,0) --  (-3,2);
\fill[black!90!white] (-3,2) circle (0.6mm); \node[anchor=south] at (-3,2) {$L(1,1)$};
 \fill[black!90!white] (-1,0) circle (0.6mm); \node[anchor=west] at (-1,0) {$L(1,2)$};
\fill[black!90!white] (-5,0) circle (0.6mm); \node[anchor=east] at (-5,0) {$L(0,1)$};
\fill[black!90!white] (-3,-2) circle (0.6mm); \node[anchor=north] at (-3,-2) {$\,L(0,2)$};
\node[anchor=north] at (-3,-3) {$\Mov(\mbox{Q}^\flat)$};

\draw (7,2) -- (5,0) -- (7,-2) -- (9,0) --  (7,2);
\draw (5,0) -- (3,2) -- (11,2) -- (9,0);
\draw [fill=black!10] (3,2)--(5,0)--(7,2)--(3,2);
\draw [fill=black!10] (7,2)--(9,0)--(11,2)--(7,2);
\fill[black!90!white] (7,2) circle (0.6mm); \node[anchor=south] at (7,2) {$L(1,1)$};
 \fill[black!90!white] (3,2) circle (0.6mm); \node[anchor=south] at (3,2) {$L(0,0)$};
\fill[black!90!white] (11,2) circle (0.6mm); \node[anchor=south] at (11,2) {$L(2,2)$};
\fill[black!90!white] (7,-2) circle (0.6mm); \node[anchor=north] at (7,-2) {$\,L(0,2)$};
\node[anchor=north] at (7,-3) {$\Mov(X^{\flat}_{\ad})$};
\end{tikzpicture}
\caption{The movable cone of $X^\flat$ in the case of quadrics and of adjoint varieties of criticality two.\label{fig:movable_r2}}
\end{figure}
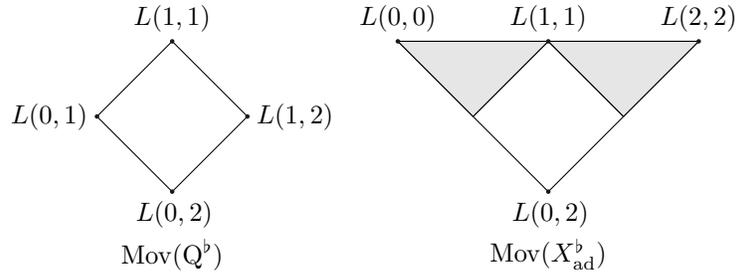
\end{center}

\subsection{Examples with $r=3$}\label{ssec:exr3} A complete list of smooth projective varieties admitting an equalized action of bandwidth three with isolated fixed points is known (see \cite{RW,WORS1}). In the case of Picard number one the varieties in question are all rational homogeneous and they can be described in terms of special Cremona transformations and Severi varieties (that appear as the inner fixed point components $Y_1,Y_2$ of the action). Their complete list is given in Table \ref{tab:bw3}.

\begin{table}[h!!]
\caption{Varieties of Picard number one with an equalized action of bandwidth three with isolated extremal fixed points.\label{tab:bw3}}
\begin{tabular}{|l|l|l|l|}
\hline
$X$&$Y_1,Y_2$\\\hline\hline
$\DC_3(3)$&$v_2(\DA_2(1))$\\\hline
$\DA_3(3)$&$\DA_2(1)\times \DA_2(2)$\\\hline
$\DD_6(6)$&$\DA_5(2)$\\\hline
$\DE_7(7)$&$\DE_6(1)$\\\hline
\end{tabular}
\end{table}

In \cite{WORS1} it has been shown that the blowup of such a variety along the sink and the source admits a small modification which is a decomposable $\P^1$-bundle (over the blowup of a projective space along a Severi variety). Again, the nef cone of this $\P^1$-bundle is the closure of the triangular chamber represented in Figure \ref{fig:movable_r3}, and the small modification from $X^\flat$ onto it is a composition of two (commuting) %Atiyah 
flips of the type described in Section \ref{sec:bord}.  

\begin{center}
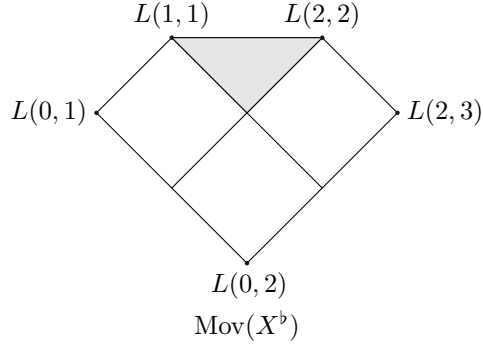
\begin{figure}[h!]
\begin{tikzpicture}[scale=0.5]
\draw (7,2) -- (5,0) -- (7,-2) -- (9,0) --  (7,2);
\draw  (5,0) -- (3,2) --(5,4) -- (7,2) -- (9,4) -- (11,2) -- (9,0);
\draw  (5,4) -- (9,4);
\fill[black!90!white] (5,4) circle (0.6mm); \node[anchor=south] at (5,4) {$L(1,1)$};
 \fill[black!90!white] (3,2) circle (0.6mm); \node[anchor=east] at (3,2) {$L(0,1)$};
\fill[black!90!white] (9,4) circle (0.6mm); \node[anchor=south] at (9,4) {$L(2,2)$};
 \fill[black!90!white] (11,2) circle (0.6mm); \node[anchor=west] at (11,2) {$L(2,3)$};
\fill[black!90!white] (7,-2) circle (0.6mm); \node[anchor=north] at (7,-2) {$\,L(0,2)$};
\draw [fill=black!10] (5,4)--(7,2)--(9,4)--(5,4);
\node[anchor=north] at (7,-3) {$\Mov(X^{\flat})$};
\end{tikzpicture}
\caption{The movable cone of $X^\flat$ in the case of variety of criticality three and isolated fixed points.\label{fig:movable_r3}}
\end{figure}
\end{center}

\subsection{Rational homogeneous examples with arbitrary criticality}\label{ssec:exarb} A description of $\C^*$-actions on rational homogeneous varieties of Picard number one  has been given in \cite{fra1}, with particular attention to the actions of minimal bandwidth. The author describes also when these actions are equalized. For the reader's convenience we include here some elementary facts that belong to that paper.

Let $i$ be a node of the connected Dynkin diagram $\cD$ of a group $G$ as in Notation \ref{notn:RH}, and set $X:=\cD(i)\subset\P(V_{\omega_i})$, $L:=\cO_{\P(V_{\omega_i})}(1)_{|X}$. In order to study $\C^*$-actions on $(X,L)$, we note first that we may always assume that the action  extends to the action of $H$ in $X$ by means of a homomorphism $\jmath:\C^*\to H$. The fixed points of the action of $H$ on $X$, as well as the weights of the action on $L$ and on the tangent spaces of $X$ at those points, can be written in terms of $\Phi$ and $W$, so that we may compute the $\C^*$-weights of $L$ and $T_X$ by means of the induced map $\jmath^*:\Mo(H)\to \Mo(\C^*)$. By choosing an isomorphism $\Mo(\C^*)\simeq \Z$, this map defines a grading of $\fg$, whose graded pieces are:
$$
\fg_0:=\fh\oplus\bigoplus_{\substack{\alpha\in\Phi\\\jmath^*(\alpha)=0}}\fg_{\alpha},\quad 
\fg_m:=\bigoplus_{\substack{\alpha\in\Phi\\\jmath^*(\alpha)=m}}\fg_{\alpha}, \,\,\,m\neq 0.
$$ 
One may then easily check that the relation among equalization and shortness of the grading found in \cite[Section~5]{WORS1} holds for every $X$ as above:

\begin{proposition}\label{prop}
The action of $\C^*$ on $X$ is equalized if the induced $\Z$-grading on $X$ is short, that is $\fg_m=0$ for $m\neq -1,0,1$. 
\end{proposition}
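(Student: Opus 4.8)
The plan is to reduce the statement to the explicit description of the $\C^*$-action on $X=\cD(i)$ in terms of the grading of $\fg$, and then apply the equalization criterion of Lemma \ref{lem:equalized}, which says that the action is equalized precisely when $\C^*$ acts with weights $\pm 1$ on the bundles $\cN^\pm(Y)$ over every fixed point component $Y$. First I would fix a $\C^*$-action, encoded by $\jmath\colon\C^*\to H$ and the induced grading $\fg=\bigoplus_m\fg_m$, and recall (referring to \cite{fra1}) the combinatorial description of the fixed locus: the fixed points of $H$ on $X=\cD(i)$ correspond to the Weyl group orbit $W\cdot\omega_i$, i.e.\ to the weights $w(\omega_i)$, $w\in W$, and the fixed point components of the $\C^*$-action are the unions of those $w(\omega_i)$ having the same value under $\jmath^*$. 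At such a fixed point $[v_{w(\omega_i)}]$, the tangent space $T_X$ decomposes as a sum of root spaces $\fg_\alpha$ with $\alpha$ ranging over the roots $\alpha$ with $\langle w(\omega_i),\alpha^\vee\rangle<0$ (the roots ``lowering'' the extremal weight), and the $\C^*$-weight of $T_X$ along $\fg_\alpha$ is exactly $\jmath^*(\alpha)$.

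The key step is then purely arithmetic. Assume the grading is short, so $\jmath^*(\alpha)\in\{-1,0,1\}$ for every root $\alpha\in\Phi$. Fix a fixed point component $Y$ and a point $y\in Y$ of the above form. The positive part $\cN^+(Y)$ of the normal bundle at $y$ is the sum of the root spaces $\fg_\alpha$ appearing in $T_{X,y}$ with $\jmath^*(\alpha)>0$; by shortness every such $\jmath^*(\alpha)$ equals $+1$, and symmetrically every weight occurring in $\cN^-(Y)$ equals $-1$. Since $X=\cD(i)$ is homogeneous under $G$ and $Y$ is $\C^*_{\jmath}$-fixed, the decomposition $\cN_{Y|X}=\cN^+(Y)\oplus\cN^-(Y)$ is $\C^*$-equivariant and the weights computed at one point $y\in Y$ are constant along the component (the isotropy of $y$ in the stabilizer acts, and connectedness of $Y$ forces the weights to be locally constant, hence constant). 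Therefore the weights of $\C^*$ on $\cN^\pm(Y)$ are all $\pm1$, and Lemma \ref{lem:equalized} gives that the action is equalized. I would package the weight computation at a single fixed point and then invoke $G$-homogeneity to spread it over each $Y$, rather than repeating the argument pointwise.

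The main obstacle I anticipate is making precise the claim ``the $\C^*$-weights on $T_X$ at a fixed point $[v_{w(\omega_i)}]$ are the values $\jmath^*(\alpha)$ for the relevant roots $\alpha$'', since this requires identifying the tangent space of the homogeneous space $\cD(i)$ at an extremal-weight line with a sum of root spaces and checking the sign conventions match those in Notation \ref{notn:RH} (where the normalized weight at the sink is $0$); one must also be careful that ``short grading'' is imposed on all of $\Phi$ and not merely on the simple roots, which is exactly what guarantees the weights are genuinely $\pm1$ rather than some larger integer for a high root. A secondary, minor point is that the converse need not hold, which is why the statement is phrased as an implication and no effort is needed to characterize equalization; I would simply note this and not pursue it. Once the root-space description of the tangent space is in hand, the rest is the one-line observation that shortness forces $\nu^\pm$-weights to be $\pm1$, so the bulk of the write-up is setting up notation and citing \cite{fra1} and \cite[Section~5]{WORS1} for the combinatorial facts.
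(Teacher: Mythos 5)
Your proof is correct and is exactly the ``easy check'' that the paper leaves implicit (the paper simply asserts the statement, referring to \cite[Section~5]{WORS1} and \cite{fra1} for the combinatorics): compute the tangent weights $\jmath^*(\alpha)$ at an $H$-fixed point $[v_{w(\omega_i)}]$, observe that shortness forces the nonzero ones to be $\pm1$, and spread this over each component of $X^{\C^*}$ by connectedness, concluding via Lemma~\ref{lem:equalized}. Two minor points to tighten in a write-up: each $\C^*$-fixed component does contain a point of the form $[v_{w(\omega_i)}]$ because it is $H$-invariant and projective (Borel fixed point theorem), and your phrase identifying the fixed components with ``unions of those $w(\omega_i)$ having the same value under $\jmath^*$'' should be read only as a grouping of the $H$-fixed points by $L$-weight, since the $\C^*$-fixed components are in general positive-dimensional.
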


A classical example of this kind is the case of the Grassmannian of $i$-dimensional subspaces of an $(n+1)$-dimensional vector space $V$, which we denote by $X=\DA_n(i)$. A similar description can be done in the case of rational homogeneous varieties of type $\DB$, $\DC$ and $\DD$, and also in the exceptional cases, with the help of some representation-theoretical tools. We refer to \cite{fra1} for details. 

The line bundle $L$ is, in this case, the Pl\"ucker line bundle, that provides the embedding of $X$ into $\P(\wedge^iV)$. One may then construct equalized $\C^*$-actions on $X$ by choosing a decomposition 
$$
V=V_0\oplus V_1,
$$  
and a $\C^*$-action on $V$ whose weight is $0$ on $V_0$ and $1$ on $V_1$. Setting $k:=\dim V_0$, and assuming, without loss of generality, that $i\leq k\leq n-k+1$, one can then check that, for $m>0$:
$$
\fg_{m}=\bigoplus_{\substack{\alpha\in\Phi^+\\\alpha-m\alpha_k\in \Phi^+}}\fg_{\alpha}.
$$
In particular $\fg_m=0$ for $m>1$, that is, the grading is short.
One may compute that the fixed point components of the action are:
$$
\DA_{k-1}(i),\,\,\, \DA_{k-1}(i-1)\times \DA_{n-k}(1),\,\,\,\dots \,\,\,\DA_{k-1}(1)\times \DA_{n-k}(i-1),\,\,\, \DA_{n-k}(i),
$$
where we set $\DA_{r}(r+1)$ to be a point.

The $\C^*$-weights of $L$ on these components will be $0,1,\dots,i-1,i$, respectively. Note that our result provides a description of the Mori chamber decomposition of the movable cone of $\DA_n(i)^\flat$, which is not a Fano variety with the exception of the cases $i=1,n$. In particular, these examples show that one may find, within the class of rational homogeneous spaces, equalized $\C^*$-actions of arbitrary large criticality.

Note that will have isolated extremal fixed points in the following cases:
\begin{itemize}
\item $i=k<n+1-k$ (isolated sink),
\item $i=k=n+1-k$ (isolated sink and source).
\end{itemize}

In the latter case the sink and the source of $\DA_n(i)^\flat$ can be both identified with the projectivization of the space of $(n+1)/2 \times (n+1)/2$ matrices, and the birational map among them is known (we refer to \cite{MMW} for details) to be induced by the inversion function. This description allows us to decompose the (projectivized) inversion function as a sequence of a blowup, $(n+1)/2-2$ flips, and a blowdown.

\bibliographystyle{plain}
\bibliography{bibliomin}

\begin{thebibliography}{10}

\bibitem{AKMW}
Dan Abramovich, Kalle Karu, Kenji Matsuki, and Jaros{\l}aw W{\l}odarczyk.
\newblock Torification and factorization of birational maps.
\newblock {\em J. Amer. Math. Soc.}, 15(3):531--572, 2002.

\bibitem{ADHL}
Ivan {Arzhantsev}, Ulrich {Derenthal}, J\"urgen {Hausen}, and Antonio {Laface}.
\newblock {\em {Cox rings}}, volume 144.
\newblock Cambridge: Cambridge University Press, 2015.

\bibitem{BS}
Mauro~C. Beltrametti and Andrew~J. Sommese.
\newblock {\em The adjunction theory of complex projective varieties},
  volume~16 of {\em De Gruyter Expositions in Mathematics}.
\newblock Walter de Gruyter \& Co., Berlin, 1995.

\bibitem{BBC}
A.~Bia\l~ynicki Birula, J.~B. Carrell, and W.~M. McGovern.
\newblock {\em Algebraic quotients. {T}orus actions and cohomology. {T}he
  adjoint representation and the adjoint action}, volume 131 of {\em
  Encyclopaedia of Mathematical Sciences}.
\newblock Springer-Verlag, Berlin, 2002.

\bibitem{BB}
Andrzej Bia{\l}ynicki-Birula.
\newblock Some theorems on actions of algebraic groups.
\newblock {\em Ann. of Math. (2)}, 98:480--497, 1973.

\bibitem{BBS}
Andrzej Bia{\l}ynicki-Birula and Joanna \'Swi{\c e}cicka.
\newblock Complete quotients by algebraic torus actions.
\newblock In {\em Group actions and vector fields ({V}ancouver, {B}.{C}.,
  1981)}, volume 956 of {\em Lecture Notes in Math.}, pages 10--22. Springer,
  Berlin, 1982.

\bibitem{BBS1}
Andrzej Bia{\l}ynicki-Birula and Joanna \'{S}wi{\c{e}}cicka.
\newblock Generalized moment functions and orbit spaces.
\newblock {\em Amer. J. Math.}, 109(2):229--238, 1987.

\bibitem{BWW}
Jaros{\l}aw Buczy{\'n}ski, Jaros{\l}aw~A. Wi{\'s}niewski, and Andrzej Weber.
\newblock Algebraic torus actions on contact manifolds.
\newblock {\em To appear in J. Differ. Geom. Preprint ArXiv:1802.05002}, 2018.

\bibitem{CARRELL}
James~B. Carrell.
\newblock Torus actions and cohomology.
\newblock In {\em Algebraic quotients. {T}orus actions and cohomology. {T}he
  adjoint representation and the adjoint action}, volume 131 of {\em
  Encyclopaedia Math. Sci.}, pages 83--158. Springer, Berlin, 2002.

\bibitem{CS79}
James~B. Carrell and Andrew~J. Sommese.
\newblock Some topological aspects of $\mathbb{C}^*$-actions on compact kaehler
  manifolds.
\newblock {\em Commentarii mathematici Helvetici}, 54:567--582, 1979.

\bibitem{fra1}
Alberto Franceschini.
\newblock Minimal bandwidth $\mathbb{C}^*$-actions on generalized
  {G}rassmannians.
\newblock Preprint ArXiv:{\tt 2012.00498}, 2020.

\bibitem{HK}
Friedrich Hirzebruch and Kunihiko Kodaira.
\newblock On the complex projective spaces.
\newblock {\em J. Math. Pures Appl. (9)}, 36:201--216, 1957.

\bibitem{HuKeel}
Yi~Hu and Sean Keel.
\newblock Mori dream spaces and {GIT}.
\newblock {\em Michigan Math. J.}, 48:331--348, 2000.
\newblock Dedicated to William Fulton on the occasion of his 60th birthday.

\bibitem{Hui}
Jack Huizenga.
\newblock Birational geometry of moduli spaces of sheaves and {B}ridgeland
  stability.
\newblock In {\em Surveys on recent developments in algebraic geometry},
  volume~95 of {\em Proc. Sympos. Pure Math.}, pages 101--148. Amer. Math.
  Soc., Providence, RI, 2017.

\bibitem{IVERSEN}
Birger Iversen.
\newblock A fixed point formula for action of tori on algebraic varieties.
\newblock {\em Inv. Math.}, 16(3):229--236, 1972.

\bibitem{KKLV}
Friedrich Knop, Hanspeter Kraft, Domingo Luna, and Thierry Vust.
\newblock {\em Local Properties of Algebraic Group Actions}, pages 63--75.
\newblock Birkh{\"a}user Basel, Basel, 1989.

\bibitem{LMR}
Antonio Laface, Alex Massarenti, and Rick Rischter.
\newblock On {M}ori chamber and stable base locus decompositions.
\newblock {\em Trans. Amer. Math. Soc.}, 373(3):1667--1700, 2020.

\bibitem{L2}
Robert Lazarsfeld.
\newblock {\em Positivity in algebraic geometry. {I}}, volume~48 of {\em
  Ergebnisse der Mathematik und ihrer Grenzgebiete. 3. Folge. A Series of
  Modern Surveys in Mathematics [Results in Mathematics and Related Areas. 3rd
  Series. A Series of Modern Surveys in Mathematics]}.
\newblock Springer-Verlag, Berlin, 2004.

\bibitem{MMW}
Mateusz Micha{\l}ek, Leonid Monin, and Jaros{\l}aw~A. Wi\'sniewski.
\newblock Maximum likelihood degree and space of orbits of a
  $\mathbb{C}^*$-action.
\newblock {\em SIAM J. Appl. Algebra Geometry}, 1(5):60--85, 2021.

\bibitem{Morelli}
Robert {Morelli}.
\newblock {The birational geometry of toric varieties.}
\newblock {\em {J. Algebr. Geom.}}, 5(4):751--782, 1996.

\bibitem{MFK}
David Mumford, John Fogarty, and France Kirwan.
\newblock {\em Geometric invariant theory}, volume~34 of {\em Ergebnisse der
  Mathematik und ihrer Grenzgebiete (2) [Results in Mathematics and Related
  Areas (2)]}.
\newblock Springer-Verlag, Berlin, third edition, 1994.

\bibitem{WORS1}
Gianluca Occhetta, Eleonora~A. Romano, Luis E.~Sol\'a Conde, and Jaros{\l}aw~A.
  Wi\'sniewski.
\newblock Small bandwidth $\mathbb{C}^*$-actions and birational geometry.
\newblock Preprint ArXiv:1911.12129, 2019.

\bibitem{WORS2}
Gianluca Occhetta, Luis~E. Sol{\'a}~Conde, Eleonora~A. Romano, and Jaros\l
  aw~A. Wi\'sniewski.
\newblock High rank torus actions on contact manifolds.
\newblock {\em Selecta Math.}, 27(10), 2021.

\bibitem{OSS}
Christian Okonek, Michael Schneider, and Heinz Spindler.
\newblock {\em Vector bundles on complex projective spaces}.
\newblock Progress in Mathematics, 3. Birkh\"auser, Boston, Mass., 1980.

\bibitem{Pas}
Boris Pasquier.
\newblock On some smooth projective two-orbit varieties with {P}icard number 1.
\newblock {\em Math. Ann.}, 344(4):963--987, 2009.

\bibitem{ReidFlip}
Miles Reid.
\newblock What is a flip? %\url
  {http://homepages.warwick.ac.uk/staff/Miles.Reid/3folds}, 1992.

\bibitem{RW}
Eleonora~A. Romano and Jaros{\l}aw~A. Wi{\'s}niewski.
\newblock Adjunction for varieties with a $\mathbb{C}^*$ action.
\newblock {\em Transf. Groups}, 2020.
\newblock https://doi.org/10.1007/s00031-020-09627-8.

\bibitem{Thaddeus1996}
Michael Thaddeus.
\newblock Geometric invariant theory and flips.
\newblock {\em J. Amer. Math. Soc.}, 9(3):691--723, 1996.

\bibitem{Wlodarczyk}
Jaros{\l}aw {W{\l}odarczyk}.
\newblock {Birational cobordisms and factorization of birational maps.}
\newblock {\em {J. Algebr. Geom.}}, 9(3):425--449, 2000.

\bibitem{Wlodarczyk2009}
Jaros{\l}aw W{\l}odarczyk.
\newblock Simple constructive weak factorization.
\newblock In {\em Algebraic geometry---{S}eattle 2005. {P}art 2}, volume~80 of
  {\em Proc. Sympos. Pure Math.}, pages 957--1004. Amer. Math. Soc.,
  Providence, RI, 2009.

\end{thebibliography}
\end{document}